\newcommand{\cross}{\times}
\newcommand{\secref}[1]{Section~\ref{#1}}
\newcommand{\thmref}[1]{Theorem~\ref{#1}}
\newcommand{\thmrefb}[1]{{\bf{Theorem}}~\ref{#1}}
\newcommand{\lemref}[1]{Lemma~\ref{#1}}
\newcommand{\remref}[1]{Remark~\ref{#1}}
\newcommand{\propref}[1]{Proposition~\ref{#1}}
\newcommand{\corref}[1]{Corollary~\ref{#1}}
\def\imod#1{\allowbreak\mkern10mu({\operator@font mod}\,\,#1)}
\newtheorem{theorem}{Theorem}[subsection]
\newtheorem{lemma}[theorem]{Lemma}
\newtheorem{corollary}[theorem]{Corollary}
\newtheorem{proposition}[theorem]{Proposition}
\newtheorem*{theorem*}{Theorem}
\theoremstyle{definition}
\newtheorem{remark}[theorem]{Remark}
\newtheorem{definition}[theorem]{Definition}
\numberwithin{equation}{section}
\newcommand{\ignore}[1]{}
\newcommand{\nc}{\newcommand}
\nc{\Cal}{\cal} \nc{\Xp}[1]{X^+(#1)} \nc{\Xm}[1]{X^-(#1)}
\nc{\on}{\operatorname} \nc{\Z}{{\mathbb{Z}}} \nc{\J}{{\cal J}} \nc{\C}{{\mathbb{C}}} \nc{\Q}{{\bold Q}} \nc{\R}{{\mathbb{R}}}
 \nc{\K}{\bold{\kappa}}
\nc{\F}{\operatorname{\boF}}
\nc{\pr}{\operatorname{\boldsymbol{pr}}}
\nc{\ev}{\operatorname{{ev}}}
\nc{\ch}{\operatorname{{ch}}}
\nc{\hook}{\lceil} 
\nc{\HLn}{\noindent\makebox[\linewidth]{\rule{\textwidth}{1pt}}}
\nc\bxi{{\mbox{$\boldsymbol{\xi}$}}}
\nc\Blambda{{\mbox{$\boldsymbol{\lambda}$}}}
\nc{\N}{{\Bbb N}} \nc\boa{\bold a} \nc\bob{\bold b} \nc\boc{\bold c} \nc\bod{\bold d} \nc\boe{\bold e} \nc\bof{\bold f} \nc\bog{\bold g}
\nc\boh{\bold h} \nc\boi{\bold i} \nc\boj{\bold j} \nc\bok{\bold k} \nc
\nc\bom{\bold m} \nc\bon{\bold n} \nc\boo{\bold o}
\nc\bop{\bold p} \nc\boq{\bold q} \nc\bor{\bold r} \nc\bos{\bold s} \nc\boT{\bold t} \nc\boF{\bold F} \nc\bou{\bold u} \nc\bov{\bold v}
\nc\bow{\bold w} \nc\boz{\bold z} \nc\boy{\bold y} \nc\ba{\bold A} \nc\bb{\bold B} \nc\bc{\bold C} \nc\bd{\bold D} \nc\be{\bold E} \nc\bg{\bold
G} \nc\bh{\bold H} \nc\bi{\bold I} \nc\bj{\bold J} \nc\bk{\bold K} \nc\bl{\bold L} \nc\bM{\bold M} \nc\bn{\bold N} \nc\bo{\bold O} \nc\bp{\bold
P} \nc\bq{\bold Q} \nc\br{\bold R} \nc\bs{\bold S} \nc\bt{\bold T} \nc\bu{\bold U} \nc\bv{\bold V} \nc\bw{\bold W} \nc\bz{\bold Z} \nc\bx{\bold
x} \nc\KR{\bold{KR}} \nc\rk{\bold{rk}} \nc\het{\text{ht }}
\nc\udi{\underline i} \nc\udj{\underline j}
\nc\fn{{fin}}  \nc\af{{aff}}  \nc\tr{{tor}} \nc\btilde{\bold{\tilde{\bold{H}}}}
\nc{\mpp}{\rotatebox[origin=c]{180}{\pm}}
\nc\eps{\epsilon}
\nc\toa{\tilde a} \nc\tob{\tilde b} \nc\toc{\tilde c} \nc\tod{\tilde d} \nc\toe{\tilde e} \nc\tof{\tilde f} \nc\tog{\tilde g} \nc\toh{\tilde h}
\nc\toi{\tilde i} \nc\toj{\tilde j} \nc\tok{\tilde k} \nc\tol{\tilde l} \nc\tom{\tilde m}  \nc\ton{\tilde n} \nc\too{\tilde o} \nc\toq{\tilde q}
\nc\tor{\tilde r} \nc\tos{\tilde s} \nc\toT{\tilde t} \nc\tou{\tilde u} \nc\tov{\tilde v} \nc\tow{\tilde w} \nc\toz{\tilde z}
\begin{document}
	\setcounter{section}{0}
	\setcounter{tocdepth}{1}
	
	\title[Filtration of tensor product of local Weyl modules by fusion modules]{Filtration of tensor product of local Weyl modules for $\mathfrak{sl}_{n+1}[t]$}
	\author[Divya Setia]{Divya Setia}
     \author{Shushma Rani}
	\author{Tanusree Khandai}
	\address{Indian Institute of Science Education and Research Mohali, Knowledge City,  Sector 81, S.A.S. Nagar 140306, Punjab, India}
	\email{divyasetia01@gmail.com}
	\email{tanusree@iisermohali.ac.in}
 \address{Harish Chandra Research Institute, A CI of Homi Bhaba National Institute, Chhatnag Road, Jhunsi, Allahabad, Uttar Pradesh, 211019, India,}
 \email{shushmarani@hri.res.in, shushmarani95@gmail.com}
\date{}
\thanks{}


%
%
%
%
%
%
%
\begin{abstract} 
In this paper, we consider the tensor product of local Weyl modules for $\mathfrak{sl}_{n+1}[t]$ whose highest weights are multiples of the first and $n^{th}$ fundamental weights. We determine the graded character of these tensor product modules in terms of the graded character of local Weyl modules and prove that these modules admit a filtration whose successive quotients are either truncated Weyl modules or fusion products of Demazure modules. Furthermore, we establish that the truncated Weyl modules appearing as quotients in the filtration of tensor products of local Weyl modules of $\mathfrak{sl}_3[t]$ are indeed isomorphic to fusion products of irreducible $\mathfrak{sl}_3[t]$-modules which establish the independence of a family of fusion product modules of $\mathfrak{sl}_3[t]$ from the set of its evaluation parameters.
\end{abstract}

 \maketitle
%
%
%
%
%
%
%

\section{INTRODUCTION}
Let $\mathfrak{g}$ be a finite-dimensional simple Lie algebra over the complex field $\mathbb{C}$, and let $\mathfrak{g}[t]$ be the associated current Lie algebra. In 2001, Chari and Pressley introduced a family of finite-dimensional graded modules for current Lie algebras known as the local Weyl modules (\cite{MR1850556}). Defined through generators and relations, these modules exhibit certain universal properties. It was shown in \cite{MR3371494} that the graded characters of local Weyl modules of current Lie algebras  coincide with non-symmetric Macdonald polynomials, specialized at $t = 0$. Using this connection between Macdonald polynomials and the representation theory of current Lie algebras of type $A$, we study the structure of tensor products of two local Weyl modules for $\mathfrak{sl}_{n+1}[t]$  whose highest weights are multiples of the first or $n^{th}$ fundamental weight.


Let $\mathfrak{\hat{g}}$ be an affine Kac-Moody Lie algebra. A Demazure module is a finite-dimensional representation of a Borel subalgebra of $\mathfrak{\hat{g}}$. It is generated by an extremal vector of a highest weight integrable $\mathfrak{\hat{g}}$-module, and its level is determined by the integer by which the central element of $\mathfrak{\hat{g}}$ acts on it. Demazure modules invariant under the action of the maximal parabolic subalgebra $\mathfrak{g}[t]$ of $\mathfrak{\hat{g}}$ are said to be $\mathfrak{g}[t]$-stable. 
It was demonstrated in \cite{MR1771615} for $\mathfrak{\hat{g}}$ of type $A_n^1$ and in \cite{MR1953294} for simply-laced untwisted affine Lie algebras and twisted affine Lie algebras, that the graded characters of $\mathfrak{g}[t]$-stable Demazure modules are given by specialized Macdonald polynomials. This played an important role in establishing that a local Weyl module for $\mathfrak{sl}_{n+1}[t]$ is indeed a level 1 Demazure module. Alternatively, using dimension arguments this result was proved in \cite{MR2323538} for current Lie algebras associated with simply-laced simple Lie algebras.

The notion of studying the structure of a $\mathfrak{\hat{g}}$-module using Demazure flags was introduced and developed in \cite{MR2214249} and the concept further explored in \cite{MR2855081} to understand the structure of local Weyl modules for current algebras associated with nonsimply-laced simple Lie algebras. This method was used in \cite{MR3210603} to obtain the graded characters of level 1 Demazure modules associated with the affine Kac-Moody Lie algebra of type $B_n^1$. Motivated by the research initiated in \cite{9} to examine the Demazure filtration of tensor products of stable Demazure modules for $\mathfrak{sl}_2[t]$,
 in \cite{24}, we investigated the structure of two distinct classes of tensor products of Demazure modules for $\mathfrak{sl}_{2}[t]$. This led us to prove results that support the conjecture in \cite{9} regarding the filtration of tensor products of Demazure modules of levels $m$ and $n$ by Demazure modules of level $m + n$. In  this paper, we explore  whether analogous results hold for tensor products of local Weyl modules  of $\mathfrak{sl}_{n+1}[t]$.

To understand the characters of level two Demazure modules, a family of finite-dimensional $\mathfrak{sl}_{n+1}[t]$-modules $M(\nu,\lambda)$, indexed by two dominant integral weights $\nu$ and $\lambda$, was introduced in \cite{23} and extensively studied in \cite{MR4224104}. These modules interpolate between level 1 and level 2 Demazure modules, and it has been demonstrated that their graded characters are symmetric polynomials in $n+1$ variables under suitable conditions on $(\nu,\lambda)$. Furthermore, it was established that for $n\leq 3$, $M(\nu,\lambda)$ has a filtration by level 2 Demazure modules. 
  Let $W_{\text(loc)}(\lambda)$ denote the local Weyl module of $\mathfrak{sl}_{n+1}[t]$ with highest weight $\lambda$. Denoting the $i^{th}$ fundamental weight of $\mathfrak{sl}_{n+1}$ by $\omega_i$, we establish that the $\mathfrak{sl}_{n+1}[t]$ modules of the form $W_{loc}(m\omega_i)\otimes W_{loc}(n\omega_i)$, for $i\in \{1,n\}$, possess a filtration by $M(\nu,\lambda)$-modules. This 
shows that for $n\leq 3$, such tensor product modules have a filtration by level 2 Demazure modules.

With the aim of constructing generalized versions of the Kostka polynomials, Feigin and Loktev introduced in \cite{MR1729359} a family of $\mathbb{Z}$-graded modules for current Lie algebras called fusion product modules.  The underlying vector space for these modules comprises tensor products of highest weight cyclic $\mathfrak{g}[t]$-modules, and their definition incorporates evaluation maps based on a finite subset of the complex field $\mathbb{C}$. It was conjectured that the structure of these modules remains unchanged regardless of the associated set of evaluation parameters. Over the past two decades, this conjecture has been confirmed in several instances showcasing that fusion product modules encompass various representations of current Lie algebras, including local Weyl Modules, $\mathfrak{g}[t]$-stable Demazure modules, generalized Demazure modules and the $M(\nu,\lambda)$-modules. These results have been proven across a range of papers, including \cite{MR3298990, MR3608149, MR3296163, MR2271991, MR1729359, MR2323538, MR3704743}.

 An important tool that has consistently helped in verifying the Feigin-Loktev conjecture is the use of Chari-Venkatesh modules, also known as CV-modules (\cite{MR3296163}).  To study of the $\mathfrak{sl}_3[t]$ modules $W_{loc}(m\omega_1)\otimes W_{loc}(n\omega_2)$, we introduce the family $M_j(\lambda_1,\lambda_2,\lambda_3)$-modules, which are indexed by an integer $j$ and a triple of dominant integral weights $(\lambda_1,\lambda_2,\lambda_3)$. For $j=0$, these modules interpolate between $M(\nu,\lambda)$-modules and a class of cyclic $\mathfrak{sl}_3[t]$ modules studied in \cite{MR3704743}. Denoting the fusion product of modules $U$ and $V$ based at $\{a_1,a_2\}$ by $U^{a_1}\ast V^{a_2}$,  the adjoint representation of $\mathfrak{sl}_3[t]$ by $V(\theta)$ and level $\ell$ Demazure module with highest weight $\ell\lambda_\ell$ by $D(\ell,\ell\lambda_\ell)$,   using properties of CV-modules we prove  that the $\mathfrak{sl}_3[t]$ module $M_j(\lambda_1,\lambda_2,\lambda_3)$ is isomorphic to the fusion product module 
$$D(3,3\lambda_3)^{a_1}\ast D(2,2\lambda_2)^{a_2}\ast D(1,\lambda_1)^{a_3}*
V(\theta)^{a_4}\ast \cdots \ast V(\theta)^{a_{j+3}},$$ when 
$\lambda_3=0$ or $\lambda_1\in \{\omega_i:i=1,2\}\cup \{0\}$. This proves the independence of this class of fusion module on the set of evaluation parameters $\{a_1,a_2,\cdots,a_{j+3}\}$. As an offshoot we are able to show that the truncated local Weyl module $W_{m_1+m_2+k}(m_1\omega_1+m_2\omega_2+k\theta)$ which is isomorphic to $M_k(m_1\omega_1+m_2\omega_2,0,0)$, 
is isomorphic to a fusion product of $k$-irreducible $\mathfrak{sl}_3[t]$-modules. This generalizes a similar result that was proved in \cite{MR3298990}.
This also helps us prove that the $\mathfrak{sl}_3[t]$ tensor product module  $W_{loc}(m\omega_1)\otimes W_{loc}(n\omega_2)$ admits a filtration by submodules whose successive quotients are isomorphic to truncated Weyl modules $W_{M-j}(m\omega_1+k\omega_2-j\theta)$, where $0\leq j\leq \min\{m,k\}$ and $M=\max\{m,k\}$.  

To establish the existence of filtrations for the tensor product modules of $\mathfrak{sl}_{n+1}[t]$, we first compute the graded characters of the modules using Pieri formulas. Subsequently, using an understanding of the Chari-Loktev basis of local Weyl modules and the graded character formulas, we show that the tensor product modules in question possess a filtrations whose successive quotients are either truncated Weyl modules or fusion product of Demazure modules. The parametrization of the Chari-Loktev basis by the partition overlaid patterns determined in \cite{pop}, facilitates the adoption of a systematic approach to study the problem.

This paper is organized as follows. In section $2$, we set the notations and recall the basic definitions that are required in the paper. In Section 3 we recall the results from \cite{MR2271991,pop} on the Chari-Loktev basis of local Weyl modules for $\mathfrak{sl}_{n+1}[t]$. 
In Section 4, we recall the definition and properties of CV modules from \cite{MR3296163} and determine suitable generators for the tensor products of local Weyl modules of $\mathfrak{sl}_{n+1}[t]$. Using Pieri formulas, we determine in section 5 the graded character formulas of the tensor products of two local Weyl modules and express the product of two specialized Macdonald polynomials in terms of specialized Macdonald polynomials.  We use these in sections 6 and 7 respectively to show that $\mathfrak{sl}_3[t]$-modules of the form
 $W_{loc}(m\omega_{1}) \otimes W_{loc}(l\omega_{2})$ have filtration by truncated Weyl modules and $\mathfrak{sl}_{n+1}[t]$ modules of the form  $W_{loc}(m\omega_{i}) \otimes W_{loc}(l\omega_{i})$, for $i\in \{1,n\}$ have filtrations by $M(\nu,\lambda)$-modules. Finally in section 8, we study the CV-modules $M_j(\lambda_1,\lambda_2,\lambda_3)$ which are indexed by special sets of triples of 
dominant integral weights $(\lambda_1,\lambda_2,\lambda_3)$ of $\mathfrak{sl}_3(\mathbb C)$ and $j\in \mathbb Z_+$ and complete the proof of the main theorems. 

%
%
%
%
%
%
%
%
\section{PRELIMINARIES}
Throughout this paper, $\mathbb C$ will denote the field of complex numbers and $\mathbb Z$ the set of integers, $\mathbb Z_+$ (resp. $\mathbb N$)  the set of non-negative integers (resp. positive integers). For $n,r \in \mathbb Z_+$, set 
$$ [n]_q= \frac{1-q^n}{1-q}, \quad \begin{bmatrix}
    n\\ r
\end{bmatrix}_q= \frac{[n]_q [n-1]_q \ldots [n-r+1]_q}{[r]_q[r-1]_q \ldots [1]_q}$$
$$\begin{bmatrix}
    n\\ 0
\end{bmatrix}_q=1, \quad \begin{bmatrix}
    n\\ 1
\end{bmatrix}_q= \frac{1-q^n}{1-q}, \quad \begin{bmatrix}
    n\\ r
\end{bmatrix}_q=0, \quad \text{unless } \{n,r,n-r\} \subset \mathbb Z_+.$$

\subsection{} Let $\mathfrak g= \mathfrak{sl}_{n+1}$ be the Lie algebra of $(n+1)\times(n+1)$ matrices with trace zero and $\mathfrak h$ be a Cartan subalgebra of $\mathfrak g$. Fix an indexing set $I= \{1, 2, \cdots, n\}$. Let $\Delta= \{\alpha_i: i \in I\}$ be a set of simple roots, $\theta= \sum\limits_{i \in I} \alpha_i$ be the highest root of $\mathfrak g $, $\{\omega_i: i \in I\} $ be the set of the fundamental weights and $P^+= \sum\limits_{i \in I} \mathbb Z_+ \omega_i$ ($Q^+= \sum\limits_{i \in I} \mathbb Z_+ \alpha_i)$ be the dominant weight lattice (resp. the positive root lattice).  Let $\Phi$ denote the set of roots of $\mathfrak g$ and $\Phi^+ = \Phi\cap Q^+$ be the set of positive roots. Fix Chevalley generators $\{x_\alpha, y_\alpha, h_\alpha: \alpha \in \Phi^+\}$ of $\mathfrak g$ such that $$ [h_\alpha, x_\alpha]= 2 x_\alpha, \quad  [h_\alpha, y_\alpha]= -2 y_\alpha, \quad  [x_\alpha, y_\alpha]=  h_\alpha. $$

With $\mathfrak n^+ = \bigoplus\limits_{\alpha\in \Phi^+} \mathbb C x_\alpha $ and $\mathfrak n^- = \bigoplus\limits_{\alpha\in \Phi^+} \mathbb C y_\alpha $, $\mathfrak g$ has the following triangular decomposition: $\mathfrak g = \mathfrak n^-\oplus \mathfrak h\oplus \mathfrak n^+$. For $\lambda \in P^+,$ let $V(\lambda)$ be the irreducible, cyclic $\mathfrak g$-module generated by $v_\lambda$ with defining relations:
\begin{equation}\label{irr.rel}
\mathfrak n^+.v_\lambda =0, \hspace{.25cm} h.v_\lambda=\lambda(h)v_\lambda, \hspace{.25cm} (y_{\alpha_i})^{\lambda(h_{\alpha_i})+1}v_\lambda =0, \quad \quad \forall \,\,  h\in \mathfrak h \, \text{and} \, i \in I. \end{equation}
Let $W$ be the Weyl group of $\mathfrak g$ and $w_0$ the longest element of $W$. Then for all $\lambda\in P^+$, the module $V(-w_0\lambda)$ is the $\mathfrak g$-dual of $V (\lambda)$. 
\\

Let $\mathbb {Z}[P]$ be the group ring of $P$ with integer coefficients and basis $\{e(\mu): \mu \in P\}$. The character of the $\mathfrak g$-module $V(\lambda)$ is the element of $\mathbb {Z}[P]$ given by:
$$\ch_{\mathfrak{g}}V(\lambda) = \sum_{\mu \in P}\dim V(\lambda)_{\mu} e(\mu),\quad \quad \text{where} \, \, V(\lambda)_\mu = \{ v\in V(\lambda) : hv=\mu(h)v\, \forall h\in \mathfrak h\}.$$

\subsection{} Let $\mathfrak g[t]$ be the current Lie algebra of $\mathfrak g$. As a vector space $\mathfrak g[t]:= \mathfrak{g} \otimes \mathbb {C}[t]$, where $\mathbb {C}[t]$ denotes the polynomial ring in indeterminate $t$. With Lie  bracket operation defined by $$ \quad  [a\otimes f, b\otimes g]=[a,b]\otimes fg, \quad \quad \forall \, a, b \in \mathfrak{g}, \, \, f,g \in \C[t],$$ 
$\mathfrak g[t]$ is a $\mathbb Z_+$-graded Lie algebra, where the $\mathbb Z_+$ grading is determined by the degree of the polynomials in $\C[t]$. 
Let $\bold U(\mathfrak g[t])$ be the universal enveloping algebra of $\mathfrak{g}[t]$. With the $\mathbb Z_+$-grading inherited from $ \mathfrak{g}[t]$, $\bold U(\mathfrak g[t])$ is $\mathbb Z_+$-graded. With respect to this grading, we say $(x_{\alpha_1}\otimes t^{r_1})(x_{\alpha_2}\otimes t^{r_2})\cdots(x_p\otimes t^{r_p}) \in \bold U(\mathfrak{g}[t])$ has grade $r_1+r_2+\cdots+r_p$.
For a positive integer $s$, we denote by $\bold{U}(\mathfrak{g}[t])[s]$ the subspace of $\bold{U}(\mathfrak{g}[t])$ spanned by the $s$-graded elements.

\subsection{} A graded representation of $\mathfrak g[t]$ is a $\mathbb Z_+$-graded vector space $V= \underset{r\in \mathbb Z_+}{\bigoplus} V[r]$ such that $$ \bu(\mathfrak g[t])[s].V[r]\subseteq V[r+s], \, \, \forall\, r,s\in \mathbb Z_+.$$ If $U, V$ are two graded representations of $\mathfrak g[t]$, we say $\psi: U\rightarrow V$ is a  graded $\mathfrak g[t]$-module morphism if  $\psi(U[r])\subseteq V[r]$ for all $r\in \mathbb Z_+$. For $s\in \mathbb  Z$, let $\tau_s$ be the grade-shifting operator given by 
$$\tau_s(V)[k] = V[k-s] \quad \quad \forall\, \, k\in \mathbb Z_+,$$ for a  graded representation $V$ of $\mathfrak g[t]$. Given  a $\mathfrak g$-module $U$ and  $z\in \mathbb C$, we can regard $U$ as a $\mathfrak g[t]$ module via the map $\ev_z: \mathfrak g[t]\rightarrow \mathfrak g$ given by $x\otimes t^r\mapsto z^r x$. Let  $\ev_z U$ denote the corresponding evaluation module for $\mathfrak g[t]$. Clearly, for $z=0, \ev_0(U)$ is a graded $\mathfrak g[t]$-module with $\ev_0 U[0]=U$ and $\ev_0 U[r] =0$ for $r>0$.

Let $q$ be an indeterminate. Since in a graded $\mathfrak g[t]$-module $V= \underset{r\in \mathbb Z_+}{\bigoplus} V[r]$, each graded piece $V[r]$, is a $\mathfrak g$-module, the graded character of a module $V$ is given by :
$$ \ch_{gr} V = \sum\limits_{r \geq 0} q^r \ch_{\mathfrak g} V[r] .$$

\subsection{}\label{tensor section} If $V, W$ are two graded representations of $\mathfrak g[t]$, then $V \otimes W$ is a graded $\mathfrak g[t]$-module with the action defined as follows:
$$(X \otimes t^r)(v \otimes w)= (X \otimes t^r)(v) \otimes w + v \otimes (X \otimes t^r)w, \quad \forall \quad X \in \mathfrak g, v \in V, w \in W. $$ Given $k \in \mathbb Z_+$, set $$(V \otimes W)[k]= \bigoplus\limits_{i \in \mathbb Z_+} V[i] \otimes W[k-i],$$ with the assumption that $W[j] =0$ if $j<0$.

\subsection{} For $\lambda \in P^+$, the local Weyl module $W_{loc}(\lambda)$ is defined as  the  $\mathfrak{g}[t]$-module generated by non-zero vector $w_{\lambda}$ which satisfies the following conditions: 
\begin{equation}\label{loc.Weyl.2.5}
(\mathfrak n^+ \otimes \C[t])w_\lambda= 0,\quad \,(h_{\alpha_i}\otimes t^s)w_\lambda= \lambda(h_{\alpha_i})\delta_{s,0}w_\lambda, \quad \,
(y_{\alpha_i}\otimes 1)^{\lambda(h_{\alpha_i})+1}w_\lambda =0,
\end{equation} 
for all $i \in I$ and $s \in \mathbb Z_+$. Clearly,  $W_{loc}(\lambda)$ is a graded $\mathfrak{g}[t]$ module, and every finite-dimensional cyclic module with the highest weight $\lambda$ is a quotient of $W_{loc}(\lambda)$. These modules were introduced in \cite{MR1850556} in the context of quantum affine algebras.

\subsection{} Given a finite-dimensional representation $V$ of $\mathfrak g[t]$ and $z\in \mathbb C$, let $V^z$ denote the $\mathfrak g[t]$-module on which the action of $\mathfrak g[t]$ on $V$ is given as follows : 
$$x\otimes t^s .v = (x\otimes (t+z)^s). v, \qquad v\in V, \, s\in \mathbb Z_+.$$ 
Given $s$ graded finite-dimensional cyclic $\mathfrak g[t]$-modules
$V_1,\cdots, V_s$ with generators $v_1,\cdots, v_s$ and $\bold z= (z_1,\cdots, z_s),$ a $s$-tuple of distinct complex numbers, let $$V(\bold{z}) = V_1^{z_1}
\otimes \cdots \otimes V_s^{z_s}.$$ Clearly, $V(\bold{z})$ is a graded cyclic $\mathfrak g[t]$-module generated by $v_1\otimes\cdots\otimes v_s.$ The $\mathbb N$-grading in $\bold{U}(\mathfrak g[t])$ induces a $\mathfrak g$-equivariant grading on $V(\bold z)$ given by 
$$V(\bold z){[k]} = \underset{0\leq r\leq k}{\bigoplus} \bold{U}(\mathfrak g[t])[r]. v_1\otimes\cdots\otimes v_s.$$  The associated graded  $\mathfrak g[t]$-module 
$$ V_1^{z_1}\ast \cdots \ast V_s^{z_s} := V(\bold z){[0]} \oplus \underset{k\in \mathbb N}{\bigoplus} \dfrac{\bold{V}(\bold z){[k]}}{\bold{V}(\bold z){[k-1]}} $$ is called the {\it{fusion product}} of $V_1,\cdots, V_s$ at $\bold{z}$. These modules were introduced in \cite{MR1729359}. It was conjectured that they are independent of the parameters of evaluation $z_1,\cdots,z_s$. 
In this context, the following lemma has proved useful.
\begin{lemma}\label{fusion_def} Suppose that $\bold {V}(\bold\lambda,\boz):=V(\lambda_1)^{z_1}\otimes \cdots \otimes V(\lambda_s)^{z_s}$ is an irreducible $\mathfrak g[t]$-module. 
If for $v\in \bold {V}(\bold\lambda,\boz)$,  $\bar{v}$ denotes its image in $ V(\lambda_1)^{z_1}\ast \cdots \ast V(\lambda_s)^{z_s}$  then
$$ x\otimes t^p.\bar{v} = x\otimes (t-a_1)\cdots(t-a_p). {\bar{v}}, \hspace{.25cm} \forall\ x\in \mathfrak g, \ \text{and}\ 
a_1,\cdots, a_p\in \mathbb C.$$ 
\end{lemma}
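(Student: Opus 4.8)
The plan is to reduce the assertion to the definition of the induced $\mathfrak g[t]$-action on the associated graded module, together with the trivial observation that, modulo lower filtration degree, the element $x\otimes t^p$ and any element $x\otimes g(t)$ with $g$ monic of degree $p$ act in the same way. The only use of the irreducibility hypothesis is to guarantee that $\bold V(\bold\lambda,\boz)$ is cyclic on $v_1\otimes\cdots\otimes v_s$; consequently the filtration $\bold V(\bold\lambda,\boz)[k]=\sum_{0\le r\le k}\bold U(\mathfrak g[t])[r].(v_1\otimes\cdots\otimes v_s)$ is exhaustive and the fusion product $V(\lambda_1)^{z_1}\ast\cdots\ast V(\lambda_s)^{z_s}$ is precisely $\mathrm{gr}$ of $\bold V(\bold\lambda,\boz)$ for this filtration (rather than of some proper submodule). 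Recall that the induced action is the graded one: for homogeneous $X\in\bold U(\mathfrak g[t])[m]$ and $w\in\bold V(\bold\lambda,\boz)[k]$, the element $X$ sends the class of $w$ in $\bold V(\bold\lambda,\boz)[k]/\bold V(\bold\lambda,\boz)[k-1]$ to the class of $X.w$ in $\bold V(\bold\lambda,\boz)[k+m]/\bold V(\bold\lambda,\boz)[k+m-1]$; so for $v\in\bold V(\bold\lambda,\boz)$, writing $\bar v$ for its image, the right-hand side of the claimed identity is to be read as the image in the fusion product of the vector $(x\otimes(t-a_1)\cdots(t-a_p)).v$.

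With this in place, I would expand $(t-a_1)\cdots(t-a_p)=t^p+\sum_{j=0}^{p-1}c_j t^j$, so that $x\otimes(t-a_1)\cdots(t-a_p)=(x\otimes t^p)+\sum_{j=0}^{p-1}c_j(x\otimes t^j)$ in $\mathfrak g[t]$. Fix $k$ with $v\in\bold V(\bold\lambda,\boz)[k]$. Since $x\otimes t^j\in\bold U(\mathfrak g[t])[j]$ and the filtration is compatible with the grading of $\bold U(\mathfrak g[t])$, we get $(x\otimes t^j).v\in\bold V(\bold\lambda,\boz)[k+j]\subseteq\bold V(\bold\lambda,\boz)[k+p-1]$ for every $j\le p-1$; hence $(x\otimes(t-a_1)\cdots(t-a_p)).v$ and $(x\otimes t^p).v$ differ by an element of $\bold V(\bold\lambda,\boz)[k+p-1]$. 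Therefore their classes in $\bold V(\bold\lambda,\boz)[k+p]/\bold V(\bold\lambda,\boz)[k+p-1]$ agree, and by the description of the graded action this common class equals $(x\otimes t^p).\bar v$. Since the only feature of $(t-a_1)\cdots(t-a_p)$ used is that it is monic of degree $p$, the identity holds for all $a_1,\dots,a_p\in\mathbb C$, and this freedom in the choice of the $a_i$ is exactly what makes the lemma a useful device later on.

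I do not expect any substantial obstacle: the content is a filtration-degree count. The only points deserving care are notational --- pinning down the meaning of ``image of $v$'' and of the induced $\mathfrak g[t]$-action on $\mathrm{gr}\,\bold V(\bold\lambda,\boz)$, and verifying that $x\otimes t^p$ shifts filtration degree by exactly $p$ so that every term $c_j(x\otimes t^j).v$ lands in filtration degree $k+p-1$ --- and the recognition that irreducibility is used only to ensure the fusion product is the associated graded of all of $\bold V(\bold\lambda,\boz)$.
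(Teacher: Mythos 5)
Your proof is correct. The paper states this lemma without proof, so there is no argument to compare against; yours is the natural one, namely that $(t-a_1)\cdots(t-a_p)$ and $t^p$ differ by a polynomial of degree at most $p-1$, that $x\otimes t^j\in\bold U(\mathfrak g[t])[j]$ raises filtration degree by exactly $j$, and hence that $(x\otimes(t-a_1)\cdots(t-a_p)).v$ and $(x\otimes t^p).v$ have the same class in $\bold V(\bold\lambda,\boz)[k+p]/\bold V(\bold\lambda,\boz)[k+p-1]$, which is by definition $(x\otimes t^p).\bar v$. You have also correctly isolated the role of irreducibility: it guarantees cyclicity on $v_1\otimes\cdots\otimes v_s$, so the filtration used to define the fusion product is exhaustive on all of $\bold V(\bold\lambda,\boz)$ and the symbol map $v\mapsto\bar v$ is defined for every $v$.
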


\subsection{} Given a dominant integral weight $\lambda$ and a positive integer $\ell$, the level $\ell$ Demazure module with highest weight $\lambda$, 
which we denote by $D(\ell, \lambda)$,  is defined as the graded quotient of $W_{loc}(\lambda)$ by the submodule generated by the elements $$\{(y_\alpha\otimes t^p)^{r+1}w_\lambda :p\in \mathbb Z_+, \, r\geq \max\{0, \lambda(h_\alpha)-\ell p\},  \text{ for } \alpha\in \Phi^+\}.$$  It was shown in \cite{MR2271991} for $\mathfrak g$ of type A and in \cite{MR2323538} for $\mathfrak g$ of type ADE, that the $\mathfrak{g}[t]$-module $W_{loc}(\lambda)$ is isomorphic to $D(1,\lambda)$.

%
%
%
%
%
%
%

\section{Chari-Loktev Basis of $W_{loc}(\lambda)$ and POP}

\subsection{} In \cite{MR2271991}, Chari and Loktev introduced a basis for local Weyl modules of the current algebras associated to special linear Lie algebras and proved the following.
\begin{lemma} \label{CL.W(lambda).dim}
  Given  $\mu =\sum\limits_{i=1}^n m_i \omega_i\in P^{+}$,  
    $\dim W_{loc}(\mu ) = \prod\limits_{i=1}^{n} {n+1 \choose i}^{m_{i}}$
\end{lemma}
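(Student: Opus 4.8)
The plan is to sandwich $\dim W_{loc}(\mu)$ between a lower and an upper bound that both equal $\prod_{i=1}^{n}\binom{n+1}{i}^{m_i}$. I would first dispose of the base case $\mu=\omega_i$: every fundamental weight of $\mathfrak{sl}_{n+1}$ is minuscule, so $\omega_i(h_\alpha)\in\{0,1\}$ for all $\alpha\in\Phi^+$, and the Garland-type identities obtained by applying $x_\alpha\otimes t$ and $h_\alpha\otimes t$ to $(y_{\alpha}\otimes1)^{\omega_i(h_\alpha)+1}w_{\omega_i}=0$ (together with $(h_\alpha\otimes t^s)w_{\omega_i}=0$ for $s\ge1$) force $(y_\alpha\otimes t^r)w_{\omega_i}=0$ for all $\alpha\in\Phi^+$ and $r\ge1$. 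Hence $\mathfrak g\otimes t\mathbb C[t]$ kills $w_{\omega_i}$, so $W_{loc}(\omega_i)\cong\ev_0 V(\omega_i)$ and $\dim W_{loc}(\omega_i)=\dim V(\omega_i)=\binom{n+1}{i}$, since $V(\omega_i)\cong\Lambda^i\mathbb C^{n+1}$.

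For the lower bound, write $N=\sum_i m_i$ and let $\lambda_1,\dots,\lambda_N$ be the multiset of fundamental weights in which $\omega_i$ occurs $m_i$ times, so $\sum_j\lambda_j=\mu$. Picking pairwise distinct $z_1,\dots,z_N\in\mathbb C$ and setting $V_j=\ev_0 V(\lambda_j)$, so that $V_j^{z_j}=\ev_{z_j}V(\lambda_j)$, the tensor product $V_1^{z_1}\otimes\cdots\otimes V_N^{z_N}$ is an irreducible $\mathfrak g[t]$-module (tensor products of evaluation modules at pairwise distinct points are irreducible), hence cyclic on $v:=v_{\lambda_1}\otimes\cdots\otimes v_{\lambda_N}$, and we are in the situation of \lemref{fusion_def}. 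I would then check that the image $\bar v$ of $v$ in the fusion product $V_1^{z_1}\ast\cdots\ast V_N^{z_N}$ satisfies the defining relations \eqref{loc.Weyl.2.5} of $W_{loc}(\mu)$: it is killed by $\mathfrak n^+\otimes\mathbb C[t]$; $h_{\alpha_i}$ acts by $\mu(h_{\alpha_i})$; each $h_{\alpha_i}\otimes t^s$ with $s\ge1$ acts by $0$, because $(h_{\alpha_i}\otimes t^s)v$ lies in $V(\bold z)[0]\subseteq V(\bold z)[s-1]$ and so vanishes in the associated graded; and $(y_{\alpha_i}\otimes1)^{\mu(h_{\alpha_i})+1}\bar v=0$ since $v$ is a highest weight vector of weight $\mu(h_{\alpha_i})$ for the diagonal $\langle x_{\alpha_i},y_{\alpha_i},h_{\alpha_i}\rangle$. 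Thus $W_{loc}(\mu)$ surjects onto $V_1^{z_1}\ast\cdots\ast V_N^{z_N}$, which as the associated graded of the (whole) tensor product has dimension $\prod_j\dim V(\lambda_j)=\prod_i\binom{n+1}{i}^{m_i}$; hence $\dim W_{loc}(\mu)\ge\prod_i\binom{n+1}{i}^{m_i}$.

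For the upper bound, \eqref{loc.Weyl.2.5} and PBW give $W_{loc}(\mu)=\bold U(\mathfrak n^-\otimes\mathbb C[t])w_\mu$, so $W_{loc}(\mu)$ is spanned by ordered monomials in the $y_\alpha\otimes t^r$ applied to $w_\mu$, and the task is to cut this spanning set down to $\prod_i\binom{n+1}{i}^{m_i}$ elements. The engine is once more the Garland-type identities: applying $\bold U(\mathfrak g[t])$ to $(y_{\alpha_i}\otimes1)^{m_i+1}w_\mu=0$ and its consequences for all $\alpha\in\Phi^+$ produces relations that rewrite any monomial in which some $y_\alpha\otimes t^r$ occurs with too large an exponent, or with too high a $t$-degree, in terms of monomials of lower order. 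Performing this straightening and indexing the surviving monomials by the partition overlaid patterns of \cite{pop} (equivalently, by the combinatorial data of \cite{MR2271991}), one counts exactly $\prod_i\binom{n+1}{i}^{m_i}$ of them. Combined with the lower bound, this forces $\dim W_{loc}(\mu)=\prod_i\binom{n+1}{i}^{m_i}$ and shows a posteriori that the spanning set is a basis — the Chari--Loktev basis.

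The hard part will be the upper bound. The base case and the fusion-product lower bound are routine, but trimming the PBW spanning set to the exact size demands the full higher Garland identities, a carefully chosen straightening order under which every monomial reduces to a pattern monomial, and a combinatorial check that the number of patterns equals the product — this is the technical core of \cite{MR2271991}, organized systematically via the POP parametrization of \cite{pop}.
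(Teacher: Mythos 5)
Your proposal is correct and reproduces, in outline, exactly the Chari--Loktev strategy from \cite{MR2271991}: identify $W_{loc}(\omega_i)$ with $\ev_0 V(\omega_i)$ for minuscule fundamental weights, get the lower bound by surjecting $W_{loc}(\mu)$ onto the fusion product of evaluation modules at distinct points, and get the matching upper bound by Garland-identity straightening of the $\bold U(\mathfrak n^-[t])$-monomials down to the pattern-indexed spanning set of \cite{MR2271991,pop}. Note that the paper does not prove this lemma at all --- it merely cites \cite{MR2271991} --- so there is no internal argument to compare against; your sketch is consistent with the cited source, with the only informality being that the relation $(y_\alpha\otimes 1)^{\omega_i(h_\alpha)+1}w_{\omega_i}=0$ for non-simple $\alpha$ should be noted as a derived consequence of $W_{loc}(\omega_i)[0]\cong V(\omega_i)$ rather than a defining one.
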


\noindent Subsequently, partition overlaid patterns, POPs for short, were introduced in \cite{pop}, and it was shown that they form a convenient parametrizing set for the Chari-Loktev basis. We now recall the definitions and results in this direction that are revelant for this paper. 

\begin{definition} Given a partition $p_\lambda=(\lambda_1 \geq \lambda_2 \geq \cdots \geq \lambda_n\geq \lambda_{n+1})$, a Gelfand-Tsetlin (GT) pattern with bounding sequence $p_\lambda$  is an array of row vectors  $\lambda= [\lambda^{i}_j]_{1\leq i\leq n+1, 1\leq j\leq i}$ 
    $$\begin{array}{ccccccc}
       & & & \lambda_{1}^1  & & &   \\
       & & \lambda_{1}^2  & &  \lambda_{2}^2 & &\\
       &\cdots&  & \cdots &  & \cdots&\\
       \lambda_{1}^n& & \lambda_{2}^n & &\cdots & &\lambda_{n}^n\\
 \lambda_{1}^{n+1}\, \quad & &  \lambda_{2}^{n+1} & &\cdots  & & \, \, \qquad \lambda_{n+1}^{n+1}
    \end{array}$$
where $\lambda_j^{n+1} = \lambda_j$ for $1\leq j\leq n+1$, that is, the last row coincides with the bounding sequence $p_\lambda$  and  the entries of the $j^{th}$ row $\underline{\lambda}^j =(\lambda_1^j,\cdots,\lambda_j^j)$, for $1\leq j\leq n$, 
 satisfy the condition:
    $$ \lambda_{k}^i-\lambda_{k}^{i-1} \in \mathbb Z_+, \quad \lambda_{k}^{i-1}-\lambda_{k+1}^i \in \mathbb Z_+, \quad 1\leq i\leq n,\, 1\leq k\leq i-1.$$ 
\end{definition}

\noindent For a positive integer $m$, set $[m]=\{1,2,\cdots,m\}$.

\begin{definition} A partition overlaid pattern (POP, for short) consists of an integral GT pattern $\underline{\lambda}^1, ..., \underline{\lambda}^n$, and, for every ordered pair $(j, i)\in [n]\times [j]$, a partition $\pi(j)^i$ such that $\pi(j)^i$ has $\lambda_{i}^{j+1}-\lambda_{i}^j$ parts and length of each part is less than equal to $\lambda_{i}^j-\lambda_{i+1}^{j+1}$. We shall denote a partition overlaid pattern by a pair $(\lambda, \pi_\lambda)$, where $\lambda$ denotes a GT pattern and $\pi_\lambda = (\pi_\lambda(j)^i)_{j\in [n], i\in [j]}$ is an array of the partitions that satisfy the conditions for POP associated to $\lambda$.  \end{definition}

\subsection{} Given a partition overlaid pattern $(\lambda, \pi_\lambda)$, for $(j,i)\in [n]\times [j] $ and $0\leq s\leq \lambda^j_i-\lambda^{j+1}_{i+1}$,
 set
 $$r_{\lambda}^{ij}(s) = \text{ the number of parts of } \pi_\lambda(j)^i \text{ that are equal to } s,$$ 
and define elements $y_{ij}(\lambda,\pi_\lambda), y_{j}(\lambda,\pi_\lambda), y(\lambda,\pi_\lambda)
\in \bu(\mathfrak n^-[t])$ as follows :
$$y_{ij}(\lambda, \pi_\lambda) := (y_{ij}\otimes 1)^{r^{ij}_\lambda(0)}  (y_{ij}\otimes t)^{r^{ij}_\lambda(1)} \cdots (y_{ij}\otimes t^{ \lambda^j_i-\lambda^{j+1}_{i+1}})^{ r_{\lambda}^{ij}( \lambda^j_i-\lambda^{j+1}_{i+1})}, $$
\begin{align*} & y_j(\lambda,\pi_\lambda) =  y_{1j}(\lambda,\pi_\lambda) y_{2j}(\lambda,\pi_\lambda)\cdots y_{jj}(\lambda,\pi_\lambda),\\
&y(\lambda, \pi_\lambda) = y_1(\lambda,\pi_\lambda)y_2(\lambda,\pi_\lambda)\cdots y_{n}(\lambda,\pi_\lambda).\end{align*}

\vspace{.35cm}

\subsection{The set $\mathfrak P(\mu)$ and an order on $\mathfrak P(\mu)$} Given $\mu = \sum\limits_{i=1}^n \mu_i\omega_i\in P^+$, let $\mathfrak P(\mu)$ be the set of all partition overlaid patterns $(\eta, \pi_\eta)$ such that $\eta$ is a GT pattern with bounding sequence $b_\mu = (\sum\limits_{i=1}^n\mu_i, \sum\limits_{i=2}^n\mu_i,\cdots, \mu_n,0 )$ and $\pi_\eta = (\pi_\eta(j)^i)_{j\in [n], i\in [j]}$ is an array of partitions that satisfy the conditions for POP associated to $\eta$.

\noindent For each pair of integers $(j,i)\in [n]\times [j]$, define a function $|.|_{ij}: \mathfrak P(\mu)\rightarrow \mathbb Z$ such that
$$|(\eta,\pi_\eta)|_{ij}= 
|\pi_\eta(j)^i| = \sum\limits_{s=0}^{\eta_i^j-\eta^{j+1}_{i+1}}r_\eta^{ij}(s) $$ 
 where $ r^{ij}_\eta(s)$ is as defined in \secref{POP.basis}.\\ Given $(\eta,\pi_\eta), (\zeta,\pi_\zeta) \in \mathfrak P(\mu)$, we say $(\eta,\pi_\eta)\geq (\zeta,\pi_\zeta)$ if \\
\begin{enumerate}
\item[i.] there exists a pair $(k,l)\in[n]\times[k]$ such that $|(\eta,\pi_\eta)|_{kl} < |(\zeta,\pi_\zeta)|_{kl}$ and  \\
$|(\eta,\pi_\eta)|_{ij} =|(\zeta,\pi_\zeta)|_{ij}$ for all pairs $(i,j)\in\{(k,p): p>l\}\cup \{(q,p): q>k\}$. 
\vspace{.15cm}

\item[ii.] or $|(\eta,\pi_\eta)|_{ij} =|(\zeta,\pi_\zeta)|_{ij}$ for all $(j,i)\in [n]\times[j]$ and  there exists  $(k,l)\in [n]\times[k]$  and $0\leq s\leq \eta^l_k-\eta^{l+1}_{k+1}$ such that $r_\eta^{kl}(s) > r_\zeta^{kl}(s)$ and $r_\eta^{ij}(s) = r_\zeta^{ij}(s)$ for all pairs $(i,j)\in\{(k,p): p>l\}\cup \{(q,p): q>k\}$ and all $s$. 
\end{enumerate}
Note that this makes $\mathfrak P(\mu)$ a totally ordered set. Hence given $(\eta,\pi_\eta) \in \mathfrak P(\mu)$ there exists a unique element $(\eta^+,\pi_{\eta^+}) \in \mathfrak P(\mu)$ such that $(\eta^+,\pi_{\eta^+}) > (\eta, \pi_\eta)$ and for all $(\zeta, \pi_\zeta) \in \mathfrak P(\mu)$ satisfying the condition $(\zeta, \pi_\zeta) > (\eta, \pi_\eta)$, we have $(\zeta, \pi_\zeta) \geq (\eta^+,\pi_{\eta^+}).$

\label{POP}

\subsection{} With regard to the order introduced on $\mathfrak P(\mu)$ the following result follows from \cite[Theorem 4.3]{pop}.

 \begin{proposition}\label{popth} Given $\mu = \sum\limits_{i=1}^n \mu_i\omega_i\in P^+$, let $\mathfrak P(\mu)$ be the set of all partition overlaid patterns $(\eta, \pi_\eta)$ as defined above. 
Then, $$ \mathfrak B(\mu) = \{ y(\eta,\pi_\eta)w_\mu : (\eta,\pi_\eta)\in \mathfrak P(\mu)\} $$ forms an ordered basis  of the $\mathfrak{sl}_{n+1}[t]$-module $W_{loc}(\mu),$ where the order on $ \mathfrak B(\mu)$ is inherited from the order on $\mathfrak P(\mu)$ .
 \end{proposition}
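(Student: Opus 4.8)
The plan is to obtain this as a repackaging of \cite[Theorem 4.3]{pop}, the only genuine tasks being (a) matching the indexing set used in \cite{pop} with $\mathfrak P(\mu)$, (b) reconciling the cardinality with \lemref{CL.W(lambda).dim}, and (c) checking that the relation $\geq$ introduced in \secref{POP} is a total order refining the triangularity used in \cite{pop}, so that ``ordered basis'' is meaningful for the leading-term arguments in the later sections.

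First I would unwind \cite[Theorem 4.3]{pop}: it attaches to each POP $(\eta,\pi_\eta)$ with bounding sequence $b_\mu$ the element $y(\eta,\pi_\eta)w_\mu$, asserts that these elements are linearly independent in $W_{loc}(\mu)$, and shows that every Chari--Loktev monomial acting on $w_\mu$ expands, modulo the defining relations of $W_{loc}(\mu)$, as a $\mathbb C$-linear combination of the $y(\eta,\pi_\eta)w_\mu$. Together these give that $\mathfrak B(\mu)$ both spans and is linearly independent, hence is a $\mathbb C$-basis of $W_{loc}(\mu)$. As a consistency check I would count $\mathfrak P(\mu)$: summing, over GT patterns $\eta$ with last row $b_\mu$, the number of admissible overlays $\pi_\eta$ — each $\pi_\eta(j)^i$ a partition with $\eta_i^{j+1}-\eta_i^j$ parts, every part of size at most $\eta_i^j-\eta_{i+1}^{j+1}$ — recovers exactly $\prod_{i=1}^n\binom{n+1}{i}^{m_i}$, matching $\dim W_{loc}(\mu)$ from \lemref{CL.W(lambda).dim}; this uses the Gelfand--Tsetlin enumeration of the $\mathfrak g$-weight spaces together with the Chari--Loktev description of the graded pieces. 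In particular $(\eta,\pi_\eta)\mapsto y(\eta,\pi_\eta)w_\mu$ is a bijection from $\mathfrak P(\mu)$ onto $\mathfrak B(\mu)$.

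It then remains to transport the order. By \secref{POP} the relation $\geq$ makes the finite set $\mathfrak P(\mu)$ totally ordered, so pushing it forward along the above bijection endows $\mathfrak B(\mu)$ with a total order, which is the assertion. The one point worth spelling out is that the two-tier prescription of \secref{POP} is genuinely antisymmetric and total: if two POPs are distinct they must already differ in some statistic $r_\eta^{ij}(s)$, since a POP is recovered from the full array $(r_\eta^{ij}(s))_{i,j,s}$ — indeed $\sum_s r_\eta^{ij}(s)=\eta_i^{j+1}-\eta_i^j$ is the number of parts of $\pi_\eta(j)^i$, a downward recursion from the fixed last row $b_\mu$ then recovers every entry $\eta_i^j$, and the multiset $(r_\eta^{ij}(s))_s$ recovers $\pi_\eta(j)^i$ itself — so either the size tuple $(|(\eta,\pi_\eta)|_{ij})$ differs, and clause (i) orders the pair, or it agrees and clause (ii) does. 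The main obstacle, modest as it is, is this bookkeeping: \cite{pop} may state Theorem~4.3 with a slightly different normalization of the monomials $y(\eta,\pi_\eta)$ or with a coarser (possibly only partial) order governing the change of basis, so I would need to verify that the order of \secref{POP} is a linear extension of it — equivalently, that the transition matrix from the Chari--Loktev spanning set to $\mathfrak B(\mu)$ is unitriangular with respect to $\geq$. Once this compatibility is recorded, the proposition follows at once.
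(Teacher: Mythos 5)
Your proposal is correct and follows essentially the same route as the paper: the paper gives no independent proof, stating only that the result "follows from \cite[Theorem 4.3]{pop}," and your argument is an elaboration of exactly that citation together with the observation (already asserted in \secref{POP}) that the relation $\geq$ is a total order, so it can be transported to $\mathfrak B(\mu)$ along the bijection. The only mild overreach is your final paragraph: for the proposition as stated, nothing about unitriangularity of a transition matrix or about $\geq$ refining the order used in \cite{pop} is required, since ``ordered basis'' here just means a basis indexed by a totally ordered set — that compatibility is only needed for the later leading-term arguments in Sections 6 and 7, not for this proposition itself.
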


\begin{remark} Let $w_0$ denote the longest element of the Weyl group of $\mathfrak g = \mathfrak{sl}_{n+1}$. From \thmref{popth} it is clear that for  $\mu = \sum\limits_{i=1}^n \mu_i\omega_i$,  the vector $w_{w_0\mu}$ given by:
\begin{equation} \label{w_w_0} w_{w_0\mu} := (y_{11})^{(m_n)}(y_{12})^{(m_{n-1})}(y_{22})^{m_{n}}\cdots (y_{nn})^{(m_n)}w_\mu \in \mathfrak B(\mu)\end{equation} is the unique element of $W_{loc}(\mu)$ of weight $w_0\mu$.
Notice that for $1\leq i\leq j\leq n$, the exponent of $y_{ij}$ in the POP-expansion of $w_{w_0\mu}$ is $m_{n-j+i}$. Setting, 
 $$w_\mu^{(i+1j)} := (y_{i+1j})^{m_{n-j+i+1}}\cdots (y_{jj})^{m_n}(y_{1j+1})^{m_{n-j}}\cdots (y_{nn})^{m_n}w_\mu $$ in $W(\mu)$, observe that the $h_{\alpha_{ij}}$-weight of the element $w_\mu^{(i+1j)}$ is 
 $$\mu(h_{\alpha_{ij}}) -\sum\limits_{l=j+1}^{n} \sum\limits_{k=1}^{l} m_{n-l+k}\alpha_{kl}(h_{\alpha_{ij}}) - \sum\limits_{p=i+1}^j m_{n-j+p}\alpha_{pj}(h_{\alpha_{ij}}) = m_{n-j+i}.$$
\end{remark}
\label{POP.basis}

%
%
%
%
%
%
%
%
%
%
\section{CV-Realization and tensor product of Local Weyl modules } 

In \cite{MR3296163}, a family of graded representations of the current Lie algebras, which we refer to as CV-modules,  was introduced. It has been shown that these modules subsume many disparate classes studied earlier. In particular, these are related to Weyl modules, higher-level Demazure modules, Kirillov-Reshetikhin modules and truncated Weyl modules \cite{MR3296163, MR3407180}. These play an important role in the proof of the main results of this paper. In this section we recall their definition and properties. 

\subsection{} The following definition of CV-modules was given in \cite{MR3296163}.\\

\noindent  For a non-zero dominant integral weight $\mu \in P^{+}$, a  $|\Phi^{+}|$-tuple of partitions  $\boldsymbol{\xi} = ( \xi^{\alpha})_{\alpha \in \Phi^{+} }$ is said to be $\mu $-compatible,  if 
    $$\xi^{\alpha} = (\xi_{1}^{\alpha} \geq \xi_{2}^{\alpha} \geq \dots \geq \xi_{s}^{\alpha} \geq \dots \geq 0)_{\alpha \in \Phi^+}, \quad |\xi^{\alpha}| = \sum_{j \geq 1} \xi_{j}^{\alpha} = \mu(h_{\alpha}).$$
 \begin{definition} \label{CV definition}  Given $\mu\in P^+$ and a $|\Phi^+|$-tuple of $\mu$-compatible partition $\boldsymbol{\xi} = ( \xi^{\alpha})_{\alpha \in \Phi^{+} }$, the CV-module  $V(\boldsymbol{\xi})$ is defined as the graded quotient of $W_{loc}(\mu)$ by the submodule generated by the graded elements 
$$(x_{\alpha}\otimes t )^s (y_{\alpha}\otimes 1)^{r+s} w_\mu = 0 ; \quad \forall \, \alpha \in \Phi^{+} , s,r \in \mathbb{N} , \quad r+s \geq 1+rk+\sum_{j \geq k+1} \xi_{j}^{\alpha}, \text{for some } k \in \mathbb{N}.$$  \end{definition}

 \subsection{} Define a power series in the indeterminate $u$ as follows. For $\alpha \in \Phi^{+}$ let
	\begin{eqnarray*}
	H_{\alpha}(u) = \exp \left(- \sum_{r=1}^\infty \frac{h_{\alpha}\otimes t^r}{r} u^r\right)
	\end{eqnarray*}
	\\
	Let $P_{\alpha}(u)_k$ denote the coefficient of $u^k$ in  $H_\alpha(u)$. 
	For $s,r,k \in \mathbb{Z}_{+}$ and $x\in \mathfrak{g}$, define elements $x(r,s), {_k}x(r,s) \in U(\mathfrak{g}[t])$ as : 
	$$ x(r,s) = \sum\limits_{(b_{p})_{p \geq 0}} (x \otimes 1)^{(b_{0})}(x \otimes t^{1})^{(b_{1})}   \cdots (x \otimes t^{s})^{(b_{s})}$$
$$ {_k}x(r,s) = \sum\limits_{(b_{p})_{p \geq k}} (x \otimes t^k)^{(b_{k})}(x \otimes t^{k+1})^{(b_{k+1})}   \cdots (x \otimes t^{s})^{(b_{s})}$$	
where $p , b_{p} \in \mathbb{Z}_{+}$ are such that $ \sum\limits_{p}b_{p} = r $, $\sum\limits_{p}pb_{p} = s$ and for any $X \in \mathfrak{g}$, $X^{(p)} = \dfrac{X^{p}}{p!}$. The following result was proved in  \cite{MR0502647} and reformulated in its present form in \cite{MR1850556}. 
	
\begin{lemma} \label{Garland equation} Given $s\in N, r \in \mathbb{Z}_{+}$, we have 
\begin{equation}\label{G.eq}
(x_{\alpha}\otimes t)^{(s)}(y_{\alpha}\otimes 1)^{(r+s)} - 
(-1)^s \big(\ \underset{k\geq 0}{\sum}\  y_{\alpha}(r,s-k){P_{\alpha}(u)}_k\big) 
\in \bu(\mathfrak g[t])\mathfrak n^+[t].
		\end{equation} 	\end{lemma}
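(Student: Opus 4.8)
The plan is to recognize this as the classical Garland identity, originally due to Garland in his work on loop algebras (the cited \cite{MR0502647}), so the proof strategy is to establish it by a direct computation inside $\bu(\mathfrak{sl}_2[t])$ — or rather inside the rank-one subalgebra generated by $x_\alpha\otimes\mathbb{C}[t]$, $y_\alpha\otimes\mathbb{C}[t]$, $h_\alpha\otimes\mathbb{C}[t]$ — and then observe that the identity holds verbatim in $\bu(\mathfrak{g}[t])$ because these elements close up into a copy of $\mathfrak{sl}_2[t]$. First I would reduce to the $\mathfrak{sl}_2$ case: fix $\alpha\in\Phi^+$, write $x=x_\alpha$, $y=y_\alpha$, $h=h_\alpha$, and work modulo the left ideal $\bu(\mathfrak{g}[t])\mathfrak{n}^+[t]$ — equivalently, it suffices to prove the identity after applying it to a highest-weight vector $v$ of an arbitrary highest-weight $\mathfrak{g}[t]$-module, i.e. a vector killed by $\mathfrak{n}^+\otimes\mathbb{C}[t]$ on which $h\otimes t^r$ acts by scalars. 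So the real content is: in any such module,
$$(x\otimes t)^{(s)}(y\otimes 1)^{(r+s)}v = (-1)^s\sum_{k\ge 0} y(r,s-k)\,P_\alpha(u)_k\, v.$$

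Next I would prove this rank-one identity by induction on $s$. The base case $s=0$ is the tautology $(y\otimes 1)^{(r)} = y(r,0)$ (the only composition with $\sum_p p b_p = 0$ is $b_0 = r$), and $P_\alpha(u)_0 = 1$. For the inductive step, I would commute one factor of $x\otimes t$ past the string $(y\otimes 1)^{(r+s)}$ using the $\mathfrak{sl}_2$ relations $[x\otimes t, y\otimes 1] = h\otimes t$ and $[h\otimes t, y\otimes 1] = -2\,y\otimes t$: this generates terms involving $h\otimes t$ (which, acting on the highest-weight vector after pushing the remaining $x\otimes t$'s to the right, contributes the exponential factor $H_\alpha(u)$, hence the $P_\alpha(u)_k$'s) and terms involving $y\otimes t$ in place of a $y\otimes 1$ (which is exactly how the sum $y(r,s-k)$ over compositions $(b_p)$ with $\sum b_p = r$, $\sum p b_p = s-k$ gets built up recursively). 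Bookkeeping the divided powers and the binomial coefficients $\binom{r+s}{s}$-type factors carefully is what produces the precise form stated. The generating-function packaging via $H_\alpha(u) = \exp(-\sum_{r\ge1}\frac{h_\alpha\otimes t^r}{r}u^r)$ is precisely the device that makes the $h\otimes t^r$-contributions assemble into a clean closed form rather than an unwieldy multiple sum.

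The main obstacle — really the only one — is the combinatorial bookkeeping in the inductive step: tracking how divided powers $(x\otimes t)^{(s)}$ interact with $(y\otimes 1)^{(r+s)}$ under iterated commutation, and verifying that the resulting sum over "how many $h\otimes t$'s were spun off at each stage" reorganizes exactly into $\sum_k y(r,s-k)P_\alpha(u)_k$. Since this is a known classical result with published proofs in \cite{MR0502647} and \cite{MR1850556}, in the paper itself I would simply cite those references rather than reproduce the computation; the value of including the statement is to fix the notation $x(r,s)$, $\,_kx(r,s)$, $H_\alpha(u)$, $P_\alpha(u)_k$ for later use (notably in manipulating the CV-module relations of \defref{CV definition}). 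If a self-contained argument were wanted, the cleanest route is the highest-weight-vector reduction above combined with the induction on $s$, using the $\mathfrak{sl}_2[t]$-triple attached to each positive root $\alpha$.
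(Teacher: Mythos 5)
The paper offers no proof of this lemma at all; it states it verbatim with the remark that it "was proved in \cite{MR0502647} and reformulated in its present form in \cite{MR1850556}," which is precisely the citation route you say you would take. Your proof sketch (reduction to the rank-one $\mathfrak{sl}_2[t]$ attached to $\alpha$, checking modulo the left ideal $\bu(\mathfrak{g}[t])\mathfrak n^+[t]$, induction on $s$) is a sound outline of the classical argument, so there is nothing to flag.
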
   

 Denoting the image of $w_\mu$ in $V(\boldsymbol{\xi})$ by $v_{\boldsymbol{\xi}}$, 
and using  \lemref{Garland equation}, the CV module $V(\boldsymbol{\xi})$ can be defined  as a graded $\mathfrak g[t]$-module generated by a non-zero vector $v_\bxi$ which satisfies the following relations: 
\begin{align} 
&(x_{\alpha}\otimes t^r )v_\bxi  = 0; \quad &\forall\, \alpha \in \Phi^{+}, r\geq 0 \\
&(h \otimes t^s)v_{\bxi}  = \delta_{s,0}\mu(h) v_{\bxi}; \quad &\forall\,  h \in \mathfrak{h}, s \in Z_{+}\\
&(y_{\alpha_{i}} \otimes 1)^{\mu(h_{\alpha_{i}})+1} v_{\bxi} = 0 ; \quad &\forall\, i \in I \\
\label{xrs relation} 
&y_{\alpha}(r,s)v_{\bxi} = 0, \quad &\forall\, \alpha\in \Phi^+, \text{when } r+s \geq 1+rk+ \sum_{j \geq k+1}\xi_{j}^{\alpha}.
\end{align}
It was shown in  \cite[Proposition 2.6]{MR3296163} that relation\eqref{xrs relation} in $V(\xi)$ is equivalent to the following:
\begin{align} \label{xrs relation.2} 
&{_k}y_{\alpha}(r,s)v_{\bxi} = 0, \quad &  \quad &\forall\, \alpha\in \Phi^+, \quad \text{when } r+s \geq 1+rk+ \sum_{j \geq k+1}\xi_{j}^{\alpha}.
\end{align}

\noindent Given $\lambda=\sum\limits_{i=1}^n m_i\omega_i$, set $m_{ij} = \sum\limits_{k=i}^j m_k$ and   $\xi_\lambda^{\alpha} = (1^{m_{ij}})$, for $\alpha=\alpha_{ij}\in \Phi^+$, $1\leq i\leq j\leq n$. It has been shown that the local Weyl module $W_{loc}(\lambda)$ is isomorphic to the CV module $V(\bxi_\lambda)$, where 
$\bxi_\lambda= (\xi_\lambda^\alpha)_{\alpha\in \Phi^+}$. Using the $CV$-module 
realization of local Weyl modules, the following was 
proved in \cite{24}.
\begin{lemma}\label{sl2.CV.op} For $n\in \mathbb Z_+$, let $W_{loc}(n)$ be the local Weyl module for $\mathfrak{sl}_2[t]$ with highest weight $n\omega_1$. If $w_n$ is the highest weight generator of $W_{loc}(n)$, then $w_{-n}: = (y_{\alpha}\otimes 1)^{n}w_n$ is a non-zero vector in $ W_{loc}(n)$ such that :
$$\begin{array}{lll}
	y_{\alpha}\otimes t^r. w_{-n} &= 0,  \quad &\forall\, r\geq 0,\\
        h_\alpha \otimes t^r. w_{-n} &= \delta_{0,r} (-n) w_{-n}, \quad &\forall\, r\geq 0,\\
	x_{\alpha}(r,s). w_{-n} &=0,  \quad & \text{if } r+s \geq 1+rl+n-l .   
\end{array}$$ Further, $W_{loc}(n)$ is isomorphic to 
$\bu(\mathfrak n^+[t]).w_{-n}$ as a $\mathfrak{sl}_2[t]$ module. \end{lemma}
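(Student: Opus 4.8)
Write $x,y,h$ for the Chevalley generators $x_\alpha,y_\alpha,h_\alpha$ of $\mathfrak{sl}_2$ and view $\mathfrak{sl}_2=\mathfrak{sl}_2\otimes 1$ inside $\mathfrak{sl}_2[t]$. The plan is to read off everything from the $\mathfrak{sl}_2$-module structure of $W_{loc}(n)$ and then use the Chevalley involution to obtain the $x_\alpha(r,s)$-relation. Since $\bu(\mathfrak n^+[t])w_n=0$, we have $W_{loc}(n)=\bu(\mathfrak n^-[t])w_n$, so all weights of $W_{loc}(n)$ are $\le n$ and the weight-$n$ space is exactly $\mathbb C w_n$ (only the constant term of $\bu(\mathfrak n^-[t])$ has weight $0$). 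Because $W_{loc}(n)$ is a finite-dimensional $\mathfrak{sl}_2$-module, its weights then lie in $[-n,n]$, the space $W_{loc}(n)_{-n}$ is one-dimensional, and---being graded and containing the degree-$0$ vector $w_{-n}=(y\otimes1)^nw_n$---it equals $\mathbb C w_{-n}$ and lives in grade $0$. Nonvanishing of $w_{-n}$ is then clear; alternatively, the $\mathfrak{sl}_2$-submodule generated by $w_n$ is a nonzero quotient of $V(n\omega_1)$ by \eqref{irr.rel}, hence is $\cong V(n\omega_1)$, in which $(y\otimes1)^nw_n$ is the lowest weight vector.

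With these facts the first two families of relations are immediate: $(y\otimes t^r)w_{-n}$ has weight $-n-2\notin[-n,n]$, so it vanishes for every $r\ge 0$; $(h\otimes t^r)w_{-n}\in W_{loc}(n)_{-n}$ but has grade $r$, so it vanishes for $r\ge1$, while $(h\otimes1)w_{-n}=-nw_{-n}$ from $[h\otimes1,(y\otimes1)^n]=-2n(y\otimes1)^n$; and $(x\otimes1)^{n+1}w_{-n}$ has weight $n+2$, so it vanishes. For $x_\alpha(r,s)w_{-n}=0$ I would apply the Chevalley involution $\sigma$ of $\mathfrak{sl}_2[t]$ (the $\mathbb C[t]$-linear extension of $x\leftrightarrow y$, $h\mapsto-h$) to Garland's identity \eqref{G.eq}. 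Since $\sigma$ sends $x_\alpha\otimes t\mapsto y_\alpha\otimes t$, $y_\alpha\otimes1\mapsto x_\alpha\otimes1$, $y_\alpha(r,s)\mapsto x_\alpha(r,s)$, $H_\alpha(u)\mapsto H_\alpha(u)^{-1}$, and $\bu(\mathfrak g[t])\mathfrak n^+[t]\mapsto\bu(\mathfrak g[t])\mathfrak n^-[t]$, the image of \eqref{G.eq} is
\[(y_\alpha\otimes t)^{(s)}(x_\alpha\otimes1)^{(r+s)}-(-1)^s\sum_{k\ge0}x_\alpha(r,s-k)\widetilde P_{\alpha,k}\ \in\ \bu(\mathfrak g[t])\mathfrak n^-[t],\]
where $\widetilde P_{\alpha,k}$ is a polynomial in $\{h\otimes t^j:j\ge1\}$ with zero constant term. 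Applying this to $w_{-n}$: the right-hand side dies because $\mathfrak n^-[t]w_{-n}=0$; the summands with $k\ge1$ die because $\widetilde P_{\alpha,k}w_{-n}=0$ (each monomial ends in some $h\otimes t^j$, $j\ge1$); what remains is $(y_\alpha\otimes t)^{(s)}(x_\alpha\otimes1)^{(r+s)}w_{-n}=(-1)^sx_\alpha(r,s)w_{-n}$ for $s\ge1$, the case $s=0$ being $x_\alpha(r,0)=(x\otimes1)^{(r)}$ and already handled. As $(x\otimes1)^{(r+s)}w_{-n}=0$ whenever $r+s\ge n+1$ (weight $>n$), we get $x_\alpha(r,s)w_{-n}=0$ in that range; and since $1+rl+n-l=(n+1)+l(r-1)\ge n+1$ for $r\ge1$, every instance of the asserted inequality reduces to this (the instances with $r=0$ being vacuous).

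For the last assertion, $(x\otimes1)^nw_{-n}=(x\otimes1)^n(y\otimes1)^nw_n$ lies in $W_{loc}(n)_n=\mathbb Cw_n$, and the standard $\mathfrak{sl}_2$-computation in $V(n\omega_1)$ gives $(x\otimes1)^n(y\otimes1)^nw_n=(n!)^2w_n\ne0$; hence $w_n\in\bu(\mathfrak{sl}_2[t])w_{-n}$ and so $\bu(\mathfrak{sl}_2[t])w_{-n}=W_{loc}(n)$ by cyclicity. Combining this with the PBW factorization $\bu(\mathfrak{sl}_2[t])=\bu(\mathfrak n^+[t])\bu(\mathfrak h[t])\bu(\mathfrak n^-[t])$ and the relations $(y\otimes t^r)w_{-n}=0$, $(h\otimes t^r)w_{-n}=\delta_{r,0}(-n)w_{-n}$---which force $\bu(\mathfrak h[t])\bu(\mathfrak n^-[t])w_{-n}=\mathbb Cw_{-n}$---one concludes $W_{loc}(n)=\bu(\mathfrak{sl}_2[t])w_{-n}=\bu(\mathfrak n^+[t])w_{-n}$. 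The only genuinely delicate point is the $x_\alpha(r,s)$-relation: one must transport Garland's identity through $\sigma$ correctly (tracking how the $H_\alpha$-series inverts and why the resulting sign is harmless) and verify that the stated inequality family collapses to the single bound $r+s\ge n+1$; everything else is weight-and-grading bookkeeping once the shapes of $W_{loc}(n)_{n}$ and $W_{loc}(n)_{-n}$ are pinned down.
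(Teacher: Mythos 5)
The paper does not prove \lemref{sl2.CV.op} here---it is imported from the authors' earlier paper \cite{24}---so there is no in-source proof to compare against. Judged on its own, your argument is correct, and the approach (weight/grading bookkeeping for the first two relations, Chevalley involution applied to Garland's identity for the third, PBW for the final claim) is almost certainly what \cite{24} does; indeed \remref{lowest.wt.W.mu.rem} later invokes exactly this Chevalley-type automorphism of $\mathfrak{sl}_{n+1}[t]$.

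Two small points. The sentence ``Nonvanishing of $w_{-n}$ is then clear'' is circular as written: one-dimensionality of $W_{loc}(n)_{-n}$ does not by itself rule out $w_{-n}=0$ (the one-dimensional space could a priori sit in positive grade). Your parenthetical alternative---that $\bu(\mathfrak{sl}_2)w_n$ is a nonzero quotient of $V(n\omega_1)$ by \eqref{irr.rel}, hence isomorphic to it, so $(y_\alpha\otimes1)^nw_n$ is its nonzero lowest-weight vector---is what actually proves nonvanishing and should be presented as the primary argument; with nonvanishing in hand, the one-dimensionality of $W_{loc}(n)_{-n}$ then concentrates that space in grade zero, which is precisely what the $h_\alpha\otimes t^r$ relation requires. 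Second, the degenerate cases should be set aside explicitly: $x_\alpha(0,0)=1$ never annihilates $w_{-n}$, so the inequality family as literally written would be false if $r=s=0$ were admitted. But the CV-module convention being emulated (see \defref{CV definition}, and note that \lemref{Garland equation} requires $s\in\mathbb N$) has $s\geq 1$, $r\geq 0$, $l\in\mathbb N$ built in, and under those conventions the stated inequalities do collapse to $r+s\geq n+1$ exactly as you observe.
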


\noindent Denoting $x_{\alpha_{ij}}, y_{\alpha_{ij}}$ in $\mathfrak{sl}_{n+1}$ by $x_{ij}, y_{ij}$, respectively, note that for $\alpha=\alpha_{ij}\in \Phi^+$, we have
\begin{equation}\label{[y_i_j]}
[y_{rs}, y_{ij}]= \left\{ \begin{array}{ll} 
-y_{rj}, & \text{if } r\leq s=i-1\leq j \\
\, 0, & \text{if } s+1<i,
\text{ or } i\leq r\leq s\leq j, \text{ or } r\leq j\leq s \end{array}\right. \end{equation} 

\noindent Hence we get the following as a corollary of \lemref{sl2.CV.op}.

\begin{corollary} \label{lowest.wt.W.mu}For $\mu = m_1 \omega_{1} +m_2 \omega_{2} + \dots + m_n \omega_n \in P^{+}$ let $w_{w_0\mu}$ be the lowest weight vector of $W_{loc}(\mu)$ as defined in \eqref{w_w_0}. Then for all $\alpha\in \Phi^+,$ 
 \begin{align*}
      y_{\alpha}\otimes t^r. w_{w_{0}\mu} & = 0 \quad \forall r\geq 0 \\
      h_{\alpha} \otimes t^r.  w_{w_{0}\mu} & = \delta_{r,0}\,  w_{0}\mu(h_{\alpha}) 
w_{w_{0}\mu}  \qquad \forall\, r \geq 0 \\
      x_{\alpha}(r,s). w_{w_{0}\mu} & = 0, \quad  \text{if } r+s \geq 1+rk-w_{0}(\mu)(h_{\alpha})-k
  \end{align*}
\end{corollary}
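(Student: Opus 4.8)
The plan is to reduce the three relations to the $\mathfrak{sl}_2[t]$ statement \lemref{sl2.CV.op}, applied to each positive‑root subalgebra, after first disposing of the two ``diagonal'' relations by weight and grading arguments. Since $W_{loc}(\mu)$ is finite dimensional by \lemref{CL.W(lambda).dim}, its $\mathfrak g$-character is $W$-invariant with highest weight $\mu$, hence its set of weights is $W$-stable with least element $w_0\mu$; in particular $w_0\mu-\alpha$ is not a weight of $W_{loc}(\mu)$ for any $\alpha\in\Phi^+$. As $(y_\alpha\otimes t^r)w_{w_0\mu}$ has weight $w_0\mu-\alpha$ it must vanish for all $r\ge 0$, which is the first relation. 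For the second, $(h_\alpha\otimes 1)w_{w_0\mu}=w_0\mu(h_\alpha)w_{w_0\mu}$ since $w_{w_0\mu}$ is a weight vector, while for $r\ge 1$ the vector $(h_\alpha\otimes t^r)w_{w_0\mu}$ lies in the $w_0\mu$-weight space of $W_{loc}(\mu)$ in positive degree, which is zero because by \eqref{w_w_0} this weight space is one dimensional and concentrated in degree $0$.

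For the third relation, fix $\alpha\in\Phi^+$, put $n_\alpha:=-w_0\mu(h_\alpha)\in\mathbb Z_+$, and let $\mathfrak{sl}_\alpha[t]$ be the $\mathfrak{sl}_2[t]$-subalgebra spanned by $\{x_\alpha\otimes t^r,\ y_\alpha\otimes t^r,\ h_\alpha\otimes t^r:r\ge 0\}$, with Chevalley involution $\sigma$ (so $\sigma(x_\alpha\otimes t^r)=-y_\alpha\otimes t^r$ and $\sigma(h_\alpha\otimes t^r)=-h_\alpha\otimes t^r$). I would work with the $\mathfrak{sl}_\alpha[t]$-submodule $M:=\bu(\mathfrak{sl}_\alpha[t])\,w_{w_0\mu}\subseteq W_{loc}(\mu)$ and its $\sigma$-twist $M^\sigma$. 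The first two relations say exactly that in $M^\sigma$ the vector $w_{w_0\mu}$ is killed by $\sigma(x_\alpha\otimes t^r)$ for all $r$ and that $\sigma(h_\alpha\otimes t^s)$ acts on it by $n_\alpha\delta_{s,0}$; moreover $w_{w_0\mu}\in W_{loc}(\mu)[0]=V(\mu)$ is killed by $y_\alpha$ and has $h_\alpha$-weight $-n_\alpha$, so the $\mathfrak{sl}_\alpha$-submodule it generates is irreducible of dimension $n_\alpha+1$, whence $(x_\alpha\otimes 1)^{n_\alpha+1}w_{w_0\mu}=0$. Thus $w_{w_0\mu}$ satisfies in $M^\sigma$ the defining relations \eqref{loc.Weyl.2.5} of the $\mathfrak{sl}_2[t]$-local Weyl module $W_{loc}(n_\alpha)$, so there is a surjective $\mathfrak{sl}_\alpha[t]$-map $W_{loc}(n_\alpha)\twoheadrightarrow M^\sigma$ sending its highest weight generator to $w_{w_0\mu}$. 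By \lemref{sl2.CV.op} --- equivalently, by the $CV$-realization of $W_{loc}(n_\alpha)$ --- the highest weight generator of $W_{loc}(n_\alpha)$ is killed by $y_\alpha(r,s)$ whenever $r+s\ge 1+rk+\max(n_\alpha-k,0)$; pushing this relation through the surjection and using $\sigma(y_\alpha(r,s))=(-1)^r x_\alpha(r,s)$ yields $x_\alpha(r,s)w_{w_0\mu}=0$ for $r+s\ge 1+rk-w_0\mu(h_\alpha)-k$, which is the assertion.

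The step I expect to need the most care is the transfer in the last sentence: one must check that $w_{w_0\mu}$ really corresponds to the \emph{highest} weight generator of $W_{loc}(n_\alpha)$ under $\sigma$ (so that the version of \lemref{sl2.CV.op} that transfers produces exactly the stated bound, with $\max(n_\alpha-k,0)=-w_0\mu(h_\alpha)-k$ in the relevant range of $k$ and the remaining cases following from $k=n_\alpha$), and that all three relations \eqref{loc.Weyl.2.5} genuinely hold for $w_{w_0\mu}$ in $M^\sigma$ --- the only genuinely new input being $(x_\alpha\otimes 1)^{n_\alpha+1}w_{w_0\mu}=0$, which uses nothing beyond the $\mathfrak g$-module structure of $W_{loc}(\mu)[0]$. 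A cleaner uniform alternative, which proves all three relations simultaneously, is to use that the finite dimensional module $W_{loc}(\mu)$ integrates to the simply connected group $G$, so that twisting the $\mathfrak g[t]$-action by the inner automorphism $\operatorname{Ad}(\dot w_0)$ gives $W_{loc}(\mu)^{\operatorname{Ad}(\dot w_0)}\cong W_{loc}(\mu)$ via $v\mapsto\dot w_0^{-1}v$; this isomorphism carries $w_\mu$ to a nonzero multiple of $w_{w_0\mu}$, and transporting along it the relations \eqref{loc.Weyl.2.5} together with the $CV$-relations $y_\alpha(r,s)w_\mu=0$ --- with $\mathfrak n^+$ replaced by $\mathfrak n^-$ and each $\alpha\in\Phi^+$ by $-w_0\alpha\in\Phi^+$ --- yields the three displayed relations at once.
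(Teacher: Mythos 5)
Your proof is correct, and for the first two relations it takes a genuinely different and arguably cleaner route than the paper. The paper establishes $y_\alpha\otimes t^r\cdot w_{w_0\mu}=0$ and $h\otimes t^r\cdot w_{w_0\mu}=0$ ($r\ge 1$) by a local, computational argument: it uses the commutation relations \eqref{[y_i_j]} to unwind the POP-basis expression \eqref{w_w_0} for $w_{w_0\mu}$ and lands each resulting term in the $\mathfrak{sl}_2[t]$-submodule through the intermediate vectors $w_\mu^{(k+1,s)}$, where \lemref{sl2.CV.op} applies; the case of $h\otimes t^r$ is then declared ``similar.'' You bypass all of this with global structure: the $W$-invariance of the finite-dimensional $\mathfrak g$-character shows $w_0\mu$ is the unique minimal weight, so $w_0\mu-\alpha$ is not a weight, and the one-dimensionality of $W_{loc}(\mu)_\mu$ (hence of $W_{loc}(\mu)_{w_0\mu}$) together with the fact that $w_{w_0\mu}$ lives in grade $0$ kills the positive-degree $h\otimes t^r$ terms. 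This avoids the POP-specific bookkeeping and makes it transparent that the first two relations are forced by the $\mathfrak g$-module structure alone. For the third relation the two proofs are essentially the same in spirit — reduce to the root $\mathfrak{sl}_2[t]$-subalgebra and invoke \lemref{sl2.CV.op} — but the paper does so rather tersely, while you make the underlying mechanism explicit: twist by the Chevalley involution of $\mathfrak{sl}_\alpha[t]$, verify that $w_{w_0\mu}$ satisfies the defining relations \eqref{loc.Weyl.2.5} of $W_{loc}(-w_0\mu(h_\alpha))$ in the twist (the only non-trivial one, $(x_\alpha\otimes 1)^{n_\alpha+1}w_{w_0\mu}=0$, coming from the grade-$0$ $\mathfrak{sl}_2$-module structure), obtain a surjection from that local Weyl module, and push the CV relation through using $\sigma(y_\alpha(r,s))=(-1)^r x_\alpha(r,s)$. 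This is a welcome expansion of a step the paper compresses into a single sentence. Your closing remark about obtaining all three relations at once by transporting along $\operatorname{Ad}(\dot w_0)$ is a legitimate further simplification that the paper does not use but that also records where \remref{lowest.wt.W.mu.rem} comes from.
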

\proof Using \eqref{[y_i_j]} and the notation introduced in \secref{POP} , 
observe that $$y_{qs}\otimes t^r w_{w_0\mu} \in \sum\limits_{k=1}^{q} \bu(n^-[t])  y_{ks}^{m_{n-s+k}}(y_{ks}\otimes t^r). w_{\mu}^{k+1s}.$$ Since $h_{\alpha_{ks}}$-weight of  
$w_{\mu}^{k+1s}$ is $m_{n-k+1}$, using \lemref{sl2.CV.op}, we see that 
$$y_{qs}\otimes t^r w_{w_0\mu} =0, \qquad \forall \, r\geq 0. $$
Similar arguments show that 
$h\otimes t^r.w_{w_0\mu} =0 $ for all $h\in \mathfrak h$ and $r\in \mathbb N.$ 
Finally observe that for each $\alpha\in \Phi^+$, the $h_\alpha$-weight of 
$w_{w_0\mu}$ is $w_0\mu(h_\alpha) = \mu(h_\alpha) -\sum\limits_{k=1}^n\sum\limits_{j=1}^k m_{n-k+j}\alpha_{jk}(h_\alpha)$. Hence applying \lemref{sl2.CV.op}, for 
the $\mathfrak{sl}_2[t]$ subalgebra of $\mathfrak g[t]$ corresponding to the 
root $\alpha$, we see that $$x_\alpha(r,s) w_{w_0\mu} =0,  \quad  
\text{if } r+s \geq 1+rp-w_{0}(\mu)(h_{\alpha})-p\qquad \forall \, r\geq 0. $$ 
Hence the result follows. \endproof

\begin{remark}\label{lowest.wt.W.mu.rem} Applying the $\mathfrak{sl}_{n+1}[t]$-isomorphism that maps $$y_{\alpha}\otimes t^s \mapsto x_{\alpha}\otimes t^s , \quad x_{\alpha}\otimes t^s \mapsto y_{\alpha}\otimes t^s, \quad h_{\alpha}\otimes t^s \mapsto -h_{\alpha}\otimes t^s, \quad \forall s \geq 0, \, \alpha\in \Phi^+,$$ on the generators and relations of the CV-module realization of the local Weyl module $W_{loc}(\mu)$, we observe that by \corref{lowest.wt.W.mu} there exists an $\mathfrak{sl}_{n+1}[t]$-module isomorphism between  $W_{loc}(\mu)$ and $\bu(\mathfrak n^+[t])w_{w_0\mu}.$ 
\end{remark}
 
\subsection{} We now consider the tensor product of two local Weyl modules.

\begin{lemma}\label{generator of tensor}    For $\lambda =\sum\limits_{i=1}^n m_i \omega_i , \mu = \sum\limits_{i=1}^n k_i \omega_i$ in $P^{+}$, let $W_{loc}(\lambda)$ and $W_{loc}(\mu)$ be the local Weyl modules with highest weights $\lambda$ and $\mu$ respectively. Then,
\begin{equation} \label{dimension of tensor}
        \dim W_{loc}(\lambda) \otimes W_{loc}(\mu) = \prod_{i=1}^{n} {n+1 \choose i}^{m_{i}+k_i }
    \end{equation} 
Further, as a $\mathfrak{sl}_{n+1}[t]$-module $W_{loc}(\lambda) \otimes W_{loc}(\mu)$ is generated by the vector $w_{\lambda}\otimes w_{w_0\mu}$, where $w_\lambda$  is the highest weight generator of $W_{loc}(\lambda)$ and $w_{w_0\mu}$ the lowest weight generator of $W_{loc}(\mu)$ (as defined in \eqref{w_w_0}).   
\end{lemma}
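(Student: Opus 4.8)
The plan is to establish the two assertions separately. The dimension formula \eqref{dimension of tensor} is immediate: by \lemref{CL.W(lambda).dim}, $\dim W_{loc}(\lambda) = \prod_{i=1}^n \binom{n+1}{i}^{m_i}$ and $\dim W_{loc}(\mu) = \prod_{i=1}^n \binom{n+1}{i}^{k_i}$, and since the dimension of a tensor product of vector spaces is the product of the dimensions, multiplying these gives the claim. So the real content is the generation statement.

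For the generation statement, I would argue as follows. Let $N$ be the $\mathfrak{sl}_{n+1}[t]$-submodule of $W_{loc}(\lambda)\otimes W_{loc}(\mu)$ generated by $w_\lambda\otimes w_{w_0\mu}$; the goal is to show $N$ equals the whole tensor product. First, by \remref{lowest.wt.W.mu.rem}, $W_{loc}(\mu)$ is isomorphic as an $\mathfrak{sl}_{n+1}[t]$-module to $\bu(\mathfrak n^+[t])w_{w_0\mu}$, i.e. $W_{loc}(\mu)$ is generated over $\bu(\mathfrak n^+[t])$ by its lowest weight vector $w_{w_0\mu}$. Dually, $W_{loc}(\lambda)$ is generated over $\bu(\mathfrak n^-[t])$ by its highest weight vector $w_\lambda$. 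The idea is to use the coproduct structure on $\mathfrak g[t]$: acting on $w_\lambda\otimes w_{w_0\mu}$ by elements of $\mathfrak n^-[t]$ moves the first tensor factor while also disturbing the second, and acting by $\mathfrak n^+[t]$ moves the second factor while disturbing the first. A triangularity/filtration argument should then show that $N$ contains $W_{loc}(\lambda)\otimes w_{w_0\mu}$ and $w_\lambda\otimes W_{loc}(\mu)$, and more generally, by induction on an appropriate weight or grade filtration, all of $W_{loc}(\lambda)\otimes W_{loc}(\mu)$.

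Concretely, I would proceed in two steps. Step one: show $w_\lambda\otimes W_{loc}(\mu)\subseteq N$. Using $W_{loc}(\mu) = \bu(\mathfrak n^+[t])w_{w_0\mu}$, it suffices to show that for any monomial $u\in\bu(\mathfrak n^+[t])$ we have $w_\lambda\otimes u\,w_{w_0\mu}\in N$. Applying the same monomial $u$ (built from $x_\alpha\otimes t^r$'s) to $w_\lambda\otimes w_{w_0\mu}$ via the coproduct, the term where all factors hit the second tensor slot is exactly $w_\lambda\otimes u\,w_{w_0\mu}$ (since $\mathfrak n^+[t]$ annihilates $w_\lambda$ except the degree-zero Cartan part, which only rescales); hence induction on the length/degree of $u$, subtracting off the lower-order cross terms which by the inductive hypothesis lie in $N$, yields $w_\lambda\otimes u\,w_{w_0\mu}\in N$. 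Step two: show $W_{loc}(\lambda)\otimes v\subseteq N$ for every $v\in W_{loc}(\mu)$, by a symmetric argument — fix $v$, and for $u'\in\bu(\mathfrak n^-[t])$ act by $u'$ on $w_\lambda\otimes v$; the leading term is $u'w_\lambda\otimes v$ and the cross terms involve $\mathfrak n^-[t]$ acting on $v$, which one controls by a downward induction on the weight of $v$ (weights below $w_0\mu$ do not occur, giving the base case). Combining steps one and two, $N$ contains $W_{loc}(\lambda)\otimes W_{loc}(\mu)$.

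The main obstacle is making the triangularity bookkeeping in step two genuinely rigorous: when $u'\in\bu(\mathfrak n^-[t])$ is applied to $w_\lambda\otimes v$, the cross terms have the form $(\text{part of }u')w_\lambda\otimes(\text{rest of }u')v$, and one must ensure these are already in $N$ before concluding $u'w_\lambda\otimes v\in N$. This requires choosing the right induction — I expect a double induction, outer on the $\mathfrak h$-weight of $v$ (ordered so that $w_0\mu$ is minimal, using that $W_{loc}(\mu)$ has no weights strictly below $w_0\mu$) and inner on the grade/length of $u'$ — together with the observation from step one that the entire slice $w_\lambda\otimes W_{loc}(\mu)$, and in particular $w_\lambda\otimes v$ for the relevant higher-weight $v$'s, is already available. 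Once the filtration is set up correctly the individual steps are routine applications of the coproduct formula from \secref{tensor section} and the generation statements for $W_{loc}(\lambda)$ and $W_{loc}(\mu)$.
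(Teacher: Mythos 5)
Your route works in outline, but the paper takes the mirror-image order of steps, which sidesteps the weight-downward induction you flag as the main obstacle in your step two. The paper first fills in the slice $W_{loc}(\lambda)\otimes w_{w_0\mu}$, not $w_\lambda\otimes W_{loc}(\mu)$: since $\mathfrak n^-[t]\,w_{w_0\mu}=0$ (Corollary~\ref{lowest.wt.W.mu}), acting by $\bu(\mathfrak n^-[t])$ on $w_\lambda\otimes w_{w_0\mu}$ produces no cross terms and, because $\bu(\mathfrak n^-[t])w_\lambda=W_{loc}(\lambda)$, immediately yields $W_{loc}(\lambda)\otimes w_{w_0\mu}\subseteq S$. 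Then for any $v\in W_{loc}(\lambda)$ the cross term in $(x_\alpha\otimes t^r)(v\otimes w_{w_0\mu})$ is $(x_\alpha\otimes t^r)v\otimes w_{w_0\mu}$, which already lies in the slice just established; subtracting it gives $v\otimes(x_\alpha\otimes t^r)w_{w_0\mu}\in S$, and a single induction on the length of a monomial $X\in\bu(\mathfrak n^+[t])$, with the inductive hypothesis quantified over \emph{all} $v\in W_{loc}(\lambda)$, finishes. The structural reason this is cleaner than your step two is that every cross term stays in a stratum already covered by the length-$(\ell-1)$ hypothesis (just with a different first-factor vector), whereas in your ordering the $\mathfrak n^-[t]$-action pushes the second factor to lower weights, forcing the extra outer induction on weight. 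Two small corrections to your step one as written: $\mathfrak n^+[t]$ annihilates $w_\lambda$ outright --- there is no ``degree-zero Cartan part'' inside $\mathfrak n^+[t]$ --- so the cross terms vanish identically and no induction is needed to get $w_\lambda\otimes W_{loc}(\mu)\subseteq N$; and once step two is set up as you describe, step one is in fact redundant, since the base case of the weight induction is exactly $w_{w_0\mu}$ and the claim for higher weights follows from the induction alone.
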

 \begin{proof} Note, \eqref{dimension of tensor} follows immediately from \lemref{CL.W(lambda).dim}. \\

\noindent Let $S$ be the submodule of $W_{loc}(\lambda) \otimes W_{loc}(\mu)$ generated by the vector $w_{\lambda}\otimes w_{w_0\mu}$. As $\mathfrak n^{-}[t]. w_{w_0\mu} = 0$, 
   $$ U(\mathfrak n^{-}[t])(w_{\lambda}\otimes w_{w_0\mu}) = U(\mathfrak n^{-}[t])w_{\lambda}\otimes w_{w_0\mu} = W_{loc}(\lambda) \otimes w_{w_0\mu} \subset S.$$  Hence for $r\in \mathbb{N}$ and $v \in W_{loc}(\lambda)$ we have,
   $$ x_{\alpha} \otimes t^r. v \otimes w_{w_0\mu} + v\otimes (x_{\alpha} \otimes t^r). w_{w_0\mu} = x_{\alpha} \otimes t^r. (v \otimes w_{w_0\mu}) \in S.$$
However, $ x_{\alpha} \otimes t^r. v \otimes w_{w_0\mu}\in W_{loc}(\lambda)\otimes w_{w_0\mu}\subset S$. It therefore follows that  $$ v\otimes (x_{\alpha} \otimes t^r). w_{w_0\mu} \in S, \qquad  \forall \, v\in W_{loc}(\lambda).$$ Now using the realization, $W_{loc}(\mu) = U(\mathfrak n^+[t])w_{w_0\mu}$ (\remref{lowest.wt.W.mu.rem}) and applying induction on the length of $X \in \bu(\mathfrak n^{+}[t])$ for all the basis elements $Xw_{w_0\mu}$ of $W_{loc}(\mu)$, we see that $W_{loc}(\lambda)\otimes W_{loc}(\mu) \subseteq S$. Thus the lemma follows. \end{proof}

%
%
%
%
%
%
%
%
%

\section{Pieri formulas}\label{pieri formulas}

\subsection{} In \cite{MR3443860}, I.G. Macdonald introduced a family of orthogonal symmetric polynomials  $P_{\lambda}(\boldsymbol{x}; q, t)$,  $\boldsymbol{x}= (x_1 , x_2 , \dots , x_{n+1})$ which form a basis for the ring of symmetric polynomials in $\mathbb{C}(q,t)[x_1 , \dots , x_{n+1}]$, where $\lambda$ is a partition with at most $n+1$ parts.  It was proved  in \cite{MR1771615},  that the character of level $1$ Demazure module of type $A$  is a specialized Macdonald polynomial and this result was  extended to other cases in \cite{MR1953294}.

Given a partition $(m_1,m_2,\cdots,m_{n+1})$ or equivalently $\lambda = \sum\limits_{i=1}^n(m_i-m_{i+1})\omega_i$, a dominant integral weight of $\mathfrak{sl}_{n+1}[t]$, it is known from results in \cite{MR2271991,MR1771615} that 
\begin{equation} \label{CL.ch} P_{(m_1,m_2,\cdots,m_{n+1})} (\boldsymbol{x};q,0) = \ch_{gr} W_{loc}(\lambda)
\end{equation} 
As Pieri formulas are used to determine products of Macdonald polynomials, in this section, we use them to obtain the graded character of the tensor product of two Weyl modules  of $\mathfrak{sl}_{n+1}[t]$.

\begin{lemma} \label{loc in loc} For $m,k\in \mathbb N$, the following hold :
\begin{itemize}
\item[i.]     
$$\ch_{gr} W_{loc}(m\omega_{1})\otimes W_{loc}(k\omega_{n}) = \sum\limits_{i=0}^{\text{min}\{m,k\}} \begin{bmatrix} m \\ i \end{bmatrix}_{q} \begin{bmatrix} k \\ i \end{bmatrix}_{q}(1-q)\cdots (1-q^{i}) \ch_{gr} W_{loc}((m-i)\omega_{1} + (k-i) \omega_{n})$$
\item[ii.] $$\ch_{gr} W_{loc}(m\omega_{1}) \otimes W_{loc}(k\omega_{1}) = \sum\limits_{i=0}^{\text{min}\{m,k\}} \begin{bmatrix} m \\ i \end{bmatrix}_{q} \begin{bmatrix} k \\ i \end{bmatrix}_{q}(1-q)\cdots (1-q^{i}) \ch_{gr} W_{loc}((m+k-2i)\omega_{1} + i \omega_{2})$$
\item[iii.] $$\ch_{gr} W_{loc}(m\omega_{n}) \otimes W_{loc}(k\omega_{n}) = \sum\limits_{i=0}^{\text{min}\{m,k\}} \begin{bmatrix} m \\ i \end{bmatrix}_{q} \begin{bmatrix} k \\ i \end{bmatrix}_{q}(1-q)\cdots (1-q^{i}) \ch_{gr} W_{loc}(i\omega_{n-1} + (m+k-2i) \omega_{n})$$
\end{itemize}
\end{lemma}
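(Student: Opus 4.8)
The strategy is to translate everything into the language of specialized Macdonald (Hall–Littlewood-type) polynomials via the identity $\ch_{gr} W_{loc}(\lambda) = P_{(m_1,\dots,m_{n+1})}(\bm{x};q,0)$ from \eqnref{CL.ch}, so that the three identities become statements about products $P_{(m,0,\dots,0)}\cdot P_{(k,0,\dots,0)}$, $P_{(m,0,\dots,0)}\cdot P_{(0,\dots,0,-k)}$ (equivalently the product with the dual, i.e. $P$ indexed by the conjugate/negated partition), etc. Since $\ch_{gr}$ of a tensor product is the product of graded characters by the definitions in \secref{tensor section}, the left-hand sides are literally such products. So the first step is: record that $W_{loc}(m\omega_1)$ has graded character $P_{(m,0,\dots,0)}(\bm{x};q,0)$ and $W_{loc}(k\omega_n)$ has graded character $P_{(k,k,\dots,k,0)}(\bm{x};q,0)$, up to the standard normalization that makes a partition out of a dominant weight of $\mathfrak{sl}_{n+1}$.

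Next I would invoke Macdonald's Pieri rule for the polynomials $P_\lambda(\bm{x};q,t)$ in the special cases where one factor is a single row $P_{(r)}$ (the "horizontal strip" Pieri rule). Multiplying $P_{(m)}$ by $P_{(k)}$ and expanding in the $P_\mu$ basis, then specializing $t=0$, collapses the general Pieri coefficients (which are products of the functions $\varphi_{\lambda/\mu}$, $\psi_{\lambda/\mu}$ built from $q,t$) to the explicit $q$-binomial coefficients times $(1-q)\cdots(1-q^i)$ displayed in the statement: this is exactly the known specialization of the Pieri coefficients at $t=0$. For part (ii), the resulting partitions are of the form $(m+k-i,\,i,\,0,\dots,0)$, which as an $\mathfrak{sl}_{n+1}$-weight is $(m+k-2i)\omega_1 + i\omega_2$, giving the claimed sum. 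For part (iii), by applying the diagram automorphism $i\mapsto n+1-i$ (equivalently conjugating with $w_0$, which sends $\omega_i\mapsto\omega_{n+1-i}$ and is a symmetry of $\ch_{gr}$ of local Weyl modules since $P_\lambda$ is symmetric), part (iii) follows formally from part (ii). For part (i), the product $P_{(m)}\cdot P_{(k,\dots,k,0)}$ is handled by the mixed Pieri rule (one factor a row, one factor a "column-like" shape $k\omega_n$): expand, specialize $t=0$, and identify the surviving terms as $P$ indexed by $(m-i)\omega_1 + (k-i)\omega_n$; alternatively, tensor with the dual module $W_{loc}(k\omega_n)\cong W_{loc}(k\omega_1)^{*}$ up to grading and reduce to a known Littlewood–Richardson-type product at $t=0$.

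The main obstacle I anticipate is bookkeeping the specialization $t=0$ of the Macdonald Pieri coefficients and matching normalization conventions: one must be careful that $P_\lambda(\bm x;q,0)$ is the correct normalization (the one agreeing with $\ch_{gr}W_{loc}$), that the dominant-weight-to-partition dictionary is applied consistently on both sides, and that the $q$-binomials $\begin{bmatrix} m \\ i \end{bmatrix}_q\begin{bmatrix} k \\ i \end{bmatrix}_q(1-q)\cdots(1-q^i)$ are precisely what the $t=0$ limit of $\varphi$/$\psi$ products yields in the single-row case. Once the $t=0$ Pieri coefficient is pinned down for a single-row multiplier, all three parts follow: (ii) directly, (iii) by the $w_0$-symmetry, and (i) either by the mixed Pieri rule or by passing to the dual module. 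I would also double-check that the sum ranges truncate correctly at $\min\{m,k\}$, which is automatic because the horizontal-strip condition forces the overlap parameter $i$ to be at most each of the two row lengths.
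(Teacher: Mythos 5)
Your plan follows essentially the same route as the paper: translate both sides to specialized Macdonald polynomials via $\ch_{gr}W_{loc}(\lambda)=P_{\lambda}(\boldsymbol{x};q,0)$, multiply by $g_m(\boldsymbol{x};q,0)$ using Macdonald's horizontal-strip Pieri rule, compute the $\phi_{\lambda/\mu}$ coefficients at $t=0$, and obtain part (iii) from part (ii) via the $-w_0$ symmetry. One small remark: for part (i) you do not need a ``mixed'' Pieri rule or a dual-module/Littlewood--Richardson detour --- the paper applies the very same single-row rule, adding a horizontal strip of size $m$ to the rectangle $\mu=(k,\dots,k,0)$ and reading off the coefficients from the arm/leg lengths, which is exactly the mechanism you already describe for part (ii).
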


\vspace{.35cm}

\subsection{} Set
 \begin{align*}
	(q;q)_{n} &= (1-q)(1-q^2 ) \dots (1-q^n ) \quad  \\
	(t;q)_n &= (1-t)(1-tq) \dots (1-tq^{n-1}) \quad \text{for } |q|<1 \\
	(t;q)_{\infty} &= \prod_{i=0}^{\infty}(1-tq^{i})
\end{align*}
Let  $g_{m}(\boldsymbol{x};q,t)$ denote the coefficient of $y^m $ in the power 
series expansion of the infinite product 
$$\prod_{i\geq 1} \frac{(tx_{i} y;q)_{\infty}}{(x_{i}y;q)_{\infty}} = \sum_{m \geq 0}g_{m}(\boldsymbol{x};q,t)y^{m}.$$
For any positive integer $m$, it has been shown in  \cite[Chapter VI, Equation (4.9)]{MR3443860} that : 
 \begin{equation}\label{gm equation}
P_{(m)}(\boldsymbol{x};q,t)= \frac{(q;q)_{m}}{(t;q)_{m}}g_{m}(\boldsymbol{x},q,t),
\end{equation}
and hence, 
\begin{equation} \label{P in terms of g}
P_{(m)}(\boldsymbol{x};q,0)= (q;q)_{m}g_{m}(\boldsymbol{x},q,0)
\end{equation}
\vspace{.15cm}

\noindent  Given a partition $\rho = (\rho_1\geq \rho_2\geq \cdots\geq \rho_{n+1} )$,  
the left-justified diagram having $n+1$ rows and $\rho_i$ boxes in the $i^{th}$ row for $1\leq i\leq n+1$, is said to be the diagram of the partition of $\rho$. For each box $s = (i,j)$ in the diagram of $\rho$,  let 
$$a_{\rho}(s) = \rho_i -j, \qquad l_{\rho}(s) = \rho_{j}' -i,$$ where $\rho_j'$ is equal to the number of boxes in the $j^{th}$ column of the diagram of $\rho$. We now  
recall the result on the Pieri formulas that is used to  compute the product of the specialized Macdonald polynomials.  
\begin{lemma}\cite[Theorem 6.24]{MR3443860} \label{PRule} 
   Let $m\in \mathbb Z_{+}$. Given partitions $\lambda$ and $\mu$ such that  
$\lambda \supset \mu$ and $\lambda - \mu = (m)$, we have,
    $$P_{\mu}(\boldsymbol{x};q,0) g_{m}(\boldsymbol{x};q,0) = \sum_{\lambda - \mu = (m)} \phi_{\lambda/ \mu} P_{\lambda}(\boldsymbol{x};q,0)$$ 
and the coefficients are given by 
$$ \phi_{\lambda / \mu} = \prod_{s \in C_{\lambda / \mu}} \frac{b_{\lambda}(s)}{b_{\mu}(s)}$$ 
where  $C_{\lambda /\mu}$ denotes the union of columns that intersect $\lambda-\mu$, and 
$$ b_{\lambda}(s) = b_{\lambda}(s;q,t) = \begin{cases} 
	 \frac{1-q^{a_{\lambda}(s)}t^{l_{\lambda}(s)+1}}{1- q^{a_{\lambda}(s)+1}t^{l_{\lambda}(s)}} & \quad \text{if } s \in \lambda, \\
       1 & \quad \text {otherwise} 
\end{cases} $$
\end{lemma}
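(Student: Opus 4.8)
The plan is to deduce the stated identity from the general Pieri rule for Macdonald polynomials by specializing the parameter $t$ to $0$; no argument independent of \cite{MR3443860} is needed, and the work consists entirely in matching conventions and checking that the $t\to 0$ limit is harmless.

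First I would recall from \cite[Chapter VI, (6.24)]{MR3443860} the Pieri rule in full generality: for a partition $\mu$ and $m\in\mathbb Z_+$,
$$P_{\mu}(\boldsymbol x;q,t)\,g_{m}(\boldsymbol x;q,t)=\sum_{\lambda}\varphi_{\lambda/\mu}(q,t)\,P_{\lambda}(\boldsymbol x;q,t),$$
where $g_{m}(\boldsymbol x;q,t)$ is the symmetric function whose generating series is $\prod_{i\ge 1}\frac{(tx_iy;q)_\infty}{(x_iy;q)_\infty}$ (so that $g_m=Q_{(m)}$), the sum ranges over all partitions $\lambda\supset\mu$ for which $\lambda/\mu$ is a horizontal $m$-strip, and
$$\varphi_{\lambda/\mu}(q,t)=\prod_{s\in C_{\lambda/\mu}}\frac{b_{\lambda}(s;q,t)}{b_{\mu}(s;q,t)},\qquad b_{\nu}(s;q,t)=\begin{cases}\dfrac{1-q^{a_{\nu}(s)}t^{l_{\nu}(s)+1}}{1-q^{a_{\nu}(s)+1}t^{l_{\nu}(s)}}&s\in\nu,\\[1mm]1&\text{otherwise},\end{cases}$$
with $C_{\lambda/\mu}$ the union of the columns meeting $\lambda/\mu$. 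Since the generating series displayed in \secref{pieri formulas} is precisely this one, the $g_m(\boldsymbol x;q,0)$ of the paper is the $t=0$ value of Macdonald's $g_m$, which is also what is recorded in \eqref{gm equation} and \eqref{P in terms of g}.

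Next I would carry out the specialization $t=0$ on both sides. The index set of the sum is a purely combinatorial condition on the pair $(\mu,\lambda)$ — namely $\lambda/\mu$ a horizontal $m$-strip, written $\lambda-\mu=(m)$ in the statement — and does not involve $q,t$, so it is unchanged. For the coefficients, note that for any box $s$ of a partition $\nu$ one has $b_\nu(s;q,0)=1$ when $l_\nu(s)\ge 1$ and $b_\nu(s;q,0)=(1-q^{a_\nu(s)+1})^{-1}$ when $l_\nu(s)=0$; as $q$ is an indeterminate the factors $1-q^{a_\nu(s)+1}$ are units in $\mathbb Q(q)$, so each $\varphi_{\lambda/\mu}(q,t)$ specializes, with neither a pole nor a $0/0$ indeterminacy, to the finite rational function
$$\varphi_{\lambda/\mu}(q,0)=\prod_{s\in C_{\lambda/\mu}}\frac{b_{\lambda}(s;q,0)}{b_{\mu}(s;q,0)},$$
which is exactly the coefficient $\phi_{\lambda/\mu}$ of the statement (with $b_\nu(s)=b_\nu(s;q,0)$). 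Because the monomial-basis coefficients of $P_{\nu}(\boldsymbol x;q,t)$ are rational in $q,t$ and regular at $t=0$, specialization at $t=0$ is a ring homomorphism that may be applied termwise to the finite sum, and this produces the asserted identity.

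The only genuine subtlety, and the step I would present most carefully, is the compatibility of conventions: verifying that the $g_m$ defined by the infinite product in \secref{pieri formulas} is Macdonald's $g_m=Q_{(m)}$ — so that \cite[Chapter VI, (6.24)]{MR3443860} applies verbatim — and confirming that the $t=0$ specialization commutes with the right-hand sum, i.e. that no individual $\varphi_{\lambda/\mu}$ degenerates. Both reduce to the observation that the relevant denominators are units in $\mathbb Q(q)$; once this is noted the proof is a direct transcription. (Alternatively, one could reprove the Pieri rule itself via the adjointness of multiplication by $g_m$ to the skewing operator for the Macdonald scalar product together with the skew Pieri rule for $Q_{\lambda/\mu}$, but this merely repackages Macdonald's own argument and offers no simplification over citing and specializing.)
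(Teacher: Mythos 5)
Your proposal is correct and matches the paper's treatment: the paper gives no proof at all, presenting the lemma purely as a citation of Macdonald's Pieri rule \cite[Ch.~VI, (6.24)]{MR3443860} already specialized at $t=0$. Your write-up simply makes explicit the routine verification that the specialization $t\to 0$ is harmless (each $b_\nu(s;q,0)$ being $1$ or $(1-q^{a_\nu(s)+1})^{-1}$ according as $l_\nu(s)\ge 1$ or $l_\nu(s)=0$), which is exactly what the paper's subsequent computations in the proof of Lemma~\ref{loc in loc} implicitly rely on.
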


\subsection{Proof of \lemref{loc in loc}.} 
\begin{proof} 
(i). As \eqref{CL.ch} holds, we consider the product $P_{(m)}(\boldsymbol{x};q,0)P_{(k,k,\dots,k,0)}(\boldsymbol{x};q,0)$ to prove (i). Using \lemref{PRule} and \eqref{P in terms of g}, we have 
$$\begin{array}{l}
P_{(m)}(\boldsymbol{x};q,0)P_{(k,k,\dots,k,0)}(\boldsymbol{x};q,0) = (q;q)_{m}P_{(k,k,\dots,k,0)}(\boldsymbol{x};q,0)g_{m}(\boldsymbol{x};q,0) \\
\\
				= (q;q)_{m} \sum\limits_{\lambda - (k,k,\dots,k,0) = (m)} \phi_{\lambda/ (k,k,\dots,k,0)}(q,0) P_{\lambda}(\boldsymbol{x};q,0)\\ \\
				= (q;q)_{m} \sum\limits_{\lambda - (k,k,\dots,k,0) = (m)}\, \prod\limits_{s \in C_{\lambda / (k,k,\dots,k,0)}} \dfrac{b_{\lambda}(s;q,0)}{b_{(k,k,\dots,k,0)}(s;q,0)}\, P_{\lambda}(\boldsymbol{x};q,0) \end{array}$$ 

\noindent Observe  that $\lambda-(k,k,\cdots,k,0)=(m)$ if and only if $\lambda = (k+m-i,k,k\cdots,k,i)$ for some $i\leq \min\{m, k\}$ and for the partition 
$\lambda(i)=(k+m-i,k,k\cdots,k,i)$, we have,    
$$ C_{\lambda(i)/(k,k,\cdots,k,0)} = \{(1,j), \text{ for } k\leq j\leq m-i\} \cup \{(r,l): 1\leq r\leq n+1, 1\leq l\leq i\}.$$
By the formula, $b_\lambda(s;q,0)=1$ whenever $l_\lambda(s)\neq 0$. Since 
$$\begin{array}{ll}
l_{\lambda(i)}(s) >0, \quad &\text{ for } s=(r,j),\,  1\leq r\leq n,\\
l_{(k,k,\cdots,k,0)}(s) >0, \quad &\text{ for } s=(r,j), \, 1\leq r\leq n-1,
\end{array}$$ 
and 
 $$(a_{\lambda(i)}(s), l_{\lambda(i)}(s))  = \left\{ \begin{array}{ll} (m-i+k-k-r,0) & \text{ for } s=(1,k+r),\\ (i-l,0) & \text{ for } s=(n+1,l)\\
 \end{array} \right. $$
$$(a_{(k,k,\cdots,k,0)}(s), l_{(k,k,\cdots,k,0)})  =  (k-l,0) \quad \text{ for } s=(n,l)\, 1\leq l\leq i, $$
it follows that 
$$\phi_{\lambda(i)/(k,k,\cdots,k,0)} =  \dfrac{(1-q^k)(1-q^{k-1} ) \dots (1-q^{k-i+1} )}{(1-q)(1-q^2 ) \dots (1-q^{m-i} )(1-q)(1-q^2 )\cdots (1-q^i)}.$$
Hence, $$(q;q)_{m}\, \phi_{\lambda(i)/(k,k,\cdots,k,0)} = \begin{bmatrix} k\\ i \end{bmatrix}_{q} \begin{bmatrix} m\\ i \end{bmatrix}_{q} 
				(1-q)(1-q^2 ) \dots (1-q^i ).$$
and  
\begin{align*}P_{(m)}(\boldsymbol{x};q,0)P_{(k,\cdots,k,0)}(\boldsymbol{x};q,0)
= \sum\limits_{i=0}^{\min\{m,k\}} \begin{bmatrix} k \\ i \end{bmatrix}_q \begin{bmatrix} m \\ i \end{bmatrix}_q (1-q)\dots (1-q^i )  P_{(k+m-i,k,\dots,k,i)}(\boldsymbol{x};q,0). \end{align*} This completes the proof of (i), in adjunction with \eqref{CL.ch}.
\vspace{.15cm}
    
\noindent (ii). Assume $m \geq k$. To prove (ii), we consider the product $P_{(m)}(\boldsymbol{x};q,0)P_{(k)}(\boldsymbol{x};q,0)$. Using 
\lemref{PRule} and \eqref{P in terms of g}, we have 
$$\begin{array}{l}
P_{(m)}(\boldsymbol{x};q,0)P_{(k)}(\boldsymbol{x};q,0)
= (q;q)_{k}P_{(m)}(\boldsymbol{x};q,0)g_{k}(\boldsymbol{x};q,0) \\
\\
				= (q;q)_{k} \sum\limits_{\mu - (m) = (k)}
\phi_{\mu/ (m)}(q,0) P_{\mu}(\boldsymbol{x};q,0)
				= (q;q)_{k} \sum\limits_{\mu - (m) = (k)}\, \prod\limits_{s \in C_{\mu / (m)}} \dfrac{b_{\mu}(s;q,0)}{b_{(m)}(s;q,0)}\, P_{\lambda}(\boldsymbol{x};q,0) \end{array}$$ 
Now observe that $\mu-(m)=(k)$ if and only if $\mu = (m+k-i,i,0,\cdots)$ for some $0\leq i\leq k$ and for the partition $\mu(i) = (m+k-i,i,0\cdots),$
$$C_{\mu(i)/(m)} = \{(1,m+r), \text{ for } 1\leq r\leq k-i\}\cup \{(r,j), \text{ for } 1\leq j\leq i, 1\leq r\leq 2\}$$
Since, $l_{\mu(i)}(s)>0$, for $s=(1,j)$, $1\leq j\leq i$, 
 $$(a_{\mu(i)}(s), l_{\mu(i)}(s))  = \left\{ \begin{array}{ll} (k-i-r,0) & 
\text{ for } s=(1,m+r),\, 1\leq r\leq k-i\\ (i-j,0) & \text{ for } s=(2,j), \, 
1\leq j\leq i\\  \end{array} \right. $$
and $$(a_{(m)}(s), l_{(m)}(s))  =  (m-j,0) \quad \text{ for } s=(1,j), \, 1\leq j\leq i, $$ substituting the values for $b_{\mu(i)}(s;q,0)$ and  $b_{(m)}(s;q,0)$ we get,
$$\phi_{\mu(i)/(m)} = \dfrac{(1-q^m)(1-q^{m-1})\cdots(1-q^{m-i+1})}{(1-q)(1-q^2)\cdots(1-q^{k-i})(1-q)\cdots(1-q^i)}.$$
Hence $$(q;q)_{k} \phi_{\mu(i)/(m)} = \begin{bmatrix} k\\ i \end{bmatrix}_{q} \begin{bmatrix} m\\ i \end{bmatrix}_{q} 
				(1-q)(1-q^2 ) \dots (1-q^i ), $$
and 
\begin{align*}P_{(m)}(\boldsymbol{x};q,0)P_{(k)}(\boldsymbol{x};q,0)
= \sum\limits_{i=0}^{\min\{m,k\}} \begin{bmatrix} k \\ i \end{bmatrix}_q \begin{bmatrix} m \\ i \end{bmatrix}_q (1-q)\dots (1-q^i )  P_{(m+k-i,i,0,\cdots,0)}(\boldsymbol{x};q,0) \end{align*}
Part (ii) follows using \eqref{CL.ch}.

   \vspace{.15cm}

\noindent (iii). The proof of (iii) is derived from (ii) using automorphism $-w_{0}: \mathfrak{h}_{\mathbb{R}}^{*} \mapsto \mathfrak{h}_{\mathbb{R}}^{*}$ which acts on the dual space of Cartan subalgebra $\mathfrak h$ of $\mathfrak g$, 
sending $\omega_{i} \mapsto \omega_{n+1-i}$. \end{proof}

%
%
%
%
%
%

\section{ Truncated Weyl modules and Filtration of $W_{loc}(m\omega_1)\otimes W_{loc}(k\omega_2)$}

In this section we recall the definition of truncated Weyl modules and establish that for $\mathfrak g=\mathfrak{sl}_3(\mathbb C)$, the tensor product module $W_{loc}(m\omega_1)\otimes W_{loc}(k\omega_2)$ has a filtration by truncated Weyl modules. 

\subsection{} For $n\geq 1$, let $\mathcal A_n = \dfrac{\mathbb C[t]}{(t^n)}$. It was shown in \cite[Lemma 2.2]{CFK} that the truncated current algebra 
 $\mathfrak g\otimes \mathcal A_n$, associated to a  finite-dimensional simple Lie algebra $\mathfrak g$,  is isomorphic to the graded quotient $\dfrac{\mathfrak g\otimes \mathbb C[t]}{\mathfrak g\otimes t^n\mathbb C[t]}$. For $\lambda \in P^+$, the truncated Weyl module $W_{n}(\lambda)$ is defined as the local Weyl module for the truncated current algebra $\mathfrak g\otimes \mathcal A_n$. As $y_\theta$ generates $\mathfrak g$ as a $\bu(\mathfrak g)$-module, it is easy to see that $W_n(\lambda)$ can be naturally considered as the quotient of the local Weyl module $W_{loc}(\lambda)$ as follows : 
 $$W_n(\lambda) \cong W_{loc}(\lambda)/ <(y_{\theta} \otimes t^n)w_\lambda>.$$
 
\vspace{.15cm}


\begin{theorem} Let $\mathfrak g = \mathfrak{sl}_3(\mathbb C)$ and $\lambda=m_1\omega_1+m_2\omega_2$ be a dominant integral weight. Set $|\lambda|=m_1+m_2$, 
$M_{\lambda}=\max\{m_i: i=1,2\}$ and $L_{\lambda}=\min\{m_i: i=1,2\}$. 
\begin{itemize}
\item[i.] For $0\leq j\leq L_{\lambda},$
$$W_{|\lambda|-j}(\lambda) \cong_{\mathfrak{sl}_3[t]} V(\theta)^{\ast j} \ast W_{loc}(\lambda-j \theta).$$

\item[ii.] For $0\leq j < L_{\lambda}$, there exists a short exact sequence, $$0 \longrightarrow \tau_{|\lambda|-j-1} W_{|\lambda-\theta|-j}(\lambda- \theta) \overset{\phi^-}{\longrightarrow} W_{|\lambda|-j}(\lambda) \overset{\phi^+}{\longrightarrow} W_{|\lambda|-j-1}(\lambda) \longrightarrow 0.$$
\end{itemize}
\label{trunc}
\end{theorem}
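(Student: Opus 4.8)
\textbf{Proof proposal for Theorem~\ref{trunc}.}

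The plan is to establish both parts together by induction on $|\lambda|$, exploiting the fact that all three modules in sight are cyclic graded $\mathfrak{sl}_3[t]$-modules with well-understood defining relations, and that their graded dimensions can be read off from the Pieri formulas in \lemref{loc in loc} and from \lemref{CL.W(lambda).dim}. For part (i), I would first check directly that the fusion product $V(\theta)^{\ast j}\ast W_{loc}(\lambda-j\theta)$ satisfies the defining relations of the truncated Weyl module $W_{|\lambda|-j}(\lambda)$: it is generated by a highest-weight vector of weight $\lambda$, the relation $(y_{\theta}\otimes t^{|\lambda|-j})$ acts as zero follows from \lemref{fusion_def} (writing the action of $y_\theta\otimes t^{p}$ as $y_\theta\otimes\prod(t-a_i)$ and noting the product over all $|\lambda|-j+1\geq j+1$ evaluation points, combined with the Garland-type relations \eqref{G.eq} governing $W_{loc}(\lambda-j\theta)$ at its top, kills this element), and the $\mathfrak{sl}_3$-integrability relations are inherited. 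This gives a surjection $W_{|\lambda|-j}(\lambda)\twoheadrightarrow V(\theta)^{\ast j}\ast W_{loc}(\lambda-j\theta)$. To upgrade it to an isomorphism I would compare graded (or at least ungraded) characters: the right-hand side has dimension $(\dim V(\theta))^{j}\cdot\dim W_{loc}(\lambda-j\theta)=8^{j}\cdot 3^{m_1-j}3^{m_2-j}$, and I would need a matching upper bound $\dim W_{|\lambda|-j}(\lambda)\leq 8^{j}3^{m_1-j}3^{m_2-j}$; this upper bound is exactly what the short exact sequence in (ii) is designed to produce inductively.

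So the core of the argument is part (ii), and I would prove it as follows. The surjection $\phi^+\colon W_{|\lambda|-j}(\lambda)\twoheadrightarrow W_{|\lambda|-j-1}(\lambda)$ is tautological since imposing the extra relation $(y_\theta\otimes t^{|\lambda|-j-1})w_\lambda=0$ only adds relations. The subtle map is $\phi^-$: I want to identify the kernel of $\phi^+$ with a grade-shifted copy of the truncated Weyl module $W_{|\lambda-\theta|-j}(\lambda-\theta)=W_{|\lambda|-2-j}(\lambda-\theta)$ (here $|\lambda-\theta|=|\lambda|-2$). The kernel is generated, as a $\mathfrak{sl}_3[t]$-submodule, by the image of $(y_\theta\otimes t^{|\lambda|-j-1})w_\lambda$, a vector of weight $\lambda-\theta$ sitting in graded degree $|\lambda|-j-1$. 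I would show this vector satisfies the defining relations of $\tau_{|\lambda|-j-1}W_{|\lambda|-2-j}(\lambda-\theta)$: the highest-weight/$\mathfrak n^+[t]$-annihilation and Cartan relations follow from a direct bracket computation using \eqref{[y_i_j]} and the relations \eqref{loc.Weyl.2.5} in $W_{loc}(\lambda)$ (the point being that $x_\alpha\otimes t^r$ applied to $y_\theta\otimes t^{p}\cdot w_\lambda$ either lands back in $\mathfrak n^+[t]\cdot w_\lambda=0$ or, for $\alpha$ a simple root, produces a lower-power $y$ times $w_\lambda$ that vanishes by a Garland relation once $p$ is large enough); the $\mathfrak{sl}_3$-integrability relations $(y_{\alpha_i}\otimes 1)^{(\lambda-\theta)(h_{\alpha_i})+1}$ annihilating the vector follow from the corresponding relations in $W_{loc}(\lambda)$ together with $\mathfrak{sl}_2$-theory for the root $\alpha_i$; and the truncation relation $(y_\theta\otimes t^{|\lambda|-2-j})$ annihilating it reduces, after commuting, to the relation $(y_\theta\otimes t^{|\lambda|-j-1})^{2}w_\lambda$-type vanishing, i.e.\ to $y_\theta(r,s)w_\lambda=0$ for the appropriate $(r,s)$, which holds in $W_{loc}(\lambda)$ (this is where the CV-presentation \eqref{xrs relation} of $W_{loc}(\lambda)$, with $\xi^{\theta}_\lambda=(1^{m_1+m_2})$, is used). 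This yields a surjection $\tau_{|\lambda|-j-1}W_{|\lambda|-2-j}(\lambda-\theta)\twoheadrightarrow \ker\phi^+$.

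Finally I would close the induction by a dimension count. By the inductive hypothesis applied to $\lambda-\theta$ (which has $|\lambda-\theta|=|\lambda|-2<|\lambda|$ and $L_{\lambda-\theta}=L_\lambda-1\geq j$), part (i) gives $\dim W_{|\lambda|-2-j}(\lambda-\theta)=8^{j}3^{m_1-1-j}3^{m_2-1-j}$, and similarly the inductive hypothesis gives $\dim W_{|\lambda|-j-1}(\lambda)=8^{j}3^{m_1-j}3^{m_2-j}$ (using the case $j+1\leq L_\lambda$ is not needed; rather one uses that $W_{|\lambda|-j-1}(\lambda)=W_{|\lambda|-(j+1)}(\lambda)$, covered by (i) at index $j+1$, when $j+1\le L_\lambda$, and the boundary is handled separately). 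Adding these two dimensions gives exactly $8^{j}3^{m_1-j}3^{m_2-j}\cdot\tfrac{1}{3}\cdot\tfrac{1}{3}\cdot(8+ \dots)$— more precisely $8^{j}3^{m_1-1-j}3^{m_2-1-j}(8 + 3\cdot 3)$, wait, one must instead observe $\dim W_{|\lambda|-j-1}(\lambda)+\dim\tau_{\bullet}W_{|\lambda|-2-j}(\lambda-\theta)\geq \dim W_{|\lambda|-j}(\lambda)$ from the exact sequence, combine with the reverse inequality from the surjections onto the known fusion product in (i), and conclude all surjections are isomorphisms and the sequence is exact. The main obstacle I anticipate is verifying cleanly that the generator of $\ker\phi^+$ satisfies \emph{exactly} the truncation relation of $W_{|\lambda-\theta|-j}(\lambda-\theta)$ and no stronger one — i.e.\ that the surjection $\tau_{|\lambda|-j-1}W_{|\lambda-\theta|-j}(\lambda-\theta)\to\ker\phi^+$ is injective — which is precisely why the dimension bookkeeping via (i) and \lemref{loc in loc} has to be run in parallel with the structural argument rather than after it.
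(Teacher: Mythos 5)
Your treatment of part (ii) -- taking the tautological surjection $\phi^+$, identifying $\ker\phi^+$ as the cyclic submodule generated by $(y_\theta\otimes t^{|\lambda|-j-1})w_{\lambda,j}$, verifying this generator satisfies the defining relations of $\tau_{|\lambda|-j-1}W_{|\lambda-\theta|-j}(\lambda-\theta)$ (highest-weight, Cartan, integrability, and the truncation relation via a Garland/CV identity $y_\theta(2,s)w_\lambda=0$), and then closing with a dimension count that uses (i) -- matches the paper's Lemmas~\ref{trunc1} and~\ref{trun2} quite closely, up to a small difference in whether the Garland relation is applied in $W_{|\lambda|-j}(\lambda)$ or pulled back from $W_{loc}(\lambda)$.

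The genuine gap is in your attempted inductive derivation of part (i) from part (ii). You correctly establish two surjections, yielding two inequalities: the fusion-product surjection gives $\dim W_{|\lambda|-j}(\lambda)\geq 8^{j}3^{|\lambda|-2j}$, and the surjection $\tau_{\bullet}W_{|\lambda-\theta|-j}(\lambda-\theta)\twoheadrightarrow\ker\phi^+$ (using the inductive hypothesis at the smaller weight $\lambda-\theta$) gives $\dim W_{|\lambda|-j}(\lambda)-\dim W_{|\lambda|-j-1}(\lambda)\leq 8^{j}3^{|\lambda|-2j-2}$. But writing $d_j=\dim W_{|\lambda|-j}(\lambda)$ with $d_0=3^{|\lambda|}$ known, the second family of inequalities, iterated down from $j=0$, yields precisely $d_j\geq 3^{|\lambda|}-\sum_{i<j}8^{i}3^{|\lambda|-2i-2}=8^{j}3^{|\lambda|-2j}$ -- the \emph{same} lower bound as the first inequality, with no upper bound at all. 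The two sources of information are not independent, and the induction does not close. To get the needed upper bound $d_j\leq 8^{j}3^{|\lambda|-2j}$ you must pin down the boundary value $d_{L_\lambda}$ (equivalently, prove (i) when $\lambda-j\theta$ is a multiple of a single fundamental weight) and then work backwards; you acknowledge that ``the boundary is handled separately'' but give no argument. That boundary case is not a minor loose end: in the paper it is exactly the content of \thmref{Main.thm.8} via the CV-module machinery of Section~\ref{M_j(m.n.theta)}, which the paper develops precisely because no shortcut through part~(ii) is available. In short, the paper proves (i) \emph{independently} and then derives (ii); your proposal inverts this and, as written, is circular.
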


\noindent We postpone the proof of part(i) of \thmref{trunc} till \secref{M_j(m.n.theta)} and assuming  \thmref{trunc}(i), proceed to prove part(ii) of the theorem. The following notation will be used in the proof.

\noindent {\bf{Notation}} : Given $r\in \mathbb N$ and a sequence with $r$ parts, $\bos =(s_1,s_2,\cdots,s_r)\in \mathbb Z_+^r$, henceforth, we denote the element $(y\otimes t^{s_1}) (y\otimes t^{s_2})\cdots(y\otimes t^{s_r})$ in $\bu(\mathfrak g[t])$ by $y(r,\bos)$.
 
\subsection{Proof of \thmref{trunc}(ii).}  
\begin{lemma} \label{trunc1} Given $\lambda\in P^+$, for $0 \leq j < \min\{\lambda(h_{i}): i =1,2\}$ there exist a surjective $\mathfrak{sl}_3[t]$-module homomorphism $\phi^+: W_{|\lambda|-j}(\lambda) \longrightarrow W_{|\lambda|-j-1}(\lambda)$ such that $$\ker \phi^+= \bu (\mathfrak g[t])(y_\theta \otimes t^{|\lambda|-j-1})w_{\lambda,j}, $$ where we denote by $w_{\lambda,j}$   the image of $w_\lambda$ in the truncated Weyl module $W_{|\lambda|-j}(\lambda)$.
\end{lemma}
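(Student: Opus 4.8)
The plan is to construct the map $\phi^+$ directly from the presentation of the truncated Weyl module and then identify its kernel. First I would recall that $W_{|\lambda|-j}(\lambda)$ is the quotient of $W_{loc}(\lambda)$ by the submodule generated by $(y_\theta\otimes t^{|\lambda|-j})w_\lambda$, and similarly $W_{|\lambda|-j-1}(\lambda)$ is the quotient by the submodule generated by $(y_\theta\otimes t^{|\lambda|-j-1})w_\lambda$. Since $|\lambda|-j-1<|\lambda|-j$, the defining relation of the smaller truncation is a stronger relation, so there is an obvious surjective $\mathfrak{sl}_3[t]$-module map $\phi^+\colon W_{|\lambda|-j}(\lambda)\twoheadrightarrow W_{|\lambda|-j-1}(\lambda)$ sending $w_{\lambda,j}\mapsto w_{\lambda,j+1}$ (the highest weight generators). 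The containment $\bu(\mathfrak g[t])(y_\theta\otimes t^{|\lambda|-j-1})w_{\lambda,j}\subseteq\ker\phi^+$ is then immediate, since that element maps to $(y_\theta\otimes t^{|\lambda|-j-1})w_{\lambda,j+1}=0$ in $W_{|\lambda|-j-1}(\lambda)$.

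The substance of the lemma is the reverse inclusion: $\ker\phi^+\subseteq \bu(\mathfrak g[t])(y_\theta\otimes t^{|\lambda|-j-1})w_{\lambda,j}$. Equivalently, writing $N$ for the submodule generated by $(y_\theta\otimes t^{|\lambda|-j-1})w_{\lambda,j}$ inside $W_{|\lambda|-j}(\lambda)$, I want to show the quotient $W_{|\lambda|-j}(\lambda)/N$ satisfies exactly the defining relations of $W_{|\lambda|-j-1}(\lambda)$, so that $\phi^+$ induces an isomorphism $W_{|\lambda|-j}(\lambda)/N\xrightarrow{\sim}W_{|\lambda|-j-1}(\lambda)$. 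The nontrivial point is that killing the single element $(y_\theta\otimes t^{|\lambda|-j-1})w_{\lambda,j}$ forces $(y_\alpha\otimes t^{|\lambda|-j-1})\cdot(\text{anything})$ to vanish for all $\alpha\in\Phi^+$ in the right degree — i.e. that $N$ already contains $\mathfrak g\otimes t^{|\lambda|-j-1}\mathbb C[t]\cdot w_{\lambda,j}$ in the appropriate sense. For this I would use that $y_\theta$ generates $\mathfrak g$ as a $\bu(\mathfrak g)$-module (as already invoked for the definition of truncated Weyl modules): applying $\bu(\mathfrak g)\otimes 1$ to $(y_\theta\otimes t^{|\lambda|-j-1})w_{\lambda,j}$ and using $[x\otimes 1, y_\theta\otimes t^{|\lambda|-j-1}]=[x,y_\theta]\otimes t^{|\lambda|-j-1}$ (plus the fact that $(\mathfrak n^+\otimes\mathbb C[t])w_{\lambda,j}=0$, so the extra term from the Leibniz rule kills $w_{\lambda,j}$ when we move a raising-type operator past it — here one must be careful and instead argue that the $\bu(\mathfrak g)$-submodule generated by the weight vector $(y_\theta\otimes t^{|\lambda|-j-1})w_{\lambda,j}$ maps onto $\mathfrak g\otimes t^{|\lambda|-j-1}\cdot w_{\lambda,j}$ modulo lower-order terms already in $N$), one gets $(\mathfrak g\otimes t^{|\lambda|-j-1})w_{\lambda,j}\subseteq N$. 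Then since $\mathfrak g\otimes t^{|\lambda|-j-1}\mathbb C[t]=\bu(\mathfrak g[t])\cdot(\mathfrak g\otimes t^{|\lambda|-j-1})$ together with the already-imposed truncation at $t^{|\lambda|-j}$, the quotient is annihilated by $\mathfrak g\otimes t^{|\lambda|-j-1}\mathbb C[t]$ acting on the generator, which is precisely the defining relation of $W_{|\lambda|-j-1}(\lambda)$.

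The main obstacle I anticipate is exactly this last algebraic manipulation: propagating the single vanishing $(y_\theta\otimes t^{|\lambda|-j-1})w_{\lambda,j}=0$ to the vanishing of all of $(\mathfrak g\otimes t^{|\lambda|-j-1})w_{\lambda,j}$ while controlling the correction terms produced by the bracket $[\,\cdot\,,y_\theta\otimes t^{|\lambda|-j-1}]$ acting on $w_{\lambda,j}$ — one must check these corrections lie in $N$, which may require a small induction on the height of the root $\alpha$ (applying $\mathrm{ad}(y_{\alpha_i}\otimes 1)$ successively to descend from $\theta$, noting each step only introduces terms of the form $(y_\beta\otimes t^{|\lambda|-j-1})w_{\lambda,j}$ with $\beta$ a lower root, already controlled inductively, together with terms where a Cartan/raising element hits $w_{\lambda,j}$, which vanish or reduce to the truncation relation). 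Once this is in hand, a dimension count via \lemref{CL.W(lambda).dim}-type estimates, or simply the universal property, confirms $\phi^+$ has no kernel beyond $N$, completing the proof.
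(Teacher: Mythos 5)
Your argument is correct and essentially matches the paper's approach, which dispatches the lemma in a single line as immediate from the definition of truncated Weyl modules as cyclic quotients of $W_{loc}(\lambda)$. One streamlining worth noting: since you already invoke the identification $W_n(\lambda)\cong W_{loc}(\lambda)/\bu(\mathfrak g[t])(y_\theta\otimes t^n)w_\lambda$, the reverse inclusion is automatic -- the single relation $(y_\theta\otimes t^{|\lambda|-j})w_\lambda = -\tfrac12(h_\theta\otimes t)(y_\theta\otimes t^{|\lambda|-j-1})w_\lambda$ (using $(h_\theta\otimes t)w_\lambda=0$) shows the two defining submodules of $W_{loc}(\lambda)$ are nested, so $\ker\phi^+$ is exactly the image of the larger one, namely $\bu(\mathfrak g[t])(y_\theta\otimes t^{|\lambda|-j-1})w_{\lambda,j}$, and the propagation-from-$y_\theta$, universal-property, and dimension-count steps you anticipate are all unnecessary.
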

\proof From the definition of the truncated Weyl modules it is clear that there exists a  $\mathfrak g[t]$-module homomorphism $\phi^+: W_{|\lambda|-j}(\lambda) \longrightarrow W_{|\lambda|-j-1}(\lambda)$ such that 
$$\phi^+(w_{\lambda,j}) = w_{\lambda,j-1} \qquad \text{ and }\qquad \ker \phi^+= \bu(\mathfrak g[t])(y_\theta \otimes t^{|\lambda|-j-1})w_{\lambda,j}.$$


\begin{lemma}\label{trun2}
 For $0 \leq j < \min\{\lambda(h_i): i=1,2\}$, there exist a $\mathfrak{sl}_3[t]$-module isomorphism $\phi^-: W_{|\lambda-\theta|-j}(\lambda- \theta) \longrightarrow \ker\phi^+$, where 
$\ker \phi^+= U(\mathfrak g[t])(y_\theta \otimes t^{|\lambda|-j-1})w_{\lambda,j}$.
\end{lemma}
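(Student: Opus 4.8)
The plan is to construct $\phi^-$ as the unique $\mathfrak{sl}_3[t]$-module map sending the highest weight generator $w_{\lambda-\theta}$ of $W_{|\lambda-\theta|-j}(\lambda-\theta)$ to the vector $(y_\theta\otimes t^{|\lambda|-j-1})w_{\lambda,j}\in\ker\phi^+$, and then to prove that this map is both well-defined and bijective. For well-definedness I would verify that $v:=(y_\theta\otimes t^{|\lambda|-j-1})w_{\lambda,j}$ satisfies the defining relations of $W_{|\lambda-\theta|-j}(\lambda-\theta)$: namely that it is a highest weight vector of weight $\lambda-\theta$ (the $\mathfrak h$-weight drop is exactly $-\theta$, and $(\mathfrak n^+\otimes\mathbb C[t])v=0$ follows because $x_\alpha\otimes t^r$ either kills $w_{\lambda,j}$ or lowers the $t$-power on $y_\theta$ past the truncation degree $|\lambda|-j$, using that $|\lambda|-j-1+r\geq|\lambda|-j$ when $r\geq1$; the case $r=0$ uses $[x_\alpha,y_\theta]\in\mathfrak n^+\cup\mathfrak h$ and the Weyl relations); that $(y_{\alpha_i}\otimes1)^{(\lambda-\theta)(h_{\alpha_i})+1}v=0$; and crucially that $(y_\theta\otimes t^{|\lambda-\theta|-j})v=(y_\theta\otimes t^{|\lambda|-j-2})(y_\theta\otimes t^{|\lambda|-j-1})w_{\lambda,j}=0$, which holds since this is a grade-$(2|\lambda|-2j-3)$ element and $2|\lambda|-2j-3\geq|\lambda|-j$ acting on $w_{\lambda,j}$ forces it into the truncation ideal — one has to be slightly careful here and instead argue via the commutator $(y_\theta\otimes t^{a})(y_\theta\otimes t^{b})w_{\lambda,j}$ being expressible so that the truncation relation $y_\theta\otimes t^{|\lambda|-j}=0$ applies after moving powers around, or use the $\mathfrak{sl}_2[t]$-theory (Lemma \ref{sl2.CV.op}) for the $\theta$-root string.

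Next I would establish surjectivity, which is essentially immediate: $\ker\phi^+$ is by Lemma \ref{trunc1} the cyclic module $\bu(\mathfrak g[t])v$, and $\phi^-$ maps onto $\bu(\mathfrak g[t])\phi^-(w_{\lambda-\theta})=\bu(\mathfrak g[t])v=\ker\phi^+$. The heart of the argument is injectivity, and I expect this to be the main obstacle. The strategy is a dimension count: from the short exact sequence in Lemma \ref{trunc1},
$$\dim\ker\phi^+=\dim W_{|\lambda|-j}(\lambda)-\dim W_{|\lambda|-j-1}(\lambda),$$
so it suffices to show $\dim W_{|\lambda-\theta|-j}(\lambda-\theta)=\dim W_{|\lambda|-j}(\lambda)-\dim W_{|\lambda|-j-1}(\lambda)$, since a surjection between finite-dimensional spaces of equal dimension is an isomorphism. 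To compute these dimensions I would invoke \thmref{trunc}(i) (which is assumed available), giving $W_{|\lambda|-j}(\lambda)\cong V(\theta)^{\ast j}\ast W_{loc}(\lambda-j\theta)$, hence its dimension is $\dim V(\theta)^j\cdot\dim W_{loc}(\lambda-j\theta)$; similarly for the other two truncated modules. Then the required identity reduces, via Lemma \ref{CL.W(lambda).dim} (the product formula $\dim W_{loc}(\sum m_i\omega_i)=\prod\binom{n+1}{i}^{m_i}$, here with $n=2$ so $\binom{3}{1}=\binom{3}{2}=3$), to a purely arithmetic identity among powers of $3$ and $\dim V(\theta)=8$. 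Writing $|\lambda|-j\theta=(m_1-j)\omega_1+(m_2-j)\omega_2$ and $\lambda-\theta-j\theta=(m_1-1-j)\omega_1+(m_2-1-j)\omega_2$, one gets $\dim W_{loc}(\lambda-j\theta)=3^{m_1+m_2-2j}$ and the telescoping $3^{m_1+m_2-2j}-3^{m_1+m_2-2j-1}=2\cdot3^{m_1+m_2-2j-1}$; matching this against $\dim W_{|\lambda-\theta|-j}(\lambda-\theta)=8^j\cdot3^{m_1+m_2-2-2j}$ after cancelling the common factor $8^j$ — wait, the factors of $V(\theta)^{\ast j}$ must be handled consistently, so I would instead note $8^j$ appears on both sides and it reduces to $2\cdot3^{m_1+m_2-2j-1}=3^{m_1+m_2-2j}-3^{m_1+m_2-2j-1}$, a true identity. (One must double-check the bookkeeping of which modules carry $j$ versus $j+1$ copies of $V(\theta)$; the short exact sequence in \thmref{trunc}(ii) already encodes this, with the $\tau$-shift fixing the grading.)

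An alternative to the dimension count, should the arithmetic bookkeeping prove delicate, is to construct an explicit inverse using the POP basis of \propref{popth}: the basis $\mathfrak B(\lambda-\theta)$ of $W_{loc}(\lambda-\theta)$ descends to a spanning set of $W_{|\lambda-\theta|-j}(\lambda-\theta)$, and one shows that $\phi^-$ carries this spanning set to a linearly independent subset of $\ker\phi^+\subset W_{loc}(\lambda)\otimes(\text{trunc})$ by tracking leading terms with respect to the total order on $\mathfrak P(\lambda)$ — multiplication by $(y_\theta\otimes t^{|\lambda|-j-1})$ shifts a POP in a controlled, order-preserving way. I regard the dimension-count route as cleaner and would present that, relegating the POP argument to a remark if needed. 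The whole proof of part (ii) then concludes by assembling Lemmas \ref{trunc1} and \ref{trun2} into the stated four-term exact sequence, with $\phi^-$ the composite $W_{|\lambda-\theta|-j}(\lambda-\theta)\xrightarrow{\ \sim\ }\ker\phi^+\hookrightarrow W_{|\lambda|-j}(\lambda)$ after the grade shift $\tau_{|\lambda|-j-1}$, which is exactly the degree of the generator $v$.
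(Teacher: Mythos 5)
Your approach is essentially the paper's: exhibit $v=(y_\theta\otimes t^{|\lambda|-j-1})w_{\lambda,j}$ as a highest-weight vector satisfying the defining relations of $W_{|\lambda-\theta|-j}(\lambda-\theta)$ (giving a surjection onto $\ker\phi^+$), then invoke \thmref{trunc}(i) for a dimension count to force the surjection to be an isomorphism. Two points, however, need repair.

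First, the dimension bookkeeping is off. You cannot cancel a common $8^j$ between $\dim W_{|\lambda|-j}(\lambda)$ and $\dim W_{|\lambda|-j-1}(\lambda)$: by \thmref{trunc}(i) these are $8^j\cdot 3^{|\lambda|-2j}$ and $8^{j+1}\cdot 3^{|\lambda|-2j-2}$ respectively, so their difference is $8^j\cdot 3^{|\lambda|-2j-2}(9-8)=8^j\cdot 3^{|\lambda|-2j-2}$, which equals $\dim W_{|\lambda-\theta|-j}(\lambda-\theta)=8^j\cdot 3^{|\lambda-\theta|-2j}$ exactly. The ``true identity'' $3^{a}-3^{a-1}=2\cdot 3^{a-1}$ you quote is not the one that appears; the relevant arithmetic reduces to $9-8=1$, not $3-1=2$. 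As written, the numbers you compare do not match, even though the correct bookkeeping does make the dimensions equal.

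Second, the key relation $(y_\theta\otimes t^{|\lambda|-j-2})(y_\theta\otimes t^{|\lambda|-j-1})w_{\lambda,j}=0$ (which is what forces $\ker\phi^+$ to be a quotient of $W_{|\lambda-\theta|-j}(\lambda-\theta)$ rather than merely of $W_{loc}(\lambda-\theta)$) does not follow from counting the total grade: the truncation ideal kills monomials in which some \emph{single} tensor factor has $t$-degree $\geq |\lambda|-j$, not monomials of high total grade. You flag the issue but do not close it. The paper's actual argument is via Garland's identity (\lemref{Garland equation}): from $(y_\theta\otimes 1)^{2|\lambda|-2j-1}w_{\lambda,j}=0$ (which uses $j<\min\{\lambda(h_i)\}$ to guarantee $2|\lambda|-2j-1\geq|\lambda|+1$) one applies $(x_\theta\otimes t)^{2|\lambda|-2j-3}$ and expands; after killing the terms containing $y_\theta\otimes t^{|\lambda|-j}$ by the truncation relation, only the desired product survives. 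This step must be supplied for the proof to be complete.
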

\proof By the defining relations of $W_{loc}(\lambda)$ we know, $y_{\alpha}\otimes t^s w_{\lambda} =0$ whenever $s\geq \lambda(h_\alpha).$ As $W_{|\lambda|-j}(\lambda)$ is a quotient of $W_{loc}(\lambda)$ and   $|\lambda|-j =\lambda(h_1)+\lambda(h_2)-j >\lambda(h_i)$ for $j<\lambda(h_i)$, we see that as $j< \min \{\lambda(h_i): i=1,2\}$,
$$x_{ii}\otimes t^r (y_\theta \otimes t^{|\lambda|-j-1})w_{\lambda,j} =  y_{\alpha_1+\alpha_2-\alpha_{i}}\otimes t^{|\lambda|-j-1+r}w_{\lambda,j}=0, \qquad \text{ for } i=1,2. $$
Further, using \eqref{loc.Weyl.2.5}, we have, $$ x_{\theta}\otimes t^r (y_\theta \otimes t^{|\lambda|-j-1})w_{\lambda,j} =  h_\theta\otimes t^{|\lambda|-j-1+r}w_{\lambda,j}=0, $$ and owing to the defining relations of the truncated Weyl module $W_{|\lambda|-j}(\lambda)$ we have,
$$h\otimes t^s( (y_\theta \otimes t^{|\lambda|-j-1})w_{\lambda,j}) = (\lambda-\theta)(h)\, \delta_{s,0} (y_\theta \otimes t^{|\lambda|-j-1})w_{\lambda,j}, \qquad \text{ for } s\in \mathbb Z_+.$$ From the universal property of local Weyl modules it follows that $\bu (y_\theta \otimes t^{|\lambda|-j-1})w_{\lambda,j}$ is a quotient of $W_{loc}(\lambda-\theta).$

Further, as  $j < \min\{\lambda(h_i):i=1,2\}$,    
$2(j+1)\leq \lambda(h_1)+\lambda(h_2)=|\lambda| $, implying 
$2|\lambda|-2j-1= |\lambda|+1 +(|\lambda|-2j-2) \geq |\lambda|+1.$ Hence by defining relation of $W_{|\lambda|-j}(\lambda)$, $(y_{\theta} \otimes 1)^{2|\lambda|-2j-1} w_{\lambda,j} =0$. Consequently,
     $$\begin{array}{ll}
            (x_{\theta} \otimes t)^{2|\lambda|-2j-3}(y_{\theta} \otimes 1)^{2|\lambda|-2j-1} w_{\lambda,j} =0,\\
         (y_{\theta} \otimes t^{|\lambda|-j-1})(y_{\theta} \otimes t^{|\lambda|-j-2}) w_{\lambda,j}+ (y_{\theta} \otimes t^{|\lambda|-j-3})(y_{\theta} \otimes t^{|\lambda|-j})w_{\lambda,j} =0\\
        (y_{\theta} \otimes t^{|\lambda|-j-2}) (y_{\theta} \otimes t^{|\lambda|-j-1}) w_{\lambda,j}  =0
     \end{array}$$
    Therefore it follows that $\bu(\mathfrak g[t])(y_{\theta} \otimes t^{|\lambda|-j-1})w_{\lambda,j}$ is a quotient of  $W_{|\lambda|-j-2}(\lambda-\theta)$. Now, using  \thmref{trunc}(i), we see on one hand, 
\begin{equation}
\begin{array}{ll}\dim \bu(\mathfrak g[t])(y_{\theta} \otimes t^{|\lambda|-j-1})w_{\lambda,j} &\leq \dim W_{|\lambda|-j-2}(\lambda-\theta) = \dim W_{|\lambda-\theta|-j}(\lambda-\theta)\\ 
& \leq (\dim V(\theta))^j\dim W_{loc}(\lambda-(j+1)\theta) = 8^j 3^{|\lambda|-2(j+1)}.\end{array} \label{dim.1}\end{equation}
On the other hand as $\bu(\mathfrak g[t])(y_{\theta} \otimes t^{|\lambda|-j-1})w_{\lambda,j} = \ker \phi^+$, it follows that $$\dim \bu(\mathfrak g[t])(y_{\theta} \otimes t^{|\lambda|-j-1})w_{|\lambda|,j} = \dim W_{|\lambda|-j}(\lambda)-\dim W_{|\lambda|-j-1}(\lambda).$$ Hence by \thmref{trunc}(i) it follows that ,
\begin{equation}\dim \bu(\mathfrak g[t])(y_{\theta} \otimes t^{|\lambda|-j-1})w_{\lambda,j} = 8^j 3^{|\lambda|-2j}-8^{j+1} 3^{|\lambda|-2(j+1)}= 8^j  3^{|\lambda|-2(j+1)}.\label{dim.2}\end{equation} From \eqref{dim.1} and \eqref{dim.2} we thus conclude that  $\ker\phi^- = \bu(\mathfrak g[t])(y_{\theta} \otimes t^{|\lambda|-j-1})w_{\lambda,j}$ is isomorphic to $W_{|\lambda-\theta|-j}(\lambda-\theta)$ as a $\mathfrak{sl}_3[t]$-module. 
\endproof

\noindent \thmref{trunc}(ii) is now an immediate consequence of \lemref{trunc1} and \lemref{trun2}.

\subsection{} Using \thmref{trunc}(ii) we now obtain the graded character of truncated Weyl modules of $\mathfrak{sl}_3[t]$ in terms of graded characters of local Weyl modules.
  
\begin{lemma} \label{t in local} For $\lambda\in P^+$, let $L_\lambda$ be as defined in \thmref{trunc}. For $0\leq j \leq L_{\lambda}$,
$$\ch_{gr} W_{|\lambda|-j}(\lambda)  = \sum\limits_{i=0}^{j} (-1)^{i} \begin{bmatrix} j \\ i \end{bmatrix}_{q} q^{i(|\lambda|-j) - i(i-1)/2} \ch_{gr} W_{loc}(\lambda-i\theta).$$ 
\end{lemma}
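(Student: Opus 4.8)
The plan is to prove the identity by induction on $j$, with the short exact sequence of \thmref{trunc}(ii) supplying the recursion and the $j=0$ case of \thmref{trunc}(i) supplying the base case. For brevity write $f(\lambda,j):=\ch_{gr}W_{|\lambda|-j}(\lambda)$, so that the goal is
$$f(\lambda,j)=\sum_{i=0}^{j}(-1)^i\begin{bmatrix}j\\i\end{bmatrix}_q q^{i(|\lambda|-j)-i(i-1)/2}\,\ch_{gr}W_{loc}(\lambda-i\theta),\qquad 0\le j\le L_\lambda .$$

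For the base case I would invoke \thmref{trunc}(i) with $j=0$: there are then no $V(\theta)$ factors, so $W_{|\lambda|}(\lambda)\cong W_{loc}(\lambda)$ and $f(\lambda,0)=\ch_{gr}W_{loc}(\lambda)$, which is precisely the right-hand side at $j=0$. For the recursion I would apply $\ch_{gr}$ to the short exact sequence of \thmref{trunc}(ii): since graded characters are additive on short exact sequences of graded modules and $\ch_{gr}\tau_sM=q^s\ch_{gr}M$, this gives, for $0\le j<L_\lambda$,
$$f(\lambda,j)=f(\lambda,j+1)+q^{|\lambda|-j-1}\,\ch_{gr}W_{|\lambda-\theta|-j}(\lambda-\theta).$$
Using $|\lambda-\theta|=|\lambda|-2$ and $L_{\lambda-\theta}=L_\lambda-1$, the last term equals $q^{|\lambda|-j-1}f(\lambda-\theta,j)$, so the recursion can be rewritten as
$$f(\lambda,j+1)=f(\lambda,j)-q^{|\lambda|-j-1}f(\lambda-\theta,j);$$
note that $j+1\le L_\lambda$ forces $j\le L_{\lambda-\theta}$, so the inductive hypothesis is available for both $f(\lambda,j)$ and $f(\lambda-\theta,j)$.

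In the inductive step I would substitute the inductive formula for $f(\lambda,j)$ and for $f(\lambda-\theta,j)$ into this recursion, reindex the sum coming from $f(\lambda-\theta,j)$ by $i\mapsto i-1$ so that both sums are expressed in terms of $\ch_{gr}W_{loc}(\lambda-i\theta)$ for $0\le i\le j+1$, and compare coefficients. After pulling the common factor $(-1)^iq^{i(|\lambda|-j-1)-i(i-1)/2}$ out of the coefficient of $\ch_{gr}W_{loc}(\lambda-i\theta)$, the contribution from $f(\lambda,j)$ reduces to $q^i\begin{bmatrix}j\\i\end{bmatrix}_q$ (because $i(|\lambda|-j)-i(|\lambda|-j-1)=i$) and the contribution from $q^{|\lambda|-j-1}f(\lambda-\theta,j)$ reduces to $\begin{bmatrix}j\\i-1\end{bmatrix}_q$ (its residual $q$-exponent collapsing to $0$ after a short computation that uses $\binom{i}{2}-\binom{i-1}{2}=i-1$). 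Hence the whole step comes down to the $q$-binomial Pascal identity $q^i\begin{bmatrix}j\\i\end{bmatrix}_q+\begin{bmatrix}j\\i-1\end{bmatrix}_q=\begin{bmatrix}j+1\\i\end{bmatrix}_q$, and the boundary indices $i=0$ and $i=j+1$ need no separate treatment thanks to the convention $\begin{bmatrix}j\\i\end{bmatrix}_q=0$ for $i\notin\{0,1,\dots,j\}$. This closes the induction.

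There is no genuine conceptual obstacle here: once \thmref{trunc}(ii) is in hand, the statement follows from a one-step recursion. The only thing requiring care is the bookkeeping of the powers of $q$ in the inductive step — tracking the shift $q^{|\lambda|-j-1}$, the reindexing of the second sum, and the exponents $i(|\lambda|-j)-i(i-1)/2$ — but, as indicated, this collapses cleanly onto $q$-Pascal, so the computation is routine.
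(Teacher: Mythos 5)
Your proposal is correct and follows essentially the same route as the paper: induct on $j$, extract the recursion $f(\lambda,j+1)=f(\lambda,j)-q^{|\lambda|-j-1}f(\lambda-\theta,j)$ from the short exact sequence of \thmref{trunc}(ii), reindex the second sum, and close via the $q$-Pascal identity $q^i\begin{bmatrix}j\\i\end{bmatrix}_q+\begin{bmatrix}j\\i-1\end{bmatrix}_q=\begin{bmatrix}j+1\\i\end{bmatrix}_q$. The paper's proof is the same computation (phrased with $j\mapsto j-1$), with the inductive hypothesis stated as applying to all $p<j$ and $\mu<\lambda$; your explicit check that $j+1\le L_\lambda$ implies $j\le L_{\lambda-\theta}$ is exactly the bookkeeping needed to make that hypothesis available.
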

\begin{proof} We prove the lemma by induction on $j$.\\
From the short exact sequence in \thmref{trunc}(ii) for $j=1$, 
$$0 \longrightarrow \tau_{|\lambda|-1}W_{loc}(\lambda-\theta) \longrightarrow W_{loc}(\lambda) \longrightarrow W_{|\lambda|-1}(\lambda)\longrightarrow 0$$
we get $\ch_{gr} W_{|\lambda|-1}(\lambda) = \ch_{gr}W_{loc}(\lambda) - q^{|\lambda|-1}\ch_{gr} W_{loc}(\lambda-\theta) ,$ which shows that the lemma holds for $j=1$.
Assume that we have proved the statement for all $p<j$ and for all dominant integral weights $\mu<\lambda$.  Using  the following short exact sequence from \thmref{trunc}(ii), 
$$0 \longrightarrow \tau_{|\lambda|-j} W_{|\lambda|-j-1}(\lambda-\theta) \longrightarrow W_{|\lambda|-j+1}(\lambda) \longrightarrow W_{|\lambda|-j}(\lambda) 
\longrightarrow 0$$ we get $$\ch_{gr} W_{|\lambda|-j}(\lambda) = 
\ch_{gr} W_{|\lambda|-(j-1)}(\lambda) -q^{|\lambda|-j} 
\ch_{gr} W_{|\lambda-\theta|-(j-1)}(\lambda-\theta). $$
Now using the induction hypothesis and the graded character for $W_{|\lambda-\theta|-j-1}(\lambda-\theta)$ we get,
\begin{align*}
\ch_{gr} W_{|\lambda|-j}(\lambda)&
 = \sum\limits_{i=0}^{j-1} (-1)^{i} \begin{bmatrix} j-1 \\ i \end{bmatrix}_{q} q^{i(|\lambda|-j+1) - \frac{i(i-1)}{2}} \ch_{gr} W_{loc}(\lambda-i\theta)\\& -q^{|\lambda|-j} \sum\limits_{i=0}^{j-1} (-1)^{i} \begin{bmatrix} j-1 \\ i \end{bmatrix}_{q} q^{i(|\lambda|-j-1) - \frac{i(i-1)}{2}} \ch_{gr} W_{loc}(\lambda-\theta-i\theta) \\
& = \sum\limits_{i=0}^{j-1} (-1)^{i} \begin{bmatrix} j-1 \\ i \end{bmatrix}_{q} q^{i(|\lambda|-j+1) - \frac{i(i-1)}{2}} \ch_{gr} W_{loc}(\lambda-i\theta)\\
& + q^{|\lambda|-j}\sum\limits_{i=1}^{j} (-1)^{i} \begin{bmatrix} j-1 \\ i-1 \end{bmatrix}_{q} q^{(i-1)(|\lambda|-j-1) - \frac{(i-2)(i-1)}{2}} \ch_{gr} W_{loc}(\lambda-\theta) \\
& = \ch_{gr} W_{loc}(\lambda) + (-1)^j q^{j(|\lambda|-j) - \frac{j(j-1)}{2}}\ch_{gr} W_{loc}(\lambda-j\theta)\\
& + \sum\limits_{i=1}^{j-1} (-1)^{i} (\begin{bmatrix} j-1 \\ i \end{bmatrix}_{q} q^i + \begin{bmatrix} j-1 \\ i-1 \end{bmatrix}_{q} ) q^{i(|\lambda|-j) - \frac{i(i-1)}{2}} \ch_{gr} W_{loc}(\lambda-i\theta)\\
& = \sum\limits_{i=0}^{j} (-1)^{i} \begin{bmatrix} j \\ i \end{bmatrix}_{q} q^{i(|\lambda|-j) - \frac{i(i-1)}{2}} \ch_{gr} W_{loc}(\lambda-i\theta).
\end{align*} This proves the lemma. \label{trunc.ch} \end{proof}

\subsection{} Recall the following polynomial identity in $x, q$ which is equivalent to the q-binomial theorem:
\begin{equation}  \sum_{r=0}^{j} (-1)^{j-r} \begin{bmatrix} j \\ r \end{bmatrix}_{q} q^{{ j-r \choose 2}} x^{r}  = (x-1) (x-q) \cdots (x-q^{j-1}) \label{q-binomial}
\end{equation} We shall now use it to obtain the graded character of the $\mathfrak{sl}_3[t]$-module  $W_{loc}(m\omega_{1}) \otimes W_{loc}(k\omega_{2})$ in terms of the graded character of truncated Weyl modules.

\begin{lemma}\label{t product} For $m,k\in \mathbb N$, let $M=\max\{m,k\}$ and $L=\min\{m,k\}$.
Then, \begin{align}\label{t.product}
\ch_{gr} W_{loc}(m\omega_{1}) \otimes W_{loc}(k\omega_{2})  = 
\sum\limits_{i=0}^{L} \begin{bmatrix} L \\ i \end{bmatrix}_{q} \ch_{gr} W_{M-i}(m\omega_1+k\omega_2-i\theta).\end{align}
\end{lemma}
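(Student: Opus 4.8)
The plan is to prove the character identity \eqref{t.product} purely combinatorially, by combining the Pieri-type expansion of the tensor product from \lemref{loc in loc}(i) with the expression for the character of a truncated Weyl module in terms of local Weyl modules from \lemref{trunc.ch}, and then matching coefficients of $\ch_{gr} W_{loc}((m-i)\omega_1+(k-i)\omega_2)$ (equivalently $\ch_{gr}W_{loc}(m\omega_1+k\omega_2-i\theta)$, since $\theta=\omega_1+\omega_2$ in $\mathfrak{sl}_3$, so $m\omega_1+k\omega_2-i\theta=(m-i)\omega_1+(k-i)\omega_2$). Assuming $m\geq k$ without loss of generality (so $M=m$, $L=k$), I would first substitute \lemref{trunc.ch} into the right-hand side of \eqref{t.product}: writing $|m\omega_1+k\omega_2-i\theta|=m+k-2i$, the term $\ch_{gr}W_{M-i}(m\omega_1+k\omega_2-i\theta)=\ch_{gr}W_{(m+k-2i)-(k-i)}(m\omega_1+k\omega_2-i\theta)$ expands, by \lemref{trunc.ch} with $j=k-i$ and $\lambda=m\omega_1+k\omega_2-i\theta$, as
$$\sum_{l=0}^{k-i}(-1)^l\begin{bmatrix}k-i\\ l\end{bmatrix}_q q^{l(m+k-2i-(k-i))-l(l-1)/2}\ch_{gr}W_{loc}(m\omega_1+k\omega_2-(i+l)\theta).$$
Multiplying by $\begin{bmatrix}L\\ i\end{bmatrix}_q=\begin{bmatrix}k\\ i\end{bmatrix}_q$ and summing over $0\le i\le k$, I would reindex with $r=i+l$ and collect the coefficient of $\ch_{gr}W_{loc}(m\omega_1+k\omega_2-r\theta)$, which becomes
$$\sum_{i=0}^{r}(-1)^{r-i}\begin{bmatrix}k\\ i\end{bmatrix}_q\begin{bmatrix}k-i\\ r-i\end{bmatrix}_q q^{(r-i)(m-i)-(r-i)(r-i-1)/2}.$$

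Next I would simplify the $q$-binomial product using the standard identity $\begin{bmatrix}k\\ i\end{bmatrix}_q\begin{bmatrix}k-i\\ r-i\end{bmatrix}_q=\begin{bmatrix}k\\ r\end{bmatrix}_q\begin{bmatrix}r\\ i\end{bmatrix}_q$, pulling $\begin{bmatrix}k\\ r\end{bmatrix}_q$ out front. The remaining inner sum is $\sum_{i=0}^{r}(-1)^{r-i}\begin{bmatrix}r\\ i\end{bmatrix}_q q^{(r-i)(m-i)-\binom{r-i}{2}}$, and the aim is to show this equals $\begin{bmatrix}m\\ r\end{bmatrix}_q\begin{bmatrix}k\\ r\end{bmatrix}_q^{-1}(1-q)\cdots(1-q^r)$ times the appropriate normalization — more precisely, after the dust settles, that the full coefficient of $\ch_{gr}W_{loc}(m\omega_1+k\omega_2-r\theta)$ equals $\begin{bmatrix}m\\ r\end{bmatrix}_q\begin{bmatrix}k\\ r\end{bmatrix}_q(1-q)\cdots(1-q^r)$, which is exactly the Pieri coefficient appearing in \lemref{loc in loc}(i). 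To evaluate the inner sum I would substitute $j\mapsto r-i$ to put it in the form $\sum_{j=0}^r(-1)^j\begin{bmatrix}r\\ j\end{bmatrix}_q q^{j(m-r+j)-\binom{j}{2}}$ and recognize it, via the exponent bookkeeping $j(m-r+j)-\binom{j}{2}=j(m-r)+\binom{j}{2}+ \text{(lower adjustments)}$, as an instance of the $q$-binomial theorem \eqref{q-binomial} with $x=q^{m-r+1}$ (up to an overall power of $q$); this produces the telescoping product $(q^{m-r+1}-1)(q^{m-r+1}-q)\cdots(q^{m-r+1}-q^{r-1})$, which after factoring equals $\pm q^{(\text{something})}(1-q)(1-q^2)\cdots(1-q^r)\big/\big((1-q)\cdots(1-q^{m-r})\big)\cdot(q;q)_m$-type expression, i.e. precisely $\begin{bmatrix}m\\ r\end{bmatrix}_q(q;q)_r$ up to sign which the $(-1)^{r-i}$ absorbs.

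Once the coefficient of each $\ch_{gr}W_{loc}(m\omega_1+k\omega_2-r\theta)$ on the right-hand side of \eqref{t.product} is shown to equal $\begin{bmatrix}m\\ r\end{bmatrix}_q\begin{bmatrix}k\\ r\end{bmatrix}_q(1-q)\cdots(1-q^r)$, the identity follows immediately by comparison with \lemref{loc in loc}(i) (using $r=i$ and $\theta=\omega_1+\omega_2$). The main obstacle I anticipate is purely bookkeeping: correctly tracking the $q$-powers in the exponents through the reindexing $r=i+l$ and the substitution into the $q$-binomial theorem, since the exponent $l(m+k-2i-(k-i))=l(m-i)$ depends on both summation indices and must be reorganized carefully so that it matches the shape required by \eqref{q-binomial}. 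I would handle this by an induction on $k=L$ as a cross-check — the $k=1$ base case reads $\ch_{gr}W_{loc}(m\omega_1)\otimes W_{loc}(\omega_2)=\ch_{gr}W_m(m\omega_1+\omega_2)+\begin{bmatrix}1\\1\end{bmatrix}_q\ch_{gr}W_{m-1}((m-1)\omega_1+0\cdot\omega_2)\cdot(\text{after using }\theta=\omega_1+\omega_2)$, which follows directly from \lemref{loc in loc}(i) and \lemref{trunc.ch}, and the inductive step would use the recursions already in hand rather than the closed-form $q$-binomial evaluation, sidestepping the messiest computation.
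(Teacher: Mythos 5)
Your plan is essentially identical to the paper's proof: both substitute the expansion of $\ch_{gr}W_{M-i}$ from \lemref{t in local} into the right-hand side, reindex so that one collects the coefficient of $\ch_{gr}W_{loc}(m\omega_1+k\omega_2-r\theta)$, factor the pair of $q$-binomials via $\begin{bmatrix}k\\ i\end{bmatrix}_q\begin{bmatrix}k-i\\ r-i\end{bmatrix}_q=\begin{bmatrix}k\\ r\end{bmatrix}_q\begin{bmatrix}r\\ i\end{bmatrix}_q$, evaluate the remaining alternating sum by the $q$-binomial theorem \eqref{q-binomial}, and then compare with the Pieri expansion in \lemref{loc in loc}(i). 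The one slip in your sketch is the proposed substitution $x=q^{m-r+1}$: writing the exponent as $j(m-r+j)-\binom{j}{2}=jm-\binom{r}{2}+\binom{r-j}{2}$ shows the correct specialization is $x=q^m=q^M$ (which, after pulling out $(-1)^r q^{-\binom{r}{2}}$, yields $(1-q^m)\cdots(1-q^{m-r+1})=\begin{bmatrix}m\\ r\end{bmatrix}_q(q;q)_r$ exactly), and this is what the paper uses; your own proposed inductive cross-check on $k$ would have caught this, so the gap is cosmetic rather than structural.
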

\proof From \lemref{trunc.ch},  we see that 
for $m',k'\in\mathbb N$ with $L'=\min\{m',k'\}$ we have
$$\ch_{gr} W_{m'+k'-L'}(m'\omega_{1} + k' \omega_{2}) = \sum\limits_{i=0}^{L'} (-1)^{i} \begin{bmatrix} L' \\ i \end{bmatrix}_{q} q^{i(m'+k'-L') - i(i-1)/2} \ch_{gr} W_{loc}(m'\omega_{1} + k'\omega_{2}-i\theta).$$ In particular 
for each $0\leq i\leq L$ we have
\begin{equation}
\begin{array}{ll}\ch_{gr} & W_{M-i}(m\omega_1+k\omega_2-i\theta) = \ch_{gr} W_{M-i}((m-i)\omega_{1} + (k-i) \omega_{2}) \\ &= \sum\limits_{r=0}^{L-i} (-1)^{r} \begin{bmatrix} L-i \\ r \end{bmatrix}_{q} q^{r(M-i) - r(r-1)/2} \ch_{gr} W_{loc}((m-i)\omega_{1} + (k-i)\omega_{2}-r\theta).\label{m-i.trunc}\end{array}\end{equation}
Using \eqref{m-i.trunc}, the right hand side of \eqref{t.product} can be written as :
\begin{equation}\begin{array}{ll} \sum\limits_{i=0}^{L} \begin{bmatrix} L \\ i \end{bmatrix}_{q} \ch_{gr} W_{M-i}(m\omega_1+k\omega_2-i\theta) =  \sum\limits_{i=0}^{L} \begin{bmatrix} L \\ L-i \end{bmatrix}_{q} \ch_{gr} W_{M-i}(m\omega_1+k\omega_2-i\theta) \\= \sum\limits_{i=0}^{L} \begin{bmatrix} L \\ L-i \end{bmatrix}_{q} \sum\limits_{r=0}^{L-i} (-1)^{r} \begin{bmatrix} L-i \\ r \end{bmatrix}_{q} q^{r(M-i) - r(r-1)/2} \ch_{gr} W_{loc}(m\omega_{1} + k\omega_{2}-(r+i)\theta).
\end{array}\label{subs.}\end{equation}
Note that the coefficient of $\ch_{gr} W_{loc}(m\omega_1+k\omega_2-j\theta)$ in \eqref{subs.} is equal to:
\begin{equation} \label{subs.1}
\sum\limits_{r=0}^j (-1)^{r} \begin{bmatrix} L \\ L-(j-r) \end{bmatrix}_{q}  \begin{bmatrix} L-(j-r) \\ r \end{bmatrix}_{q} q^{r(M-j-r)-\frac{r(r-1)}{2}} 
= \begin{bmatrix} L \\ j \end{bmatrix}_{q}  \sum\limits_{r=0}^j (-1)^{r} \begin{bmatrix} j \\ r\end{bmatrix}_{q}  q^{r(M-j-r)-\frac{r(r-1)}{2}} \end{equation}
On the other hand putting $x=q^M $ in \eqref{q-binomial} we get
 $$\sum_{r=0}^{j} (-1)^{j-r} \begin{bmatrix} j \\ r \end{bmatrix}_{q} q^{r M + { j-r \choose 2}}  = (-1)^{j} q^{\frac{j(j-1)}{2}}(1-q^M )  \cdots (1-q^{M-j+1} ),$$
which in turn gives, \begin{equation} \label{subs.2}
\begin{array}{ll}
\sum\limits_{r=0}^{j} (-1)^r \begin{bmatrix} j \\ r \end{bmatrix}_{q} q^{r(M-j+r) - \frac{r(r-1)}{2}}& = (1-q^M ) (1-q^{M-1}) \cdots (1-q^{M-j+1})\\ &= \begin{bmatrix} M \\ j \end{bmatrix}_{q} (1-q)(1-q^2) \cdots (1-q^j ).\end{array}\end{equation}
Substituting the coefficient of $W_{loc}(m\omega_1+k\omega_2-j\theta)$ in \eqref{subs.} using \eqref{subs.1} and \eqref{subs.2} we thus see that the right hand side of \eqref{t.product} is equal to :
$$ \sum\limits_{i=0}^L \begin{bmatrix} M \\ i \end{bmatrix}_{q} \begin{bmatrix} L \\ i \end{bmatrix}_{q}(1-q)\cdots (1-q^{i}) \ch_{gr} W_{loc}(m\omega_{1} + k\omega_2-i\theta),$$ which by \lemref{loc in loc} is equal to the  graded character of
$W_{loc}(m\omega_1)\otimes W_{loc}(k\omega_2)$. This completes the proof of the lemma.  \endproof

\vspace{.15cm}

\subsection{Filtration of $W_{loc}(m\omega_1)\otimes W_{loc}(k\omega_2)$} We now prove the main result of this section.
\begin{theorem} \label{filter.1.2}For $m,k\in \mathbb N$, let $M =\max\{m,k\}$ and $L=\min\{m,k\}$. The $\mathfrak{sl}_3[t]$-module $W_{loc}(m\omega_{1}) \otimes W_{loc}(k\omega_{2})$ has a filtration by $\mathfrak{sl}_3[t]$-submodules such that the successive quotients are isomorphic to truncated Weyl modules: 
    \begin{align*} 
    &\tau_{l_r}W_{M-r}((m-r)\omega_{1} + (k-r) \omega_{2}), \qquad \text{ for } 0 \leq r \leq L,\quad  0\leq l_r\leq (L-r)r. 
    \end{align*}
\end{theorem}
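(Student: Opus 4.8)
The plan is to turn the graded character identity of \lemref{t product} into an honest submodule filtration, by realizing each successive quotient as a quotient of the prescribed truncated Weyl module and then forcing all these surjections to be isomorphisms with a single dimension count. Expanding the Gaussian binomials as $\begin{bmatrix} L\\ r\end{bmatrix}_q=\sum_{\nu\subseteq r\times(L-r)}q^{|\nu|}$, the sum running over partitions $\nu$ fitting inside an $r\times(L-r)$ rectangle, \lemref{t product} reads
$$\ch_{gr}\big(W_{loc}(m\omega_1)\otimes W_{loc}(k\omega_2)\big)=\sum_{r=0}^{L}\ \sum_{\nu\subseteq r\times(L-r)}\ \ch_{gr}\,\tau_{|\nu|}W_{M-r}\big((m-r)\omega_1+(k-r)\omega_2\big).$$
So it suffices to build an increasing chain $0=W_{(0)}\subset W_{(1)}\subset\cdots\subset W_{(N)}=W_{loc}(m\omega_1)\otimes W_{loc}(k\omega_2)$ of $\mathfrak{sl}_3[t]$-submodules, indexed by the pairs $(r,\nu)$ in a suitable total order, such that each $W_{(j)}/W_{(j-1)}$ is a quotient of the corresponding $\tau_{|\nu|}W_{M-r}((m-r)\omega_1+(k-r)\omega_2)$; comparing $\sum_j\dim\big(W_{(j)}/W_{(j-1)}\big)$ with the $q=1$ specialisation of the displayed identity then forces every surjection to be an isomorphism, which is exactly the assertion (the multiplicity of $\tau_{l_r}W_{M-r}(\cdots)$ being the number of $\nu\subseteq r\times(L-r)$ with $|\nu|=l_r$, i.e.\ the $q^{l_r}$-coefficient of $\begin{bmatrix} L\\ r\end{bmatrix}_q$).

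\emph{Constructing the chain.} Assume $m\ge k$ (the case $m<k$ follows by applying the diagram automorphism $-w_0$, which swaps $\omega_1\leftrightarrow\omega_2$ and hence interchanges $m$ and $k$). Work with the cyclic generator $w:=w_{m\omega_1}\otimes w_{w_0\mu}$ of the tensor product from \lemref{generator of tensor}, $\mu=k\omega_2$, with $w_{w_0\mu}$ the lowest-weight vector of $W_{loc}(k\omega_2)$; recall $\mathfrak n^-[t]w_{w_0\mu}=0$ (\corref{lowest.wt.W.mu}) and $(y_{\alpha_2}\otimes t^s)w_{m\omega_1}=0=(x_{\alpha_2}\otimes t^s)w$ for all $s$. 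By the partition-overlaid-pattern description of the Chari--Loktev basis (\propref{popth}), the weight space $W_{loc}(k\omega_2)_{-r\omega_1+(k-r)\omega_2}$ has graded dimension $\begin{bmatrix} L\\ r\end{bmatrix}_q$, with a basis $\{v_{r,\nu}\}$ indexed by $\nu\subseteq r\times(L-r)$ and $v_{r,\nu}$ of grade $|\nu|$; put $w_{r,\nu}:=w_{m\omega_1}\otimes v_{r,\nu}$, a vector of weight $(m-r)\omega_1+(k-r)\omega_2$ and grade $|\nu|$ lying in $w_{m\omega_1}\otimes W_{loc}(k\omega_2)$. Let $W_{(j)}$ be the $\bu(\mathfrak g[t])$-submodule generated by all $w_{r,\nu}$ up to the $j$-th pair, the pairs being ordered so that $r$ is non-decreasing, so that $W_{(1)}=\bu(\mathfrak g[t])(w_{m\omega_1}\otimes w_{k\omega_2})$ and $W_{(N)}=\bu(\mathfrak g[t])w=W_{loc}(m\omega_1)\otimes W_{loc}(k\omega_2)$. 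Using the POP straightening rules of \cite{pop} together with Garland's identity (\lemref{Garland equation}), one checks that this is an increasing chain of genuine $\mathfrak{sl}_3[t]$-submodules exhausting the tensor product.

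\emph{Identifying the quotients and concluding.} In $\overline W:=W_{(j)}/W_{(j-1)}$ the image $\bar w_{r,\nu}$ generates $\overline W$, and one verifies that it satisfies the defining relations of $\tau_{|\nu|}W_{M-r}((m-r)\omega_1+(k-r)\omega_2)$: the relations $(\mathfrak n^+\otimes\C[t])\bar w_{r,\nu}=0$ because raising $w_{r,\nu}$ lands in earlier submodules of the chain (via \corref{lowest.wt.W.mu} and $(x_{\alpha_2}\otimes t^s)w=0$); the relations $(h\otimes t^s)\bar w_{r,\nu}=\delta_{s,0}\big((m-r)\omega_1+(k-r)\omega_2\big)(h)\,\bar w_{r,\nu}$ from the weight of $w_{r,\nu}$; the relations $(y_{\alpha_1}\otimes 1)^{m-r+1}\bar w_{r,\nu}=0$ and $(y_{\alpha_2}\otimes 1)^{k-r+1}\bar w_{r,\nu}=0$ from the corresponding relations for $w_{m\omega_1}$ and $w_{w_0\mu}$ modulo $W_{(j-1)}$; and, crucially, the truncation relation $(y_\theta\otimes t^{M-r})\bar w_{r,\nu}=0$, because in $W_{loc}(m\omega_1)$ one has $(y_\theta\otimes t^s)w_{m\omega_1}=0$ for $s\ge m$ and in $W_{loc}(k\omega_2)$ one has $(y_\theta\otimes t^s)w_{k\omega_2}=0$ for $s\ge k$, so the surviving contribution to $(y_\theta\otimes t^{M-r})w_{r,\nu}$ has weight $(m-r-1)\omega_1+(k-r-1)\omega_2$ and already lies in $W_{(j-1)}$. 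Hence $\overline W$ is a quotient of $\tau_{|\nu|}W_{M-r}((m-r)\omega_1+(k-r)\omega_2)$, so by the first paragraph
$$\dim\big(W_{loc}(m\omega_1)\otimes W_{loc}(k\omega_2)\big)=\sum_{j}\dim\big(W_{(j)}/W_{(j-1)}\big)\ \le\ \sum_{r,\nu}\dim W_{M-r}\big((m-r)\omega_1+(k-r)\omega_2\big),$$
which by \lemref{t product} at $q=1$ equals $\dim\big(W_{loc}(m\omega_1)\otimes W_{loc}(k\omega_2)\big)$; equality forces each surjection $\tau_{|\nu|}W_{M-r}((m-r)\omega_1+(k-r)\omega_2)\twoheadrightarrow\overline W$ to be an isomorphism, which proves the theorem.

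\emph{The main obstacle.} The delicate part is the second step: pinning down the correct total order on $\{(r,\nu)\}$ and proving that the submodules $W_{(j)}$ it generates really do increase and exhaust the module, and that modulo $W_{(j-1)}$ the generator $\bar w_{r,\nu}$ picks up exactly the truncated Weyl relations and no further relation collapsing the quotient. Controlling how $\mathfrak g[t]$ moves the Chari--Loktev basis between the weight strata — in particular verifying that $(y_\theta\otimes t^{M-r})w_{r,\nu}\in W_{(j-1)}$ — is precisely where the partition-overlaid-pattern parametrisation and its straightening technology from \cite{pop} are indispensable; once that is in hand, the character identity of \lemref{t product} supplies everything else for free.
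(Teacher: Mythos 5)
Your overall strategy — filter the tensor product by submodules generated by $w_{m\omega_1}\otimes v_{r,\nu}$ with $v_{r,\nu}$ ranging over the Chari–Loktev/POP basis of the second factor, identify each successive quotient as a cyclic quotient of a truncated Weyl module, and close the argument with \lemref{t product} — is the same strategy as the paper's proof. But there is a genuine gap in the key step, and it is not repaired by the appeal to the POP straightening technology.

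The problem is the truncation relation. You claim $(y_\theta\otimes t^{M-r})\bar w_{r,\nu}=0$ in $W_{(j)}/W_{(j-1)}$, with $M-r=m-r$ (assuming $m\geq k$). Writing out the action on the tensor product, the first term is $(y_\theta\otimes t^{m-r})w_{m\omega_1}\otimes v_{r,\nu}$. For $r>0$ one has $m-r<m=m\omega_1(h_\theta)$, so $(y_\theta\otimes t^{m-r})w_{m\omega_1}\neq 0$ in $W_{loc}(m\omega_1)$. Your argument says "the surviving contribution\ldots has weight $(m-r-1)\omega_1+(k-r-1)\omega_2$ and already lies in $W_{(j-1)}$," but the submodules $W_{(j')}$ with $j'<j$ are generated by vectors $w_{m\omega_1}\otimes v_{r',\nu'}$ whose first tensor factor is still $w_{m\omega_1}$; weight considerations alone do not show that $(y_\theta\otimes t^{m-r})w_{m\omega_1}\otimes v_{r,\nu}$ is in the $\bu(\mathfrak{g}[t])$-span of those generators. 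The POP straightening rules live entirely in the second factor $W_{loc}(k\omega_2)$, so they cannot control this term either. The paper makes the weaker and correct claim: since $(y_\alpha\otimes t^{M})w_{m\omega_1}=0$ and $(y_\alpha\otimes t^{M})$ kills $W_{loc}(k\omega_2)$ (a $\mathfrak{g}\otimes\mathcal A_k$-module, $k\le M$), each quotient is a quotient of $\tau_{l_r}W_{M}\big((m-r)\omega_1+(k-r)\omega_2\big)$, and only after comparing with the graded-character identity of \lemref{t product} does one conclude that the quotients are the smaller modules $W_{M-r}(\cdots)$. With your claimed (but unjustified) truncation at $M-r$, the dimension inequality would close trivially; with the correct truncation at $M$, the closing step requires the more careful graded-by-weight comparison against \lemref{t product} that the paper carries out, which your sketch does not supply.
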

\proof Given $m, k \in \mathbb{N}$, suppose that $k \leq m$. Let $w_1 $ and $w_2 $ be the generators of the local Weyl modules $W_{loc}(m\omega_{1})$ and $ W_{loc}(k\omega_2 )$ respectively. By \propref{POP.basis}, $\mathfrak B(k\omega_2)$ is an  ordered basis of $W_{loc}(k\omega_2)$ which is  parametrized by $\mathfrak P(k\omega_2)$, the set of  partition overlaid Gelfand-Tsetlin patterns with bounding sequence $(k,k,0)$. For each $(\eta,\pi_\eta)\in \mathfrak P(k\omega_2)$, set $$v_{\eta,\pi_\eta} = \omega_1\otimes y(\eta,\pi_\eta)\omega_2,$$ 
$$\mathcal B_{m,k}^{(\eta,\pi_\eta)\leq } = \{v_{\zeta,\pi_\zeta} : (\zeta,\pi_\zeta)\in \mathfrak P(k\omega_2), \, (\eta,\pi_\eta)\leq (\zeta,\pi_\zeta) \},$$ and let $\mathcal T(\eta,\pi_\eta)$ be the submodule of $W_{loc}(m\omega_1)\otimes W_{loc}(k\omega_2)$ generated by $\mathcal B_{m,k}^{(\eta,\pi_\eta)\leq }.$ Let $(\eta_0,\pi_{\eta_0})\in \mathfrak P(k\omega_2)$ be the element that  corresponds to the lowest weight vector of $W_{loc}(k\omega_2)$. Then by \lemref{generator of tensor}, $$W_{loc}(m\omega_1)\otimes W_{loc}(k\omega_2) = \mathcal T(\eta_0,\pi_{\eta_0}).$$ Further, by definition, 
$\mathcal T(\zeta,\pi_\zeta)$ is a submodule of  $\mathcal T(\eta,\pi_\eta)$ for all $(\zeta,\pi_\zeta)\in \mathfrak P(k\omega_2)$ such that $(\eta,\pi_\eta) \leq (\zeta,\pi_\zeta)$. This gives a descending chain of submodules of the $\mathfrak{sl}_3[t]$-module $W_{loc}(m\omega_1)\otimes W_{loc}(k\omega_2)$. Consequently, $$\ch_{gr} W_{loc}(m\omega_1)\otimes W_{loc}(k\omega_2) = \sum\limits_{(\eta,\pi_\eta)\in \mathfrak P(k\omega_2)} \ch_{gr} \dfrac{\mathcal T(\eta,\pi_\eta)}{ \mathcal T(\eta^+,\pi_\eta^+)}, $$ where $(\eta^+,\pi_{\eta^+})\in \mathfrak P(k\omega_2)$ is as defined in \secref{POP}. 

 Given $p,n,k\in \mathbb Z_+$ let $\eta_k(p,q)$ be a triple of arrays given by 
 $\eta_k(p,q) :=  \begin{array}{ccccc} &k-q&\\ &k\qquad  \, k-p\\ k & k & 0  \end{array}.$ Then, $(\eta,\pi_\eta)\in \mathfrak{P}(k\omega_2)$ whenever $\eta=\eta_k(p,q)$ with $q\leq p\leq k.$\\ We claim that $ {\mathcal T(\eta,\pi_\eta)}\subseteq { \mathcal T(\eta^+,\pi_\eta^+)}$ for all $\eta=\eta_k(p,q)$ with $p\neq q$.

Let $\mathcal T(\varphi,\pi_{\varphi})$ be the submodule of $W_{loc}(m\omega_1)\otimes W_{loc}(k\omega_2)$ generated by $w_1\otimes w_2.$ By definition $\mathcal T(\varphi,\pi_{\varphi}) \subset \mathcal T(\eta,\pi_\eta)$ for all $(\eta,\pi_\eta)\in \mathfrak P(k\omega_2)$.
As $y_{22}\otimes t^s.w_1 =0,$ given a positive integer $p$ and a $p$-tuple of integers $\bos$,
$$w_1\otimes y_{22}(p,\bos)w_2 = y_{22}(p,\bos)(w_1\otimes w_2)\in \mathcal T(\varphi,\pi_{\varphi}). $$ This proves $\dfrac{\mathcal T(\nu,\pi_\nu)}{ \mathcal T(\nu^+,\pi_\nu^+)} = 0, $ for all $(\nu,\pi_{\nu})\in \mathfrak P(k\omega_2)$ with 
$\nu =\eta_k(p,0)$.

\noindent Likewise, using the fact that $y_{22}\otimes t^s.w_1 =0$ and $y_{11}\otimes t^s.w_2 =0$, $\forall \, s \in \mathbb Z_+$, for any $p$-tuple of integers $\bos$, we have,
\begin{equation} \label{1} 
y_{11}^{(p)} y_{22}(p,\bos).w_2=y_{12}(p,\bos)w_2, \quad \text{ whenever } p\leq k. \end{equation}
Given $\bos=(s_1,s_2,\cdots,s_{p+1})$, 
let $\hat{\bos}_i:=(s_1,\cdots,s_{i-1},s_{i+1},\cdots,s_{p+1})$, $1\leq i\leq p+1$. Then using \eqref{1} we have, $y_{11}^{(p)}y_{22}(p+1,\bos)w_2=\sum\limits_{i=1}^{p+1} y_{22}\otimes t^{s_i}y_{12}(p,\hat{\bos}_i)w_2 $ and consequently,  
$$\begin{array}{ll}\sum\limits_{i=1}^{p+1}y_{22}\otimes t^{s_i}(w_1\otimes y_{12}(p,\hat{\bos}_i)w_2) &= \sum\limits_{i=1}^{p+1} w_1\otimes (y_{22}\otimes t^{s_i})y_{12}(p,\hat{\bos}_i)w_2\\&=w_1 \otimes(  y_{11}^{(p)})(y_{22}(p+1,\bos)w_{2}).\end{array} $$ Given $\bos=(s_1,s_2,\cdots,s_p),$ note that 
$y_{11}\otimes t^r y_{22}(p,\bos)w_2 = \sum\limits_{i=1}^p y_{22}(p-1,\hat{\bos}_i)y_{12}(1,s_i+r) w_2.$ Thus for $\nu =\eta_k(p,q)$ with $p>q$, 
using the above calculations suitably many times we see that 
$\mathcal T(\nu,\pi_\nu)\subset \mathcal T(\nu^+,\pi_\nu^+)$.   
Hence it follows that
$$\ch_{gr} W_{loc}(m\omega_1)\otimes W_{loc}(k\omega_2) = \sum\limits_{0\leq p\leq k}
\ch_{gr} \dfrac{\mathcal T(\eta(p),\pi_{\eta(p)})}{ \mathcal T(\eta(p)^+,\pi_{\eta(p)}^+)}, $$ where 
$\eta(p) = \eta_k(p,p)$.

To complete the proof of the theorem we now consider the modules $\dfrac{\mathcal T( \eta(p),\pi_{\eta(p)})}{ \mathcal T(\eta(p)^+,\pi_{\eta(p)}^+)}$, for $0\leq p\leq k$. Since  
\begin{align*}&[(x_{\alpha_i}\otimes t^r), y_{12}(p,\bos)]w_2 = 
\sum\limits_{i=1}^{p}  (y_{\theta-\alpha_i} \otimes t^{s_i+r}) y_\theta(p-1, \hat{\bos}_i) w_2,    \quad  \text{for } i=1,2\\ 
&[(x_{12}\otimes t^r), y_{12}(p,\bos)]w_2 = 
\sum\limits_{0\leq i\leq p } y_{12}(i-1, \bos(i))h_\theta\otimes t^{s_i+r} y_{12}(p-i,\bos'(i))w_2\end{align*} where for $\bos=(s_1,\cdots,s_p)$, 
 $\bos(i) := (s_1,\cdots,s_{i-1})$ and $\bos'(i):=(s_{i+1},\cdots,s_p)$ for $0\leq i\leq p$; and $$[(h\otimes t^r), y_{12}(p,\bos)]w_{k\omega_2} = \sum\limits_{i=1}^{p} A_i (y_{\theta} \otimes t^{s_i+r}) y_\theta(p-1, \hat{\bos}_i) w_{k\omega_2}, $$ with  $A_i\in \mathbb C, \ 1\leq i\leq p$, using \eqref{1}, we see that, 
$$\begin{array}{l} x_\alpha \otimes t^r v_{\eta(p),\pi_{\eta(p)}} \in \mathcal T(\eta(p)^+, \pi^+_{\eta(p)}), \qquad \forall \alpha\in \Phi^+, \\ 
h \otimes t^r v_{\eta(p),\pi_{\eta(p)}} \in \mathcal T(\eta(p)^+, \pi^+_{\eta(p)}), \qquad h\in \mathfrak h.\end{array}$$ This shows that $\dfrac{\mathcal T(\eta(p),\pi_{\eta(p)})}{ \mathcal T(\eta(p)^+,\pi_{\eta(p)}^+)}$ is a highest weight module generated by the image of $v_{\eta(p),\pi_{\eta(p)}}$  under the natural projection from $T(\eta(p), \pi_{\eta(p)})$ onto $\dfrac{\mathcal T(\eta(p),\pi_{\eta(p)})}{ \mathcal T(\eta(p)^+,\pi_{\eta(p)}^+)}$ for each $0\leq p\leq k$.
Further, as the $\mathfrak h$-weight of $v_{\eta(p),\pi_{\eta(p)}}$ is $m\omega_1+k\omega_2-p\theta$ and $$(y_{\alpha}\otimes t^m )v_{\eta(p),\pi_{\eta(p)}}=0, \qquad \forall\, \alpha\in \Phi^+,$$ we see that $\mathcal T(\varphi,\pi_\varphi)$ is isomorphic to the truncated local Weyl module $W_m(m\omega_1+k\omega_2)$ and for all $(\eta(p),\pi_{\eta(p)})\in \mathfrak P(k\omega_2)$ with  $1\leq p\leq k$,  $\dfrac{\mathcal T(\eta(p),\pi_{\eta(p)})}{ \mathcal T(\eta(p)^+,\pi_{\eta(p)}^+)}$ is a quotient of the truncated local Weyl module $\tau_{|(\eta(p),\pi_{\eta(p)})|_{22}}(W_{m}((m-r)\omega_1 + (k-r)\omega_2 ))$, where $| . |_{22}$ is the function defined in \secref{POP}. 

Setting  the $\mathbb Z$-grade of $y_{12}(p,\pi_{\eta(p)})$ by $r(\eta(p))$, we thus see that, 
\begin{align} \label{ch.12.2}
\ch_{gr} W_{loc}(m\omega_1)\otimes W_{loc}(k\omega_2) = &\sum\limits_{j=1}^k \sum\limits_{(\eta(p),\pi_{\eta(p)})\in \mathfrak P(k\omega_2)} q^{r(\eta(p))} \ch_{gr} \dfrac{\mathcal T(\eta(p),\pi_{\eta(p)})}{ \mathcal T(\eta(p)^+,\pi_{\eta(p)}^+)}\\ &+ \ch_{gr} W_m(m\omega_1+k\omega_2) \end{align}
Notice that by definition of $\mathfrak P(k\omega_2)$, for a fixed $r\in \mathbb N,$ there exists a one-to-one correspondence between the set $\{(\eta(p), \pi_{\eta(p)})\in \mathfrak P(k\omega_2) : |(\eta(p), \pi_{\eta(p)})|_{22}=r \}$ and the set $\{(s_1\leq \cdots\leq s_p)\in \mathbb Z^k: 0\leq s_i\leq k-p,\, \sum\limits_{i=1}^p s_i=r \}$. So from standard  q-binomial theory it follows that 
$$\# \{(\eta(p), \pi_{\eta(p)})\in \mathfrak P(k\omega_2) : |(\eta(p), \pi_{\eta(p)})|_{22}=r \} = \text{coefficient of } q^r \text{ in } \begin{bmatrix} k \\ p \end{bmatrix}_{q}. $$
Now, comparing \eqref{ch.12.2} with the graded character \eqref{t.product} of $W_{loc}(m\omega_1)\otimes W_{loc}(k\omega_2)$  and observing that $\dfrac{\mathcal T(\eta(p),\pi_{\eta(p)})}{ \mathcal T(\eta(p)^+,\pi_{\eta(p)}^+)}$ is a quotient of $W_m((m-p)\omega_1+(k-p)\omega_2)$, we conclude that 
 $\dfrac{\mathcal T(\eta(p),\pi_{\eta(p)})}{ \mathcal T(\eta(p)^+,\pi_{\eta(p)}^+)} $, $1\leq p\leq k$, is isomorphic to $W_{m-p}((m-p)\omega_1+(k-p)\omega_2)$ as $\mathfrak{sl}_3[t]$-module. Since  by the characterization of $\mathfrak P(k\omega_2)$,  $|(\eta(p), \pi_{\eta(p)})|_{22}$ ranges between $0$ and $(k-p)p$ and the theorem follows in the case when $m\geq k$.\endproof

%
%
%
%

\section{ The module $M(\nu,\lambda)$ and Filtration of $W_{loc}(m\omega_i)\otimes W_{loc}(k\omega_i)$ for $i=1,n$}

For a pair of dominant integral weights $(\nu,\lambda)$, the $\mathfrak{sl}_{n+1}[t]$-modules $M(\nu, \lambda)$ were introduced in \cite{MR4224104}. In this section we recall their definition and show that the tensor product modules $W_{loc}(m\omega_i)\otimes W_{loc}(k\omega_i)$, for $i=1,n$ have a filtration by graded $\mathfrak g[t]$-modules where the successive quoients are isomorphic to $M(\nu,\lambda)$-modules. 

\subsection{} 
The following definition of the $\mathfrak{sl}_{n+1}[t]$-module $M(\nu,\lambda)$ was given in \cite{MR4224104}. \begin{definition}
    For $\lambda \in P^+$, suppose that $\lambda= 2\lambda_0+\lambda_1$, where $\lambda_0, \lambda_1\in P^+$ are such that $\lambda_1(h_{\alpha_i})\leq 1$ for $i\in I$. Then for a pair of dominant  integral weights $(\nu,\lambda)$, the $\mathfrak{sl}_{n+1}[t]$-module $M(\nu,\lambda)$ is defined to be quotient of $W_{loc}(\lambda+\nu)$ by the submodule generated by the elements:
   $$ \begin{array}{cc}
       (y_{\alpha} \otimes t^{\lceil{\lambda(h_{\alpha})/2}\rceil+\nu(h_{\alpha})} ) w_{\nu+\lambda}, \quad \forall \, h \in \mathfrak{h}, \alpha \in \Phi^+.   
    \end{array}$$ 
\end{definition}

\noindent In this context the following was proved in \cite{MR4224104}.

\begin{proposition}\cite{MR4224104}\label{M.mu.nu} Let $\mathfrak g =  \mathfrak{sl}_{n+1}(\mathbb C)$ and $\nu, \lambda \in P^+ $ be such that $\nu(h_j) \geq 2$ for some $1 \leq j \leq n.$ 
\begin{itemize} \item[i.] As $\mathfrak{sl}_{n+1}[t]$-modules, 
 $M(\nu, \lambda)$ is isomorphic to $D(2, \lambda) \ast W_{loc}(\nu).$ \label{M.m.n.thm}
\item[ii.] For $\nu=\nu_1\omega_1+\nu_2\omega_2$,  $\nu'=\nu_n\omega_n+\nu_{n-1}\omega_{n-1}$ and $\lambda = 2\lambda_1\omega_1$, $\lambda'=2\lambda_n\omega_n$, 
 \label{truc in local}
$$\ch_{gr} M(\nu,\lambda)= \sum\limits_{i=0}^{\lambda_1} (-1)^{i} \begin{bmatrix} \lambda_1 \\ i \end{bmatrix}_{q} q^{i(\lambda_1+\nu_1) - i(i-1)/2} \ch_{gr} W_{loc}((2\lambda_1+\nu_1-2i)\omega_{1} + (\nu_2+i)\omega_{2})$$
$$\ch_{gr} M(\nu', \lambda')  =  \sum\limits_{i=0}^{\lambda_n} (-1)^{i} \begin{bmatrix} \lambda_n \\ i \end{bmatrix}_{q} q^{i(\lambda_n+\nu_n) - i(i-1)/2} \ch_{gr} W_{loc}((\nu_{n-1}+i)\omega_{n-1} + (2\lambda_n+\nu_n-2i)\omega_{n}).$$ 
\end{itemize}
\end{proposition}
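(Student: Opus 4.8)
The plan is to treat the two parts in turn: part~(i) is the substantive statement, and part~(ii) will then be deduced from it by an induction parallel to the one used for truncated Weyl modules in \thmref{trunc}. For part~(i) the idea is to realize both sides as one and the same CV-module. Put $\mu=\lambda+\nu$ and consider the $\mu$-compatible $|\Phi^+|$-tuple $\bxi=(\xi^\alpha)_{\alpha\in\Phi^+}$ with $\xi^\alpha=\bigl(2^{\lfloor\lambda(h_\alpha)/2\rfloor},\,1^{(\lambda(h_\alpha)\,\bmod\,2)+\nu(h_\alpha)}\bigr)$; this is precisely the sorted concatenation of the tuple $\xi_D^\alpha=(2^{\lfloor\lambda(h_\alpha)/2\rfloor},1^{\lambda(h_\alpha)\,\bmod\,2})$ associated with $D(2,\lambda)$ and the tuple $\xi_\nu^\alpha=(1^{\nu(h_\alpha)})$ associated with $W_{loc}(\nu)$. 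First I would show $M(\nu,\lambda)\cong V(\bxi)$: using \lemref{Garland equation} one checks that, given the $\mathfrak n^+[t]$- and $\mathfrak h$-relations, the whole family of CV-relations $y_\alpha(r,s)v_\bxi=0$ is equivalent to the single family $(y_\alpha\otimes t^{\lceil\lambda(h_\alpha)/2\rceil+\nu(h_\alpha)})v_\bxi=0$ defining $M(\nu,\lambda)$ --- the crucial point being the elementary identity $\min_{k\geq 0}\bigl(k+\sum_{j\geq k+1}\xi^\alpha_j\bigr)=\lceil\lambda(h_\alpha)/2\rceil+\nu(h_\alpha)$, attained at $k=\lfloor\lambda(h_\alpha)/2\rfloor$, together with the $\mathfrak{sl}_2[t]$-reduction of \lemref{sl2.CV.op} showing that the relations with $r\geq 2$ follow from those with $r=1$.

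Next I would exhibit a surjection $M(\nu,\lambda)\twoheadrightarrow D(2,\lambda)\ast W_{loc}(\nu)$ by checking that the canonical generator of the fusion product satisfies the defining relations of $M(\nu,\lambda)$; by \lemref{fusion_def} the relation $(y_\alpha\otimes t^{\lceil\lambda(h_\alpha)/2\rceil+\nu(h_\alpha)})$ acting on it breaks up into the separate vanishings $(y_\alpha\otimes t^p)w_\lambda=0$ for $p\geq\lceil\lambda(h_\alpha)/2\rceil$ in $D(2,\lambda)$ and $(y_\alpha\otimes t^p)w_\nu=0$ for $p\geq\nu(h_\alpha)$ in $W_{loc}(\nu)\cong D(1,\nu)$. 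Since $\dim\bigl(D(2,\lambda)\ast W_{loc}(\nu)\bigr)=\dim D(2,\lambda)\cdot\dim W_{loc}(\nu)$, it only remains to bound $\dim M(\nu,\lambda)$ from above by the same number, and I expect this to be the main obstacle: the cleanest route is to invoke the theorem of \cite{MR3296163} that the fusion product $V(\xi_D)\ast V(\xi_\nu)$ of two CV-modules is itself a CV-module, with parameter the sorted concatenation $\bxi$, provided $\xi_D$ and $\xi_\nu$ satisfy the appropriate dominance condition --- and the hypothesis $\nu(h_j)\geq 2$ for some $j$ is exactly what forces that condition here. Combining this with the surjection above and $M(\nu,\lambda)\cong V(\bxi)$ yields the isomorphism.

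For part~(ii) I would first record, using part~(i), that $\dim M(\rho,\sigma)=\dim D(2,\sigma)\cdot\dim W_{loc}(\rho)$ and that $D(2,2m\omega_1)\cong\ev_0 V(2\omega_1)^{\ast m}$ is a ``small'' level two Demazure module of known dimension ${n+2\choose 2}^m$. The plan is then to establish, for $\mu=(2\lambda_1+\nu_1)\omega_1+\nu_2\omega_2$, the short exact sequence of graded $\mathfrak{sl}_{n+1}[t]$-modules
\begin{align*}
0 &\longrightarrow \tau_{\lambda_1+\nu_1}M\bigl(\nu_1\omega_1+(\nu_2+1)\omega_2,\,2(\lambda_1-1)\omega_1\bigr) \\
&\longrightarrow M\bigl((\nu_1+2)\omega_1+\nu_2\omega_2,\,2(\lambda_1-1)\omega_1\bigr) \\
&\longrightarrow M(\nu,2\lambda_1\omega_1)\longrightarrow 0 ,
\end{align*}
which is the exact counterpart of \thmref{trunc}(ii): the surjection on the right comes from strengthening the relation $(y_{\alpha_1}\otimes t^{\lambda_1+\nu_1+1})w=0$ to $(y_{\alpha_1}\otimes t^{\lambda_1+\nu_1})w=0$ (since $\lambda=2\lambda_1\omega_1$, one checks that the remaining defining relations of $M(\nu,2\lambda_1\omega_1)$ hold automatically in $W_{loc}(\mu)$ once this one does), and its kernel is generated by $v'=(y_{\alpha_1}\otimes t^{\lambda_1+\nu_1})w$, a vector of weight $\mu-\alpha_1$ and degree $\lambda_1+\nu_1$. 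An $\mathfrak{sl}_2[t]$-computation as in the proof of \lemref{trun2} identifies $(y_{\alpha_1}\otimes t^{\lambda_1+\nu_1-1})v'$ with $y_{\alpha_1}(2,\,2\lambda_1+2\nu_1-1)w$ and shows it vanishes, so $\bu(\mathfrak g[t])v'$ is a quotient of $\tau_{\lambda_1+\nu_1}M(\nu_1\omega_1+(\nu_2+1)\omega_2,2(\lambda_1-1)\omega_1)$; the elementary identity ${n+2\choose 2}+{n+1\choose 2}=(n+1)^2$ together with the dimension formula from part~(i) then forces this to be an isomorphism. Taking graded characters of the sequence and inducting on $\lambda_1$ (base case $\lambda_1=0$, where $M(\nu,0)=W_{loc}(\nu)$), the resulting recursion collapses to the stated formula via the $q$-Pascal identity $\begin{bmatrix}\lambda_1\\ i\end{bmatrix}_q=q^{i}\begin{bmatrix}\lambda_1-1\\ i\end{bmatrix}_q+\begin{bmatrix}\lambda_1-1\\ i-1\end{bmatrix}_q$, exactly as in the proof of \lemref{t in local}. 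The second formula follows from the first by applying the diagram automorphism $-w_0:\omega_i\mapsto\omega_{n+1-i}$, as in the proof of \lemref{loc in loc}(iii). The technical crux of part~(ii) is this kernel identification --- confirming $\bu(\mathfrak g[t])v'$ is the full module and not a proper quotient --- which is precisely where part~(i) enters.
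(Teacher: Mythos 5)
The paper's ``proof'' of this proposition is a bare citation to \cite{MR4224104}, so your attempt to supply a self-contained argument is necessarily a different route. Part~(ii) of your proposal is, assuming part~(i), a clean and correct argument: the short exact sequence you set up is dimensionally consistent (the identity $\binom{n+1}{2}+\binom{n+2}{2}=(n+1)^2$ is exactly what makes the three fusion-product dimensions add up), the surjection and the generator of its kernel are correctly identified, and the $q$-Pascal recursion does collapse to the stated formula --- I checked the exponent bookkeeping and the two exponents $i(\lambda_1+\nu_1+1)-\tbinom{i}{2}$ and $i(\lambda_1+\nu_1)-\tbinom{i}{2}$ combine as you say. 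This is genuinely different from what the paper does (the paper invokes explicit equations of \cite{MR4224104}, which are Macdonald-polynomial character identities), and your route has the virtue of mirroring the proof of \lemref{trunc}(ii) already in the paper.

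The real gap is in part~(i), specifically in the upper bound $\dim M(\nu,\lambda)\leq\dim D(2,\lambda)\cdot\dim W_{loc}(\nu)$. You invoke ``the theorem of \cite{MR3296163} that the fusion product $V(\xi_D)\ast V(\xi_\nu)$ of two CV-modules is itself a CV-module, with parameter the sorted concatenation.'' No theorem of that generality exists in \cite{MR3296163} for $\mathfrak{sl}_{n+1}$ with $n\geq 2$. What Chari--Venkatesh prove is: (a) the full fusion-product-of-CV-modules theorem only in rank one ($\mathfrak{sl}_2$), where the partition indexing is by a single $\alpha$; and (b) in higher rank, the result that a Demazure module $D(\ell,\lambda)$ factors as a fusion product of Demazure modules \emph{of the same level} (\cite[Prop.~5.1]{MR3296163}). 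The fusion product of CV-modules of \emph{different} levels (here $D(2,\lambda)$ and $D(1,\nu)$) being again a CV-module is a much more delicate statement; it is the content of Naoi's theorem \cite{MR3704743} in the cases where it is known, and in \cite{MR4224104} the needed upper bound on $\dim M(\nu,\lambda)$ is instead extracted from explicit Macdonald/Pieri computations --- essentially the material developed in Section~\ref{pieri formulas} of the present paper. Your appeal to a ``dominance condition'' forced by the hypothesis $\nu(h_j)\geq 2$ is a placeholder for an argument that is not in the cited source; without replacing it by either a correct citation (Naoi, or \cite[Cor.~2.8]{MR4224104} itself) or an explicit Pieri-style bound, part~(i) --- and therefore the whole of part~(ii), which leans on the dimension formula from part~(i) --- does not close.

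A secondary point worth flagging: your reduction of the CV relations of $V(\bxi)$ to the single relation $(y_\alpha\otimes t^{\lceil\lambda(h_\alpha)/2\rceil+\nu(h_\alpha)})v=0$ (so that $M(\nu,\lambda)\cong V(\bxi)$) is asserted but not argued; the $r=1$ CV relation recovers the single relation, but showing the $r\geq 2$ relations are consequences requires the Garland identities and is precisely where \cite{MR4224104} devotes a lemma. Also, the hypothesis $\nu(h_j)\geq 2$ may not propagate down the entire recursion tree in part~(ii) (e.g.\ the branch that produces $\nu_2+1$), so you would need to verify that the character formula --- unlike the isomorphism --- remains valid without that hypothesis, or argue the base cases separately.
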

\noindent \begin{proof} While (i) was proved in \cite[Corollary 2.8]{MR4224104}, part(ii) can be deduced using Equations (2.8),(2.9) and (3.6) from \cite{MR4224104}. \end{proof}

\vspace{.15cm}

\subsection{}We now give the graded character of the $\mathfrak{sl}_{n+1}[t]$-modules  $W_{loc}(m \omega_{i}) \otimes W_{loc}(k\omega_{i})$ for $i=1,n$, in terms of the graded characters of $M(\nu,\lambda)$-modules.
\begin{lemma} \label{tensor product}
Given $m,k \in \mathbb{Z}_{\geq 0}$ , let $\mathcal M =\max\{m,k\}$ and $L=\min\{m,k\}$. Then,
\begin{align}
\ch_{gr} W_{loc}(m\omega_{1}) \otimes W_{loc}(k\omega_{1}) & = \sum\limits_{i=0}^{L} \begin{bmatrix} L \\ i \end{bmatrix}_{q} \ch_{gr} M(({\mathcal M} -L)\omega_{1}+ i \omega_{2}, 2(L-i)\omega_{1}) \label{ch.M.n.l}\\
\ch_{gr} W_{loc}(m\omega_{n}) \otimes W_{loc}(k\omega_{n}) & = \sum\limits_{i=0}^{L} 
\begin{bmatrix} L \\ i \end{bmatrix}_{q} \ch_{gr} M(i \omega_{n-1}+ ({\mathcal M}-L) \omega_{n}, 2(L-i)\omega_{n})  \label{ch.M.n.l.2}
\end{align}
\end{lemma}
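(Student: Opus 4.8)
The plan is to prove the first identity~\eqref{ch.M.n.l} by expanding its right-hand side through the character formula for $M(\nu,\lambda)$ recorded in \propref{M.mu.nu}(ii), then reorganising the resulting double sum and comparing it, coefficient by coefficient, with the expansion of $\ch_{gr} W_{loc}(m\omega_1)\otimes W_{loc}(k\omega_1)$ furnished by \lemref{loc in loc}(ii). The second identity~\eqref{ch.M.n.l.2} will follow from~\eqref{ch.M.n.l} by applying the diagram automorphism $-w_0\colon \omega_i\mapsto \omega_{n+1-i}$, exactly as \lemref{loc in loc}(iii) was deduced from \lemref{loc in loc}(ii).

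First I would substitute $\nu=(\mathcal M-L)\omega_1+i\omega_2$ and $\lambda=2(L-i)\omega_1$ into \propref{M.mu.nu}(ii) (so that $\nu_1=\mathcal M-L$, $\nu_2=i$ and $\lambda_1=L-i$), which gives
$$\ch_{gr} M\big((\mathcal M-L)\omega_1+i\omega_2,\ 2(L-i)\omega_1\big)=\sum_{s=0}^{L-i}(-1)^s\begin{bmatrix}L-i\\ s\end{bmatrix}_q q^{\,s(\mathcal M-i)-s(s-1)/2}\,\ch_{gr} W_{loc}\big((\mathcal M+L-2i-2s)\omega_1+(i+s)\omega_2\big).$$
Feeding this into the right-hand side of~\eqref{ch.M.n.l} and collecting, by means of the change of index $j=i+s$, all terms carrying a given $\ch_{gr} W_{loc}\big((\mathcal M+L-2j)\omega_1+j\omega_2\big)$ with $0\le j\le L$, one finds that its coefficient equals
$$\sum_{i=0}^{j}(-1)^{j-i}\begin{bmatrix}L\\ i\end{bmatrix}_q\begin{bmatrix}L-i\\ j-i\end{bmatrix}_q q^{\,(j-i)(\mathcal M-i)-\binom{j-i}{2}}.$$

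Next I would simplify this last sum. Using the standard $q$-binomial identity $\begin{bmatrix}L\\ i\end{bmatrix}_q\begin{bmatrix}L-i\\ j-i\end{bmatrix}_q=\begin{bmatrix}L\\ j\end{bmatrix}_q\begin{bmatrix}j\\ i\end{bmatrix}_q$ and then setting $r=j-i$, the coefficient becomes
$$\begin{bmatrix}L\\ j\end{bmatrix}_q\sum_{r=0}^{j}(-1)^r\begin{bmatrix}j\\ r\end{bmatrix}_q q^{\,r(\mathcal M-j+r)-r(r-1)/2},$$
and the inner sum is precisely the one evaluated in the proof of~\eqref{subs.2} (namely~\eqref{q-binomial} at $x=q^{\mathcal M}$), equal to $(1-q^{\mathcal M})(1-q^{\mathcal M-1})\cdots(1-q^{\mathcal M-j+1})=\begin{bmatrix}\mathcal M\\ j\end{bmatrix}_q(1-q)(1-q^2)\cdots(1-q^j)$. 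Hence the coefficient of $\ch_{gr} W_{loc}\big((\mathcal M+L-2j)\omega_1+j\omega_2\big)$ on the right of~\eqref{ch.M.n.l} equals $\begin{bmatrix}\mathcal M\\ j\end{bmatrix}_q\begin{bmatrix}L\\ j\end{bmatrix}_q(1-q)(1-q^2)\cdots(1-q^j)$. Since $\{\mathcal M,L\}=\{m,k\}$ and $\mathcal M+L=m+k$, this is exactly the coefficient of $\ch_{gr} W_{loc}\big((m+k-2j)\omega_1+j\omega_2\big)$ appearing in \lemref{loc in loc}(ii), and the index ranges ($0\le j\le L=\min\{m,k\}$) agree, so~\eqref{ch.M.n.l} follows. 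Applying $-w_0$ then turns $W_{loc}(m\omega_1)\otimes W_{loc}(k\omega_1)$ into $W_{loc}(m\omega_n)\otimes W_{loc}(k\omega_n)$ and carries the modules $M\big((\mathcal M-L)\omega_1+i\omega_2,2(L-i)\omega_1\big)$ to $M\big(i\omega_{n-1}+(\mathcal M-L)\omega_n,2(L-i)\omega_n\big)$, giving~\eqref{ch.M.n.l.2}.

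No serious conceptual difficulty is expected: the argument is a reorganisation of $q$-binomial sums resting entirely on results already available (\lemref{loc in loc}, \propref{M.mu.nu}(ii), and the $q$-binomial identity~\eqref{q-binomial}). The step that most needs care is the reindexing $j=i+s$ combined with the correct application of the two $q$-binomial identities; a secondary point is to make sure \propref{M.mu.nu}(ii) is invoked only for pairs $(\nu,\lambda)$ in its stated range of validity (in particular for the degenerate instances $i\in\{L-1,L\}$ or $m=k$), which can be checked by hand should it be necessary.
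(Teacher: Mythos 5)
Your argument is correct and coincides with the paper's own proof, which likewise substitutes \propref{M.mu.nu}(ii) into the right-hand side of~\eqref{ch.M.n.l}, reindexes to read off the coefficient of each $\ch_{gr}W_{loc}((\mathcal M+L-2j)\omega_1+j\omega_2)$, applies the $q$-binomial identity~\eqref{q-binomial} at $x=q^{\mathcal M}$, and matches against \lemref{loc in loc}(ii), with~\eqref{ch.M.n.l.2} obtained via $-w_0$. Your version simply spells out the reindexing $j=i+s$ and the product identity $\begin{bmatrix}L\\ i\end{bmatrix}_q\begin{bmatrix}L-i\\ j-i\end{bmatrix}_q=\begin{bmatrix}L\\ j\end{bmatrix}_q\begin{bmatrix}j\\ i\end{bmatrix}_q$ more explicitly than the paper, which states the resulting coefficient directly.
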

\proof The proof of \lemref{tensor product} is similar to the proof of \lemref{t product}.
Using \propref{truc in local}(ii), the right hand side of \eqref{ch.M.n.l} can be written as :\\
\begin{equation}\label{omega_1} \sum_{i=0}^{L} \sum_{r=0}^{L-i} (-1)^r  \begin{bmatrix} L \\ i \end{bmatrix}_{q} \begin{bmatrix} L-i \\ r \end{bmatrix}_{q}q^{r({\mathcal M}-i) - \frac{r(r-1)}{2}} \ch_{gr} W_{loc}(({\mathcal M}+L-2i-2r)\omega_{1} + (r+i)\omega_{2}) \end{equation}
Note that the coefficient of $\ch_{gr} W_{loc}(({\mathcal M}+L-2j)\omega_1+j\omega_2)$ in the above equation is : \\
  $$ \begin{bmatrix} L \\ j \end{bmatrix}_{q} (\sum\limits_{r=0}^{j} (-1)^r  \begin{bmatrix} j \\ r \end{bmatrix}_{q}q^{r(m-j+r) - \frac{r(r-1)}{2}}). $$
Now, as in the proof of \lemref{t product}, putting $x=q^{\mathcal M}$  in \eqref{q-binomial} we get, 
\begin{equation}\label{omega_1.2}  \begin{array}{ll}  \begin{bmatrix} L \\ j \end{bmatrix}_{q}
\sum\limits_{r=0}^{j} (-1)^r \begin{bmatrix} j \\ r \end{bmatrix}_{q} q^{r({\mathcal M}-j+r) - \frac{r(r-1)}{2}}& = \begin{bmatrix} L \\ j \end{bmatrix}_{q} (1-q^{\mathcal M} ) (1-q^{\mathcal M-1}) \cdots (1-q^{{\mathcal M}-j+1})\\ &= \begin{bmatrix} L \\ j \end{bmatrix}_{q}\begin{bmatrix} \mathcal M \\ j \end{bmatrix}_{q} (1-q)(1-q^2) \cdots (1-q^j ).\end{array}\end{equation}
Substituting the coefficient of $\ch_{gr} W_{loc}(({\mathcal M}+L-2j)\omega_1+j\omega_2)$ in \eqref{omega_1} and using \eqref{omega_1.2} we see that the right hand side of \eqref{ch.M.n.l} is equal to :
$$ \sum\limits_{j=0}^L \begin{bmatrix} \mathcal M \\ j \end{bmatrix}_{q} \begin{bmatrix} L \\ j \end{bmatrix}_{q}(1-q)\cdots (1-q^{j}) \ch_{gr}  W_{loc}(({\mathcal M}+L-2j)\omega_1+j\omega_2),$$ which,  by \lemref{loc in loc}(ii), is equal to the  graded character of
$W_{loc}(m\omega_1)\otimes W_{loc}(k\omega_1)$. This completes the proof of \eqref{ch.M.n.l}. The graded character \eqref{ch.M.n.l.2} of $W_{loc}(m\omega_n)\otimes W_{loc}(k\omega_n)$ in terms of $\ch_{gr} M(\nu,\lambda)$ can be proved in the same way using \propref{truc in local}(ii), the $q$-binomial identity \eqref{q-binomial} and \lemref{loc in loc}(iii).  \endproof

\subsection{Filtration of $W_{loc}(m\omega_i)\otimes W_{loc}(k\omega_i)$ for $i=1,n$}

\begin{theorem} For $m,k \in \mathbb{Z}_{+}$, let $\mathcal M, L\in \mathbb N$ be as described in \lemref{tensor product}. Then,
\begin{itemize}
 \item[i.]  $W_{loc}(m\omega_{1}) \otimes W_{loc}(k\omega_{1})$ has a filtration by $\mathfrak{sl}_{n+1}[t]$-modules where the successive quotients are isomorphic to $$\tau_{l_r}M((\mathcal M-L)\omega_1 + r \omega_2, 2(L-r)\omega_1 ), \qquad 0\leq r\leq L, \quad 0\leq l_r\leq (L-r)r.$$
 \item[ii.] $W_{loc}(m\omega_{n}) \otimes W_{loc}(k\omega_{n})$ has a filtration by  $\mathfrak{sl}_{n+1}[t]$-modules where successive quotients are isomorphic to $$\tau_{l_r}M(r\omega_{n-1} + (\mathcal M-L) \omega_n , 2(L-r)\omega_n ) \qquad 0\leq r\leq L, \quad 0\leq l_r\leq (L-r)r.$$
\end{itemize}
\end{theorem}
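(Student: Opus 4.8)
The plan is to follow the strategy of \thmref{filter.1.2}; it suffices to prove (i), since (ii) follows by transporting the whole construction through the diagram automorphism $-w_0$ of $\mathfrak{sl}_{n+1}[t]$, which interchanges $\omega_j\leftrightarrow\omega_{n+1-j}$, sends $W_{loc}(\mu)\mapsto W_{loc}(-w_0\mu)$ and $M(\nu,\lambda)\mapsto M(-w_0\nu,-w_0\lambda)$, and turns \eqref{ch.M.n.l} into \eqref{ch.M.n.l.2}. Assume therefore $m\ge k$, so $\mathcal M=m$ and $L=k$. Let $w_1,w_2$ be the highest-weight generators of $W_{loc}(m\omega_1)$ and $W_{loc}(k\omega_1)$. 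By \lemref{generator of tensor} the tensor product is generated by $w_1\otimes w_{w_0(k\omega_1)}$, and \eqref{w_w_0} identifies $w_{w_0(k\omega_1)}$ with a scalar multiple of the Chari--Loktev basis vector $(y_\theta\otimes1)^{(k)}w_2$ of $W_{loc}(k\omega_1)$. I would set $v_{\eta,\pi_\eta}=w_1\otimes y(\eta,\pi_\eta)w_2$ for $(\eta,\pi_\eta)\in\mathfrak P(k\omega_1)$ and let $\mathcal T(\eta,\pi_\eta)$ be the $\mathfrak{sl}_{n+1}[t]$-submodule generated by $\{v_{\zeta,\pi_\zeta}:(\zeta,\pi_\zeta)\ge(\eta,\pi_\eta)\}$; exactly as in \thmref{filter.1.2} these form a descending chain whose top term is all of $W_{loc}(m\omega_1)\otimes W_{loc}(k\omega_1)$, so that
\[
\ch_{gr}\big(W_{loc}(m\omega_1)\otimes W_{loc}(k\omega_1)\big)=\sum_{(\eta,\pi_\eta)\in\mathfrak P(k\omega_1)}\ch_{gr}\,\frac{\mathcal T(\eta,\pi_\eta)}{\mathcal T(\eta^+,\pi_{\eta^+})}.
\]
Here, since patterns in $\mathfrak P(k\omega_1)$ have bounding sequence $(k,0,\dots,0)$, such a pattern is just the data of integers $0\le a_1\le\dots\le a_n\le a_{n+1}:=k$ together with, for each $j\in[n]$, a partition $\pi_\eta(j)^1$ of exactly $a_{j+1}-a_j$ parts each $\le a_j$ (the remaining overlay partitions being empty), and $y(\eta,\pi_\eta)$ is the product over $j$ of $a_{j+1}-a_j$ factors of the form $(y_{1j}\otimes t^s)$.

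The first essential step is to collapse most of the chain, i.e. to show $\mathcal T(\eta,\pi_\eta)=\mathcal T(\eta^+,\pi_{\eta^+})$ unless $a_2=\dots=a_n=k$. Since $(m\omega_1)(h_{\alpha_{ij}})=0$ whenever $i\ge2$, both $x_{\alpha_{ij}}\otimes t^s$ and $y_{\alpha_{ij}}\otimes t^s$ annihilate $w_1$, so such operators act on $v_{\eta,\pi_\eta}$ through the second tensor factor alone; combining this with $(y_{\alpha_j}\otimes t^s)w_2=0$ for $j\ge2$, the bracket rules of \eqref{[y_i_j]} (notably $[x_{\alpha_j},y_{1j}]=\pm y_{1,j-1}$ and $[y_{\alpha_j},y_{1,j-1}]=\pm y_{1j}$, the other relevant brackets vanishing), and identities of the type \eqref{1} — here $(y_{\alpha_j})^{(p)}\,y_{1,j-1}(p,\bos)\,w_2=\pm\,y_{1j}(p,\bos)\,w_2$ for $j\ge2$ — I would rewrite, whenever some $a_j<k$ with $j\ge2$, the vector $v_{\eta,\pi_\eta}$ as a $\mathfrak{sl}_{n+1}[t]$-combination of the $v_{\zeta,\pi_\zeta}$ with $(\zeta,\pi_\zeta)>(\eta,\pi_\eta)$: informally, a $y_{1j}$-factor is traded for a $y_{1,j-1}$-factor, which strictly raises the pattern in the order of \secref{POP}. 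The only surviving patterns are then $\eta^{(r)}$ with $a_1=k-r$, $a_2=\dots=a_n=k$, for $0\le r\le k$, each carrying an arbitrary overlay $\pi$ of $r$ parts each $\le k-r$; for these $y(\eta^{(r)},\pi)=(y_{\alpha_1}\otimes t^{s_1})\cdots(y_{\alpha_1}\otimes t^{s_r})$ with $k-r\ge s_1\ge\cdots\ge s_r\ge0$, of $t$-degree $l:=|\pi|\in\{0,\dots,(k-r)r\}$.

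Next I would identify $Q_{r,\pi}:=\mathcal T(\eta^{(r)},\pi)/\mathcal T((\eta^{(r)},\pi)^+)$. The same bracket computations place $(\mathfrak n^+\otimes\C[t])v_{\eta^{(r)},\pi}$ and $(\mathfrak h\otimes t\C[t])v_{\eta^{(r)},\pi}$ in $\mathcal T((\eta^{(r)},\pi)^+)$, so $Q_{r,\pi}$ is a cyclic graded highest-weight module generated by the image of $v_{\eta^{(r)},\pi}$, of weight $(\mathcal M+L-2r)\omega_1+r\omega_2=\nu+\lambda$ — with $\nu=(\mathcal M-L)\omega_1+r\omega_2$ and $\lambda=2(L-r)\omega_1$ — and bottom grade $l$. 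It then remains to verify the extra relations defining $M(\nu,\lambda)$; translated through the CV-module formalism (equations \eqref{xrs relation}/\eqref{xrs relation.2}) these become relations of the form $y_\alpha(r',s')\,v_{\eta^{(r)},\pi}\in\mathcal T((\eta^{(r)},\pi)^+)$, which I would check by expanding via the Leibniz rule and using the CV-relations satisfied by $w_1$ and by $y(\eta^{(r)},\pi)w_2$ inside the relevant $\mathfrak{sl}_2[t]$-strings (\lemref{sl2.CV.op}, \corref{lowest.wt.W.mu}). This would exhibit $Q_{r,\pi}$ as a graded quotient of $\tau_l\,M((\mathcal M-L)\omega_1+r\omega_2,\,2(L-r)\omega_1)$, a module which, by \propref{M.mu.nu}, is a fusion product of Demazure modules whenever $\nu$ has a coordinate $\ge2$.

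The argument would close by a character count. For fixed $r$ the number of surviving overlays $\pi$ with $|\pi|=l$ is the number of partitions inside an $r\times(L-r)$ box of size $l$, i.e. the coefficient of $q^l$ in $\begin{bmatrix}L\\r\end{bmatrix}_q$; summing the graded characters of the surjections $Q_{r,\pi}\twoheadrightarrow\tau_{|\pi|}M((\mathcal M-L)\omega_1+r\omega_2,2(L-r)\omega_1)$ over all surviving $(\eta^{(r)},\pi)$ and comparing with \eqref{ch.M.n.l} of \lemref{tensor product} would force every surjection to be an isomorphism, yielding the filtration in (i) with $l_r$ ranging over $0\le l_r\le(L-r)r$; (ii) then follows by $-w_0$. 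The hard part will be the two middle steps: one must organise the partition-overlaid-pattern bookkeeping so that every trade of a $y_{1j}$-factor, and every Leibniz term that is not visibly zero, is provably a combination of the basis vectors attached to strictly larger patterns — and whereas only the two roots $\alpha_1$ and $\theta$ intervene in the $\mathfrak{sl}_3$ case of \thmref{filter.1.2}, here the whole chain $\alpha_1,\alpha_1+\alpha_2,\dots,\theta$ of positive roots must be controlled simultaneously.
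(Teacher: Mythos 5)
Your proposal follows essentially the same route as the paper: reduce to $m\ge k$, index a descending filtration of $W_{loc}(m\omega_1)\otimes W_{loc}(k\omega_1)$ by the partition-overlaid-pattern basis $\mathfrak P(k\omega_1)$, collapse every step whose GT pattern has some $a_j<k$ with $j\ge 2$, recognize the surviving quotients as cyclic highest-weight modules of weight $(\mathcal M+L-2r)\omega_1+r\omega_2$ and grade $|\pi|$, and close by comparing against the graded-character identity of \lemref{tensor product}. The only genuine (and harmless) departures are cosmetic: you dispose of part (ii) by transporting (i) through $-w_0$, where the paper simply reruns the argument with $\mathfrak P(k\omega_n)$; and your collapse step trades $y_{1j}\rightsquigarrow y_{1,j-1}$ one rung at a time, where the paper uses the single identity $y_{2j}(r,\bor)\,y_{11}^{(r)}w_k=y_{1j}(r,\bor)w_k$ to pass directly to $y_{11}$-strings — same mechanism, different bookkeeping. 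One place where you are actually more explicit than the paper is the identification of the surviving quotients: you propose to verify the CV relations of $M(\nu,\lambda)$ on $v_{\eta^{(r)},\pi}$ modulo $\mathcal T((\eta^{(r)},\pi)^+)$ via Leibniz, which gives directly that each quotient is a quotient of $\tau_{|\pi|}M(\nu,\lambda)$ and lets the character-count argument close cleanly. The paper only records the weaker fact that each quotient is a quotient of the truncated Weyl module $W_m$, so your proposed verification is the step that actually makes the character comparison decisive; you do, however, only sketch this computation and flag it yourself as the hard part, so the proposal is a correct blueprint rather than a complete proof.
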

\begin{proof} The proof of this theorem is similar to proof of 
\thmref{filter.1.2}.\\
(i). Given $r \in \mathbb{N}$, let $w_r$ be the generators of $W_{loc}(r\omega_{1})$. Then by \propref{POP}, $\mathfrak{B}(r\omega_1)=\{y(\eta,\pi_\eta)w_r : (\eta,\pi_{\eta})\in \mathfrak P(r\omega_1)\}$ is an ordered basis of $W_{loc}(r\omega_1)$ where $\eta$ ranges over Gelfand Tsetlin patterns of the form $$\eta(\bop) := \begin{array}{ccccccccc} &&&&p_1&&&&\\
 &&&p_2&&0&&&\\ &&\cdots&&&&\cdots&&\\ &p_n&0&&&&&\cdots&\\ k&0&&&\cdots&&&\cdots&0
\end{array}, \text{ where } \bop = (p_n\geq p_{n-1}\geq \cdots\geq p_1).$$

\noindent Assuming $m\geq k$, for each $(\eta,\pi_\eta)\in \mathfrak{P}(k\omega_1)$, set 
\begin{align*}
v_{(\eta,\pi_\eta)}= w_m\otimes y(\eta,\pi_\eta)w_k, \qquad \qquad \qquad \\
\mathfrak B_{m,k}^{(\eta,\pi_\eta)\leq} = \{v_{(\nu,\pi_\nu)} : (\nu,\pi_\nu)\in \mathfrak{P}(k\omega_1),\, (\eta,\pi_\eta)\leq (\nu,\pi_\nu)\},
\end{align*} and let $\mathcal S(\eta,\pi_\eta)$ be the submodule of $W_{loc}(m\omega_1)\otimes W_{loc}(k\omega_1)$ generated by $\mathfrak B_{m,k}^{(\eta,\pi_\eta)\leq}$. By \lemref{generator of tensor}, $$\mathcal S(\eta_0,\pi_{\eta_0})= W_{loc}(m\omega_1)\otimes W_{loc}(k\omega_1),$$ where $\eta_0 =\eta(\bop)$ with $\bop= (k,k,\cdots,k,0)\in \mathbb N^n$. Furthermore, from the definition it follows that, 
$\mathcal S(\nu,\pi_\nu)$ is a submodule of  $\mathcal S(\eta,\pi_\eta)$ 
whenever $(\eta,\pi_\eta) \leq (\nu,\pi_\nu)$. This gives a descending chain of submodules of the $\mathfrak{sl}_{n+1}[t]$-module $W_{loc}(m\omega_1)\otimes W_{loc}(k\omega_1)$. Consequently, $$\ch_{gr} W_{loc}(m\omega_1)\otimes W_{loc}(k\omega_1) = \sum\limits_{(\eta,\pi_\eta)\in \mathfrak P(k\omega_1)} \ch_{gr} \dfrac{\mathcal S(\eta,\pi_\eta)}{ \mathcal S(\eta^+,\pi_\eta^+)}, $$ where $(\eta^+,\pi_{\eta^+})\in \mathfrak P(k\omega_1)$ is, as defined in \secref{POP}. 

Now, to prove the theorem we consider the modules $\dfrac{\mathcal S(\eta,\pi_\eta)}{ \mathcal S(\eta^+,\pi_\eta^+)}$ for $(\eta^+,\pi_{\eta^+})\in \mathfrak P(k\omega_1)$. First of all notice, as $$y_{2i}(r,\bor)y_{11}^{(r)}w_{k\omega_1} = y_{1i}(r,\bor)w_{k\omega_1}, \qquad \forall\, i\in I, \quad \bor\in \mathbb N^r,$$  whenever $\nu= \eta(\bop)$ with $\bop=(p_n\geq p_{n-1}\geq \cdots\geq p_1)\in \mathbb N^n$ and $p_i\neq k$ for some $2\leq i\leq n$,
$$v_{\eta,\pi_{\eta_n}} \in \mathcal S(\nu,\pi_\nu), \quad \text{ for } \nu = \eta(\bos)\, \text{ with } \bos =(k,k,\cdots,k,k-p_1)\in \mathbb N^n.$$ Thus, ${\mathcal S(\eta,\pi_\eta)}\subseteq { \mathcal S(\eta^+,\pi_\eta^+)}$ for all $\eta=\eta(\bop)$, whenever $\bop =(p_n,p_{n-1},\cdots,p_1)\in \mathbb N^n$ with $p_i\neq k$ for some $2\leq i\leq n$. Consequently,
\begin{equation}\label{ch.1.1}\ch_{gr} W_{loc}(m\omega_1)\otimes W_{loc}(k\omega_1) = \sum\limits_{(\eta_p,\pi_{\eta_p})\in \mathfrak P(k\omega_1)} \ch_{gr} \dfrac{\mathcal S(\eta_p,\pi_{\eta_p})}{ \mathcal S(\eta_p^+,\pi_{\eta_p}^+)}, \end{equation} where 
$\eta_p = \eta(\bop)$ with $\bop=(k,k,\cdots,k,p)\in \mathbb N^n$ for $0\leq p\leq k$.
 
Similar arguments as those used in \thmref{filter.1.2}, now show that 
$\dfrac{\mathcal S(\eta,\pi_\eta)}{ \mathcal S(\eta^+,\pi_\eta^+)}$
is a quotient of $W_{loc}((m+k)\omega_1-p\alpha_1)$, whenever $\eta = \eta_p$
with $0\leq p\leq k$.
By definition of $\mathfrak P(k\omega_1)$, for a fixed $r\in \mathbb N$, there 
exists a one-to-one correspondence between the sets 
 $\{(\eta_p, \pi_{\eta_p})\in \mathfrak P(k\omega_1) : |(\eta_p, \pi_{\eta_p})|_{11}=r \}$ and $\{(s_1\leq \cdots\leq s_p)\in \mathbb Z^k: 0\leq s_i\leq k-p,\, \sum\limits_{i=1}^p s_i=r \}$. So from standard  q-binomial theory it follows that 
$$\# \{(\eta_p, \pi_{\eta_p})\in \mathfrak P(k\omega_1) : |(\eta_p, \pi_{\eta_p})|_{11}=r \} = \text{coefficient of } q^r \text{ in } \begin{bmatrix} k \\ p \end{bmatrix}_{q}. $$
Now, comparing the \eqref{ch.1.1} with the graded character \eqref{ch.M.n.l} of $W_{loc}(m\omega_1)\otimes W_{loc}(k\omega_1)$  and observing that $\dfrac{\mathcal S(\eta_p,\pi_{\eta_p})}{ \mathcal S(\eta_p^+,\pi_{\eta_p}^+)}$ is a quotient of $W_m((m+k)\omega_1-p\alpha_1)$, we conclude that the $\mathfrak{sl}_{n+1}[t]$-module $\dfrac{\mathcal S(\eta_p,\pi_{\eta_p})}{ \mathcal S(\eta_p^+,\pi_{\eta_p}^+)}$ is isomorphic to  $ M((m-k)\omega_1+p\omega_1,2(k-p)\omega_1)$ for $1\leq p\leq k$. Since  by the characterization of $\mathfrak P(k\omega_1)$,  $|(\eta(p), \pi_{\eta(p)})|_{11}$ ranges between $0$ and $(k-r)r$ and the theorem follows in the case when $m\geq k$.

\noindent (ii) The proof in this case can be established in the same way using the basis $\mathfrak B(k\omega_n)$ of $W_{loc}(k\omega_n)$ and the graded character \eqref{ch.M.n.l.2} of $W_{loc}(m\omega_n)\otimes W_{loc}(k\omega_n)$.
\end{proof}

\begin{remark} It has been proved in \cite{23} that for $n\leq 3$ the 
 the $\mathfrak{sl}_{n+1}[t]$-modules $M(\nu,\lambda)$ have a filtration by level 2 Demazure modules. Coupled with our results, this shows that the tensor product $\mathfrak{sl}_{n+1}[t]$-modules $W_{loc}(m\omega_i)\otimes W_{loc}(k\omega_i)$ for $i=1,2$ have a filtration by level 2 Demazure modules for $n\leq 3$.
\end{remark}

%
%
%
%
%
%
\section{The Modules $M_j(\mu,\nu,\rho)$ and proof of \thmref{trunc}($i$)} \label{M_j(m.n.theta)}
In this section we complete the proof of \thmref{trunc}(i). For this purpose we consider a class of CV modules  for $\mathfrak{sl}_3[t]$,  $M_j(\mu,\nu,\rho)$, which are indexed by a non-negative integer $j$ and a triple of dominant integral weights $(\mu,\nu,\rho)$. 

\subsection{} Given a non-negative integer $j$ and  $\boldsymbol{\lambda}  = (\lambda_1,\lambda_2,\lambda_3)\in (P^+)^3$,  define a $\Phi^+$-tuple of partitions $\xi_{\Blambda,j}=(\xi^1,\xi^\theta,\xi^2)$ as follows :
$$\begin{array}{c}
\xi^\theta = (3^{\lambda_3(h_\theta)},2^{\lambda_2(h_\theta)+j},1^{\lambda_1(h_\theta)}), \qquad 
\quad \xi^i=(3^{\lambda_3(h_i)},2^{\lambda_2(h_i)},1^{\lambda_1(h_i)+j}), \qquad i\in I.
\end{array}$$
Let 
$M_j(\lambda_1, \lambda_2,\lambda_3)$ be the CV module $V(\xi_{\boldsymbol{\lambda},j})$ for $\mathfrak{sl}_3[t]$-module generated by a non-zero vector $v_{\boldsymbol{\lambda},j}$. Let $\uplambda=\lambda_1+2\lambda_2+3\lambda_3$. By definition of CV modules $v_{\Blambda,j}$ satisfies the following relations :
$$\mathfrak n^{+}[t] v_{\Blambda,j} =0; \quad (h \otimes t^{r})v_{\Blambda,j} = \delta_{r,0} (\uplambda+j\theta)(h)v_{\Blambda,j} \quad (y_{i} \otimes 1)^{(\uplambda +j\theta)(h_i)+1}v_{\lambda,j} =0, \quad  i \in I, r \in \mathbb{Z}_{+} , $$ 
\begin{align}
(y_{i} \otimes t^{(\lambda_1 + \lambda_2 +\lambda_3 +j\theta)(h_i)})v_{\Blambda,j} & =0,\quad  i \in I,\label{M.rel.1}\\
(y_\theta \otimes t^{(\lambda_1 + \lambda_2 + \lambda_3 +\frac{j\theta}{2})(h_\theta)}) v_{\Blambda,j} &= 0,\label{M.rel.2}\\
y_{\alpha}(2, (\lambda_1+2\lambda_2 + 2\lambda_3 +j\theta)(h_{\alpha})-1)v_{\Blambda,j} &= 0, \quad \alpha \in R^{+} \label{M.rel.3}\\
y_{\alpha}(3, (\lambda_1+2\lambda_2 + 3\lambda_3 +j\theta)(h_{\alpha})-2)v_{\Blambda,j} &= 0, \quad \alpha \in R^{+}.\label{M.rel.4}
\end{align} 

\label{Mjmod}
\noindent Comparing the definitions of $M_j(\lambda_1,\lambda_2,\lambda_3)$, $M(\mu,\nu)$-modules and $D(\ell, \ell\mu)$, we see that:
\begin{align*} 
M_0(\lambda_1,0,0)\cong_{\mathfrak{sl}_3[t]} W_{loc}(\lambda_1), \qquad & \qquad
M_1(0,\lambda_2,0)\cong_{\mathfrak{sl}_3[t]} D(2,2\lambda_2+\theta),\\
M_1(0,0,\lambda_3)\cong_{\mathfrak{sl}_3[t]} D(3,3\lambda_3+\theta), \qquad & \qquad 
M_0(\lambda_1,\lambda_2,0)\cong_{\mathfrak{sl}_3[t]} M(\lambda_1,2\lambda_2)
\end{align*}
\begin{remark} \label{rem.trunc} Observe that when $\lambda_2=\lambda_3=0$, the relations \eqref{M.rel.1} and \eqref{M.rel.3} are a consequence of \eqref{M.rel.2}. As a result, if $\lambda_1-j\theta\in P^+$, the module $M_j(\lambda_1-j\theta,0,0)$   is a quotient of $W_{loc}(\lambda_1)$ by the submodule generated by $(y_\theta \otimes t^{(\lambda_1-j\theta +\frac{j\theta}{2})(h_\theta)}) w_{\lambda_1}$. Hence, $M_j(\lambda_1-j\theta,0,0)$ is isomorphic 
$W_{|\lambda_1|-j}(\lambda_1)$ for each $0\leq j\leq \min\{\lambda_1(h_i): i=1,2\}$. To complete the proof of \thmref{trunc}(i), it therefore suffices to prove that $M_j(\lambda_1,0,0)$ is isomorphic as a $\mathfrak{sl}_3[t]$-module to $D(1,\lambda_1)\ast V(\theta)^{\ast j}$.   \end{remark}

We deduce this from the following theorem which is the main result of this section. 

\begin{theorem} \label{Main.thm.8} 
Given a triple of dominant integral weights $(\lambda_1,\lambda_2,\lambda_3)$,
\begin{enumerate}
\item[i.] $M_0(\lambda_1,\lambda_2,\lambda_3)  \cong_{\mathfrak{sl}_3[t]} D(3,3\lambda_3) \ast D(2,2\lambda_2)*D(1,\lambda_1)$.
\item[ii.] $M_j(\lambda_1, \lambda_2,\lambda_3) \cong_{\mathfrak{sl}_3[t]} D(3,3\lambda_3) \ast D(2,2\lambda_2)*D(1,\lambda_1)*V(\theta)^{*j}$,
for $j\geq 1$, when $\lambda_3=0$ or $|\lambda_1(h_\theta)|\leq 1$.
\end{enumerate}
\end{theorem}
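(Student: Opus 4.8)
The plan is to realise the fusion product on the right as a graded quotient of the CV module $M_j(\Blambda)$, and then to match dimensions, the second half being carried out by induction on $j$ with the base case $j=0$ being part (i). First I would record the CV data of the modules being fused: $D(1,\lambda_1)=W_{loc}(\lambda_1)=V(\bxi)$ with $\xi^\alpha=(1^{\lambda_1(h_\alpha)})$, $D(\ell,\ell\lambda)=V(\bxi)$ with $\xi^\alpha=(\ell^{\lambda(h_\alpha)})$ for $\ell=2,3$, and the adjoint evaluation module $V(\theta)=\ev_0V(\theta)=V(\bxi)$ with $\xi^\theta=(2)$ and $\xi^{\alpha_i}=(1)$ for $i\in I$ (one checks directly that $V(\theta)\cong D(2,\theta)$). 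Comparing with $\xi_{\Blambda,j}$ as defined in \secref{Mjmod}, the tuple defining $M_j(\Blambda)$ is, coordinate by coordinate, the decreasing rearrangement of the concatenation of the data of $D(3,3\lambda_3)$, $D(2,2\lambda_2)$, $D(1,\lambda_1)$ and $j$ copies of $V(\theta)$. Fixing distinct $a_1,\dots,a_{j+3}\in\mathbb C$ and writing
$$F_j:=D(3,3\lambda_3)^{a_1}\ast D(2,2\lambda_2)^{a_2}\ast D(1,\lambda_1)^{a_3}\ast V(\theta)^{a_4}\ast\cdots\ast V(\theta)^{a_{j+3}},$$
I would then verify, using \lemref{fusion_def} to rewrite the action of $y_\alpha\otimes t^p$ on the cyclic vector $\bar v$ of $F_j$ as that of $y_\alpha\otimes\prod_i(t-a_i)$ distributed over the tensor factors together with the CV relations valid in each factor, that $\bar v$ satisfies the relations \eqref{M.rel.1}--\eqref{M.rel.4}. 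This is the general principle that a fusion product of CV modules $V(\bxi^{(1)}),\dots,V(\bxi^{(s)})$ is a graded quotient of $V(\sqcup_i\bxi^{(i)})$, and it yields a surjective graded $\mathfrak{sl}_3[t]$-homomorphism $M_j(\Blambda)\twoheadrightarrow F_j$; in particular $\dim M_j(\Blambda)\ge\dim F_j=\dim D(3,3\lambda_3)\,\dim D(2,2\lambda_2)\,\dim W_{loc}(\lambda_1)\cdot 8^{\,j}$.

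For $j=0$ the tuple $\xi_{\Blambda,0}$ is exactly the concatenation of the data of $D(3,3\lambda_3)$, $D(2,2\lambda_2)$, $D(1,\lambda_1)$, so part (i) reduces to the opposite inequality $\dim M_0(\Blambda)\le\dim D(3,3\lambda_3)\,\dim D(2,2\lambda_2)\,\dim W_{loc}(\lambda_1)$. When $\lambda_3=0$ this is \propref{M.mu.nu}(i) via $M_0(\lambda_1,\lambda_2,0)\cong M(\lambda_1,2\lambda_2)$ (with the small cases of $\lambda_1$ not meeting the hypothesis $\nu(h_j)\ge2$ handled separately), and the extension to $\lambda_3\ne0$ is the analogous statement for the family of cyclic $\mathfrak{sl}_3[t]$-modules of \cite{MR3704743}, whose graded characters and hence dimensions are known. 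This establishes (i).

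For $j\ge1$ I would argue by induction on $j$, the aim being the estimate $\dim M_j(\Blambda)\le\dim F_j$, which together with the surjection of the first paragraph upgrades to the isomorphism $M_j(\Blambda)\cong F_j$. I expect this to come from a short exact sequence of CV modules in direct analogy with \thmref{trunc}(ii) and \lemref{trunc1}--\lemref{trun2},
$$0\longrightarrow\tau_s\,M_{j-1}(\Blambda^{-})\longrightarrow M_{j-1}(\Blambda^{+})\longrightarrow M_j(\Blambda)\longrightarrow 0,$$
where $\Blambda^{+},\Blambda^{-}$ differ from $\Blambda$ by $\theta$ in the first slot and $s$ is an explicit grade shift: the right-hand arrow is the canonical map tightening relation \eqref{M.rel.2} by one degree, and its kernel is generated by the single element $(y_\theta\otimes t^{N-1})v$ with $N=(\lambda_1+\lambda_2+\lambda_3+\tfrac{j}{2}\theta)(h_\theta)$, which one identifies, after the grade shift, with another member of the family using Garland's identities (\lemref{Garland equation}). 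Feeding in the induction hypothesis and adding dimensions yields the estimate and, in fact, the exact formula for $\ch_{gr}M_j(\Blambda)$; alternatively one can produce directly from the POP/Chari--Loktev spanning set of \thmref{popth} a spanning set of $M_j(\Blambda)$ of cardinality $\dim F_j$. The hypothesis ``$\lambda_3=0$ or $|\lambda_1(h_\theta)|\le1$'' enters precisely here: it guarantees that $\xi^\theta$ in $\xi_{\Blambda,j}$ never contains a part of size $3$ together with two or more parts of size $1$, and that non-consecutive configuration is exactly the one in which the Garland identities fail to collapse the higher relations \eqref{M.rel.3}--\eqref{M.rel.4} to \eqref{M.rel.1}--\eqref{M.rel.2} on the kernel, so that the kernel is no longer a single member of the family. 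Controlling this size-$3$ versus size-$1$ interaction in the kernel computation is the main obstacle; everything else is formal, and once $M_j(\Blambda)\cong F_j$ is proved the independence of this fusion product from the evaluation parameters $a_1,\dots,a_{j+3}$ is immediate, since $M_j(\Blambda)$ is defined without reference to them.
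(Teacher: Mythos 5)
The conceptual skeleton — surjection $M_j(\Blambda)\twoheadrightarrow F_j$ from \lemref{fusion_def} plus the CV relations, then a reverse dimension bound by induction on $j$ via short exact sequences — is indeed the paper's strategy, and your identification of the CV data for the factors and of the precise combinatorial meaning of the hypothesis (it rules out $\xi^\theta$ having a part $3$ together with at least two parts $1$) is correct. Part (i) is also essentially the paper's argument, which makes the appeal to Naoi's result explicit by writing out a full presentation (\lemref{naoi.cv}) of $D(3,3\rho)\ast D(2,2\nu)\ast D(1,\mu)$ and comparing generators and relations directly, rather than invoking a character formula.

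For part (ii), however, the short exact sequence you write down will not carry the induction. Your map $M_{j-1}(\Blambda^+)\to M_j(\Blambda)$ with $\Blambda^+=(\lambda_1+\theta,\lambda_2,\lambda_3)$ is a legitimate CV quotient map (it tightens only relation \eqref{M.rel.2}, the others being unchanged), but it moves you \emph{out} of the region where the theorem is available inductively: $(\lambda_1+\theta)(h_\theta)=\lambda_1(h_\theta)+2\ge 2$, so if $\lambda_3\neq 0$ the module $M_{j-1}(\Blambda^+)$ violates exactly the hypothesis ``$\lambda_3=0$ or $|\lambda_1(h_\theta)|\le 1$'', and you cannot compute its dimension by induction. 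Moreover the kernel cannot be $\tau_s M_{j-1}(\Blambda^-)$ with $\Blambda^-=(\lambda_1-\theta,\lambda_2,\lambda_3)$, since $\lambda_1-\theta$ is not dominant precisely under your hypothesis on $\lambda_1$; a dimension count shows the candidate is $\tau_s M_{j-1}(\Blambda)$ itself, but that identification is something you would still have to prove. The paper instead uses, for $j\ge 2$, the surjection $\phi_j^+\colon M_j(\lambda_1,\lambda_2,\lambda_3)\to M_{j-2}(\lambda_1,\lambda_2+\theta,\lambda_3)$, which drops $j$ by two while modifying only the middle weight $\lambda_2$ — so the hypothesis on $(\lambda_1,\lambda_3)$ is preserved — and its kernel is \emph{not} a single member of the family but has a two-step filtration $0\subset U_1^j\subset U_2^j=\ker\phi_j^+$ with both subquotients again in the family and again satisfying the hypothesis (\thmref{jgeq2} and \thmref{Case III}); the $j=1$ base cases (\thmref{dim_V_geq.1} and its corollary) are handled separately. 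This is the content you would need to supply: a filtration of the kernel by modules that remain in the allowed range, with the attendant Garland-relation computations showing that each filtration quotient is itself an $M$-module.
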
 

\noindent Beginning with \lemref{dim V geq}, we now proceed to prepare for the proof of \thmref{Main.thm.8}.

\begin{lemma}\label{dim V geq}    There exist surjective $\mathfrak{sl}_3[t]$-module homomorphism.
 $$\delta: M_j(\lambda_1,\lambda_2,\lambda_3) \longrightarrow D(3,3\lambda_3)^{a_1}\ast D(2,2\lambda_2)^{a_2} \ast D(1, \lambda_1)^{a_3} \ast V(\theta)^{a_4} \ast  \cdots \ast V(\theta)^{a_{j+3}}$$ for all $\boa=(a_1,a_2,\cdots,a_{j+2},a_{j+3})\in \mathbb C^{j+3}$,  such that $a_i\neq a_j$ for $i\neq j$.
Consequently, $\dim M_j(\lambda_1,\lambda_2,\lambda_3) \geq (\prod\limits_{j=1}^3\dim D(j,j\lambda_j)) (\dim V(\theta))^{j}.$
\end{lemma}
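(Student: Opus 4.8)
The plan is to realize the fusion product as a quotient of $M_j(\lambda_1,\lambda_2,\lambda_3)$ by matching the defining relations of the CV-module. Fix $\boa=(a_1,\dots,a_{j+3})\in\C^{j+3}$ with distinct entries, write
$$W=D(3,3\lambda_3)^{a_1}\otimes D(2,2\lambda_2)^{a_2}\otimes D(1,\lambda_1)^{a_3}\otimes V(\theta)^{a_4}\otimes\cdots\otimes V(\theta)^{a_{j+3}},$$
let $v$ be the tensor product of the highest weight generators of the factors, and let $v_F$ be the image of $v$ in the associated graded module $F=D(3,3\lambda_3)^{a_1}\ast\cdots\ast V(\theta)^{a_{j+3}}$, so that $F$ is a graded cyclic $\mathfrak{sl}_3[t]$-module generated by $v_F\in F[0]$. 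By \secref{Mjmod}, $M_j(\lambda_1,\lambda_2,\lambda_3)=V(\xi_{\Blambda,j})$ is the cyclic $\mathfrak{sl}_3[t]$-module on a generator $v_{\Blambda,j}$ presented by the $\mathfrak n^+[t]$-annihilation, the $\mathfrak h$-action relation, the integrability relations and \eqref{M.rel.1}--\eqref{M.rel.4}; hence it is enough to check that $v_F$ satisfies all of these, for then $v_{\Blambda,j}\mapsto v_F$ extends to a surjective graded homomorphism $\delta$. The $\mathfrak n^+[t]$-annihilation is immediate from the Leibniz rule since every factor's generator is a highest weight vector; the $\mathfrak h$-weight of $v$ is the sum of the highest weights of the factors, namely $3\lambda_3+2\lambda_2+\lambda_1+j\theta=\uplambda+j\theta$; the integrability relations hold because $F$ is a finite-dimensional $\mathfrak g$-module of highest weight $\uplambda+j\theta$; and for $r\ge1$ the element $h\otimes t^r$ preserves $W[0]=\C v$, so $(h\otimes t^r)v$ is a scalar multiple of $v$ and hence maps to $0$ in $F[r]$.

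The substance is verifying \eqref{M.rel.1}--\eqref{M.rel.4} on $v_F$, and the key device is that for any $N\ge1$ and any monic $f\in\C[t]$ of degree $N$ one has
$$(y_\alpha\otimes t^N)v\;\equiv\;(y_\alpha\otimes f(t))v\pmod{W[N-1]},$$
since the difference is a linear combination of vectors $(y_\alpha\otimes t^m)v\in W[m]$ with $m<N$ (this is the observation underlying \lemref{fusion_def}). So it suffices to exhibit a monic $f$ of degree $N$ with $(y_\alpha\otimes f(t))v=0$ in $W$. For \eqref{M.rel.1} and \eqref{M.rel.2} the exponent is $N=(\lambda_1+\lambda_2+\lambda_3)(h_\alpha)+j$ for $\alpha\in\Phi^+$ (using $\theta(h_{\alpha_i})=1$ and $\tfrac12 j\,\theta(h_\theta)=j$), and I would take
$$f(t)=(t-a_1)^{\lambda_3(h_\alpha)}(t-a_2)^{\lambda_2(h_\alpha)}(t-a_3)^{\lambda_1(h_\alpha)}(t-a_4)\cdots(t-a_{j+3}),$$
chosen to vanish at the parameter of the $D(\ell,\ell\lambda_\ell)$-factor to order $\lambda_\ell(h_\alpha)$ and at each $V(\theta)$-parameter to order one; its degree is then $N$. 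Expanding $y_\alpha\otimes f(t)$ on $v$ by the Leibniz rule, the summand hitting a $D(\ell,\ell\lambda_\ell)^{a}$-factor is $y_\alpha\otimes f(t+a)$ acting on that factor's generator inside $D(\ell,\ell\lambda_\ell)$, and since $t^{\lambda_\ell(h_\alpha)}\mid f(t+a)$ it vanishes by the Demazure relation $(y_\alpha\otimes t^q)u_\ell=0$ for $q\ge\lambda_\ell(h_\alpha)$; the summand hitting a factor $V(\theta)^{a_m}=\ev_{a_m}V(\theta)$ equals $f(a_m)\,(y_\alpha v_\theta)=0$ because $(t-a_m)\mid f$. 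Hence $(y_\alpha\otimes f(t))v=0$ and \eqref{M.rel.1}, \eqref{M.rel.2} hold on $v_F$.

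The main obstacle is \eqref{M.rel.3} and \eqref{M.rel.4}, which involve the divided-power sums $y_\alpha(2,s)$ and $y_\alpha(3,s)$. The approach is the same in spirit: using $\sum_s y_\alpha(r,s)u^s=\tfrac1{r!}\big(\sum_{p\ge0}(y_\alpha\otimes t^p)u^p\big)^r$ one reduces, modulo lower filtration of $W$, to products $(y_\alpha\otimes f_1(t))\cdots(y_\alpha\otimes f_r(t))$ with $\sum_i\deg f_i=s$, then distributes the $(t-a_p)$-factors among $f_1,\dots,f_r$ so that each of the $r$ applications of $y_\alpha\otimes f_i(t)$ is annihilated by the appropriate factor of $W$ via the Demazure relations $(y_\alpha\otimes t^q)u_\ell=0$ or the evaluation relations on the $V(\theta)$-factors; this is precisely the bookkeeping carried out with the CV-module machinery of \cite{MR3296163} and \cite{24}. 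What must be verified is that the exponents $(\lambda_1+2\lambda_2+2\lambda_3+j\theta)(h_\alpha)-1$ and $(\lambda_1+2\lambda_2+3\lambda_3+j\theta)(h_\alpha)-2$ are exactly those forced by splitting the partition $\xi_{\Blambda,j}^\alpha$ into the parts contributed by $D(3,3\lambda_3)$, $D(2,2\lambda_2)$, $D(1,\lambda_1)$ and the $j$ copies of $V(\theta)$ -- that is, that $\xi_{\Blambda,j}$ is the concatenation of the $\mu$-compatible data of these factors -- after which every term in the expansion is killed. Once \eqref{M.rel.1}--\eqref{M.rel.4} are established, $\delta$ is a well-defined surjection; and since the fusion product has the same dimension as $W$, namely $\big(\prod_{\ell=1}^3\dim D(\ell,\ell\lambda_\ell)\big)(\dim V(\theta))^j$, while a surjection cannot increase dimension, the inequality $\dim M_j(\lambda_1,\lambda_2,\lambda_3)\ge\big(\prod_{\ell=1}^3\dim D(\ell,\ell\lambda_\ell)\big)(\dim V(\theta))^j$ follows.
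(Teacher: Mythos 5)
Your proposal follows the same overall strategy as the paper's proof: present the fusion product as a quotient of $W_{loc}(\lambda_1+2\lambda_2+3\lambda_3+j\theta)$ and check that the image of the highest weight generator satisfies the defining CV-relations of $M_j(\lambda_1,\lambda_2,\lambda_3)$, so that the projection factors through $M_j$ and gives $\delta$; the dimension bound is then immediate since the associated graded has the same dimension as the tensor product. Your handling of the easy relations (highest weight, $\mathfrak h$-weight, $h\otimes t^r$ for $r\ge1$, integrability, and \eqref{M.rel.1}/\eqref{M.rel.2} via the monic-polynomial substitution $(y_\alpha\otimes t^N)v\equiv(y_\alpha\otimes f(t))v \pmod{W[N-1]}$) is correct and more cleanly organized than the paper's, and your observation that $\xi_{\Blambda,j}^\alpha$ is the sorted concatenation of the $\xi^\alpha$-data of $D(3,3\lambda_3)$, $D(2,2\lambda_2)$, $D(1,\lambda_1)$ and the $j$ copies of $V(\theta)$ is exactly the structural fact that makes the surjection exist.

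However, you leave the $r=2$ and $r=3$ relations \eqref{M.rel.3} and \eqref{M.rel.4} as a sketch, and these are the whole content of the lemma; everything else is routine. The ``distribute the $(t-a_p)$-factors'' idea is right in spirit, but it is not yet a proof: one must actually trim the sum using \eqref{M.rel.1}/\eqref{M.rel.2}, apply \lemref{fusion_def} to localize the surviving terms on whichever factor's own Demazure relation $y_\alpha(r,\cdot)v_\ell=0$ finishes the argument, and keep careful account of how the $r$ applications split among the factors. The paper carries out exactly this computation for \eqref{M.rel.3} with $\alpha=\alpha_i$, landing on the $D(1,\lambda_1)$-factor and invoking $y_i(2,\lambda_1(h_i)-1+2p)v_1=0$; it is itself terse about the other roots and about \eqref{M.rel.4} (the $r=3$ relation is a separate defining relation since $\xi^\alpha$ has parts of size $3$, and is not formally implied by the $r=1,2$ ones). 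Supplying those computations explicitly, which your framework is well set up to do, would complete the argument and in fact tighten the write-up relative to the original.
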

\begin{proof} Let $v_j \in D(j,j\lambda_j)$ be the image of highest weight vector $w_{j\lambda_j}$ of $W_{loc}(j \lambda_j)$ for $j=1,2,3$ and let $v_\theta $ be the highest weight vector of $V(\theta).$
From the defining relations of $D(3, 3\lambda_3)$, $D(2, 2\lambda_2)$, $D(1,\lambda_1)$, and $V(\theta)$ it follows that,
 \begin{align}
     &(y_i \otimes t^{\lambda_j(h_i)}) v_j=0,\,   \, (y_{\theta} \otimes t^{\lambda_j(h_{\theta})})v_j=0, \qquad \text{ for } 1\leq j\leq 3, \, i\in I \label{i1}  \\ 
      &(y_{\alpha} \otimes t) v_\theta=0, \quad \forall \alpha \in \Phi^+, \, (y_{i} \otimes 1)^2v_\theta =0 \quad \forall i\in I \label{i2}
  \end{align}
Hence there exists a surjective $\mathfrak{sl}_3[t]$-module homomorphism,
  $$W_{loc}(3\lambda_3+2\lambda_2+ \lambda_1+ j\theta) \longrightarrow D(3,3\lambda_3)^{a_1}\ast D(2,2\lambda_2)^{a_2} \ast D(1,\lambda_1)^{a_3} \ast V(\theta)^{a_4} \ast \cdots \ast V(\theta)^{a_{j+3}}.$$ Further, using \lemref{fusion_def} and the relations \eqref{i1} and \eqref{i2}, we see that :
\begin{equation} \begin{array}{l}
      (y_{i} \otimes t^{(\lambda_1 + \lambda_2 +\lambda_3 +j\theta)(h_i)})(v_3 \ast v_2 \ast v_1 \ast \underbrace{v\ast \cdots \ast v}_{j-times})=0, \quad \text{ for } i\in I\\
     (y_{\theta} \otimes t^{(\lambda_1 + \lambda_2 + \lambda_3 +\frac{j\theta}{2})(h_{\theta})})(v_3 \ast v_2 \ast v_1 \ast \underbrace{v\ast \cdots \ast v}_{j-times})=0 \label{i2p3}\end{array}\end{equation} 
 Thus to complete the proof of the lemma,  it suffices to show that:
\begin{eqnarray}
     &&y_{\alpha}(2, (\lambda_1 + 2\lambda_2 +2\lambda_3 +j\theta)(h_{\alpha})-1)(v_3 \ast v_2 \ast v_1 \ast \underbrace{v\ast \cdots \ast v}_{j-times})=0, \ \ \text{ for } \, \alpha \in \Phi^{+}. \qquad \qquad \label{i2p4}  \end{eqnarray}
Owing to \eqref{i2p3}, we see that for $i\in I$, the left hand side of \eqref{i2p4} is equal to \begin{equation}\sum\limits_{k=0}^{\lambda_1(h_i)+j-1}(y_{i} \otimes t^{(\lambda_2+\lambda_3)(h_i)+k}) (y_{i} \otimes t^{(\lambda_1+\lambda_2+\lambda_3)(h_i)+j-1-k})(v_3 \ast v_2 \ast v_1 \ast \underbrace{v\ast \cdots \ast v}_{j-times})\label{fusion.M.1.2.3}\end{equation} Further, since for $s=2,3$,  $(y_i \otimes t^r) v_s=0$, for $r\geq \lambda_s(h_i)$ and $y_\alpha\otimes t^r v=0$, for $r>0$, 
 we see that using \lemref{fusion_def},
\eqref{fusion.M.1.2.3} is equal to :
$$v_3 \otimes v_2\otimes (Y.v_1) \otimes \underbrace{v \otimes \cdots \otimes v}_{j-times}
$$ where
\begin{align*}
     Y.v_1 =
\sum\limits_{k=0}^{\lambda_1(h_i)+j-1} (y_{1} \otimes (t-a_1)^{k}
\prod\limits_{p=2}^{3}(t-a_p)^{\lambda_p(h_i)})(y_{1} \otimes (t-
a_1)^{\lambda_1(h_i)+j-1-k}\prod\limits_{p=2}^3
 (t-a_p)^{\lambda_p(h_i)} )(v_1) 
\end{align*}
However, from the defining relations of $D(1,\lambda_1)$ we know that 
$$y_i(2, \lambda_1(h_i)+2p)v_1 =0, \qquad \forall \, p\geq 0.$$ Therefore using the relation $y_i(1, \lambda_1(h_i)+2p)v_1 =0$ for $p\geq 0$, we have 
$\sum\limits_{k=0}^{\lambda_1(h_i) -1}(y_i \otimes t^{k+p})(y_i \otimes t^{\lambda_1(h_i)-k+p})= 0, \text{ for } p \geq 0$. In particular, this gives, 
      $$ \sum\limits_{k=0}^{\lambda_1(h_i)+j-1} (y_{1} \otimes (t)^{k}
\prod\limits_{p=2}^{3}(t+a_1-a_p)^{\lambda_p(h_i)})(y_{1} \otimes (t)^{\lambda_1(h_i)+j-1-k}\prod\limits_{p=2}^3  (t+a_1-a_p)^{\lambda_p(h_i)} )(v_1)
  \in     F^{K_i-1}(V),$$ where $K_i=\lambda_1 + 2\lambda_2 +2\lambda_3 +j\theta)(h_i)-1$ and 
$$ F^{K_i-1}(V) = \sum\limits_{r=0}^{K_i-1}\bu(\mathfrak{sl}_3[t])[r]
v_3\otimes v_2\otimes v_1\otimes 
\underbrace{v \otimes \cdots \otimes v}_{j-times}.$$
 Thus using the definition of fusion product we see that \eqref{i2p4} follows 
for $\alpha=\alpha_i, i\in I$. Using similar arguments, it can be shown that  
\eqref{i2p4} holds for all $\alpha \in \Phi^+$. This completes the proof of the 
lemma. \end{proof}

\subsection{The module $M_0(\lambda_1,\lambda_2,\lambda_3)$} 

\begin{lemma}\label{naoi.cv} Let $\mu, \nu, \rho$ be dominant integral weights of $\mathfrak{sl}_3(\mathbb C).$ Then the fusion product module,  $D(3,3\rho)*D(2,2\nu)*D(1,1\mu)$ is cyclic $\mathfrak{sl}_3[t]$-module generated by a vector $v_{\xi}$ with the following defining relations :
\begin{equation}\label{naoi.realization}
\begin{array}{rl}
\mathfrak n^{+}[t] v_{\xi} =0; \quad (h \otimes t^{r})v_{\xi} &= \delta_{r,0} (\mu+2\nu+3\rho)(h)v_{\xi}, \qquad  r \in \mathbb{Z}_{+} \\ (y_{i} \otimes 1)^{(\mu+2\nu+3\rho)(h_i)+1}v_{\xi} &=0, \qquad  i \in I, \\
(y_{i} \otimes t^{(\mu+\nu+\rho)(h_i)})v_{\xi} & =0,\qquad  i \in I,\\
y_{i}(2, (2\rho + 2\nu +\mu)(h_{i})-1)v_{\xi} &= 0, \qquad i \in I.
\end{array}\end{equation} \end{lemma}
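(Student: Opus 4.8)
The plan is to identify the cyclic $\mathfrak{sl}_3[t]$-module $N$ presented by the relations \eqref{naoi.realization} with the fusion product $D(3,3\rho)*D(2,2\nu)*D(1,\mu)$ by building a surjection from $N$ onto the fusion product and then matching dimensions. First I would observe that the first line of \eqref{naoi.realization} together with $(y_i\otimes 1)^{(\mu+2\nu+3\rho)(h_i)+1}v_\xi = 0$ is exactly the defining system of $W_{loc}(\mu+2\nu+3\rho)$, so $N$ is a finite-dimensional graded quotient of it; in particular $\dim N \le \dim W_{loc}(\mu+2\nu+3\rho)$.

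Next I would verify that the canonical generator $v_3*v_2*v_1$ of $F := D(3,3\rho)^{a_1}*D(2,2\nu)^{a_2}*D(1,\mu)^{a_3}$, for a generic triple of distinct parameters $a_1,a_2,a_3$, satisfies every relation of \eqref{naoi.realization}: the first three lines hold because $F$ is itself a quotient of $W_{loc}(\mu+2\nu+3\rho)$, while the relations $(y_i\otimes t^{(\mu+\nu+\rho)(h_i)})(v_3*v_2*v_1) = 0$ and $y_i(2,(2\rho+2\nu+\mu)(h_i)-1)(v_3*v_2*v_1) = 0$ for $i\in I$ are precisely the $j=0$ instance of the computation already carried out in the proof of \lemref{dim V geq}; that is, they follow from \lemref{fusion_def}, the defining relations $(y_i\otimes t^{\lambda_s(h_i)})v_s = (y_\theta\otimes t^{\lambda_s(h_\theta)})v_s = 0$ of the Demazure modules $D(s,s\lambda_s)$ for $s=1,2,3$, and the Garland identity \lemref{Garland equation}. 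This yields a surjective graded $\mathfrak{sl}_3[t]$-homomorphism $N \twoheadrightarrow F$.

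Since a fusion product has the same dimension as the underlying tensor product, $\dim F = \dim D(3,3\rho)\cdot\dim D(2,2\nu)\cdot\dim D(1,\mu)$, so it remains to establish the reverse inequality $\dim N \le \dim D(3,3\rho)\cdot\dim D(2,2\nu)\cdot\dim D(1,\mu)$; the surjection above is then forced to be an isomorphism. I would prove this bound by exhibiting a spanning set of $N$ of that cardinality: the relation $(y_i\otimes t^{(\mu+\nu+\rho)(h_i)})v_\xi = 0$ caps the admissible $t$-powers of $y_{\alpha_i}$ acting on $v_\xi$, the two-fold Garland relations $y_i(2,\cdot)v_\xi = 0$ (unwound via \lemref{Garland equation}) control products of two such elements, and finite-dimensionality over $\mathfrak{sl}_3$ bounds the degree-zero exponents; reorganising the resulting PBW monomials in $\bu(\mathfrak n^-[t])$ along the pattern of the Chari-Loktev basis \cite{MR2271991,pop} should produce a spanning family whose size is $\dim D(3,3\rho)\cdot\dim D(2,2\nu)\cdot\dim D(1,\mu)$. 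Alternatively, one may invoke the known presentation of a fusion product of level-$\ell$ Demazure modules $D(\ell,\ell\lambda_\ell)$ of $\mathfrak{sl}_{n+1}[t]$ and check that for $\mathfrak g=\mathfrak{sl}_3$ it collapses to \eqref{naoi.realization}.

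The main obstacle is precisely this dimension bound, and within it the fact that \eqref{naoi.realization} carries no relations indexed by the highest root $\theta$, whereas the natural Chari-Venkatesh presentation $V(\bxi)$ of the fusion product --- with $\xi^\alpha$ the decreasingly sorted string $(3^{\rho(h_\alpha)},2^{\nu(h_\alpha)},1^{\mu(h_\alpha)})$ --- does include them. One must therefore show that the $\theta$-relations are consequences of the $\alpha_1$- and $\alpha_2$-relations inside $\bu(\mathfrak{sl}_3[t])$, exploiting $\theta=\alpha_1+\alpha_2$; this is where the bracket bookkeeping becomes genuinely involved, and it is the reason the clean presentation \eqref{naoi.realization} is special to $\mathfrak{sl}_3$ rather than valid for general $\mathfrak{sl}_{n+1}$.
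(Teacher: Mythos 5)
Your framework---build a surjection $N\twoheadrightarrow F$ from the presented module onto the fusion product, then match dimensions---is a legitimate strategy, and you have correctly identified where all the difficulty sits: proving $\dim N\le\dim D(3,3\rho)\cdot\dim D(2,2\nu)\cdot\dim D(1,\mu)$, which in particular requires showing that the absent $\theta$-relations are already consequences of the $\alpha_1$- and $\alpha_2$-relations. But you never actually close this gap. ``Reorganising the resulting PBW monomials\ldots should produce a spanning family'' is not an argument; it is precisely the content that has to be proved, and carrying it out is a genuine piece of combinatorial representation theory that amounts to reproving a known theorem from scratch. Your ``alternatively, one may invoke the known presentation of a fusion product of level-$\ell$ Demazure modules'' points in the right direction but is left unsourced and unexecuted, so it does not repair the gap either.

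The paper's own proof is a two-step citation that completely sidesteps your obstacle. Naoi's Theorem B (\cite{MR3704743}) already gives \emph{exactly} the presentation \eqref{naoi.realization}---with no $\theta$-relations and only two-fold Garland relations on the simple roots---for fusion products $D(3,3\rho)*D(2,2\nu)*D(1,\mu)$ when $\mu,\nu,\rho$ are \emph{fundamental} weights of $\mathfrak{sl}_3$; that result is where the hard spanning-set/dimension work lives, and it is not redone here. The general case then reduces to the fundamental one via Chari--Venkatesh's factorization $D(\ell,\ell\lambda)\cong D(\ell,\ell\omega_1)^{*m_1}*D(\ell,\ell\omega_2)^{*m_2}$ (\cite[Proposition 5.1]{MR3296163}), applied at each level $\ell=1,2,3$, together with associativity of fusion. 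In short: the dimension bound you flag as the ``main obstacle'' is a theorem of Naoi, not something to be re-derived via PBW bookkeeping, and your proposal is incomplete until you either cite that result or supply the spanning-set argument in full.
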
  
\proof In the special case when  $\mu, \nu, \rho$ are fundamental weights of $\mathfrak{sl}_3(\mathbb C)$, the lemma is a restatement of \cite[Theorem B]{MR3704743}. On the other hand, for an arbitrary dominant integral weight $\lambda= \sum\limits_{i=1}^2 \ell m_i\omega_i$ of $\mathfrak{sl}_3(\mathbb C)$, it follows from \cite[Proposition 5.1]{MR3296163}  that  
$$D(\ell,\lambda) \cong_{\mathfrak{sl}_3[t]} D(\ell, \ell\omega_1)^{\ast m_1} \ast D(\ell, \ell\omega_2)^{\ast m_2}.$$ The two results \cite[Theorem B]{MR3704743} and \cite[Proposition 5.1]{MR3296163} together imply the lemma.\endproof

\noindent {\bf{Proof of}} \thmrefb{Main.thm.8}(i): Using the presentation of  $D(3,3\lambda_3)*D(2,2\lambda_2)*D(1,\lambda_1)$ given by \eqref{naoi.realization} and the  
 defining relations of $M_{0}(\lambda_1,\lambda_2,\lambda_3)$, it is easy to see that there exists a  surjective $\mathfrak{sl}_3[t]$-module homomorphism 
$$\alpha^{+}: D(3,3\lambda_3)*D(2,2\lambda_2)*D(1,\lambda_1)\longrightarrow M_{0}(\lambda_1,\lambda_2,\lambda_3)$$ such that $\alpha^+(v_{\xi})=v_{\lambda,0}$.
On the other hand by \lemref{dim V geq}, the fusion product module, $D(3,3\lambda_3)*D(2,2\lambda_2)*D(1,\lambda_1)$ is a quotient of $M_0(\lambda_1, \lambda_2,\lambda_3)$. Hence it follows that, 
\begin{equation} \dim M_0(\lambda_1, \lambda_2,\lambda_3) = \dim D(3,3\lambda_3) \dim D(2,2\lambda_2) \dim D(1,\lambda_1) \label{M_0.dim}\end{equation} and consequently, 
$$M_0(\lambda_1, \lambda_2,\lambda_3)\cong_{\mathfrak{sl}_{n+1}[t]} D(3,3\lambda_3)*D(2,2\lambda_2)*D(1,\lambda_1).$$ This completes the proof of \thmref{Main.thm.8}(i).  \endproof

\subsection{The modules $M_1(\lambda_1,\lambda_2,\lambda_3)$ for $|\lambda_1|\leq 1$}
%
%
%
%
\begin{theorem}\label{dim_V_geq.1} Let $\lambda_1 ,\lambda_2, \lambda_3 \in P^+$
with $\lambda_1(h_\theta)\leq 1$. \\
(i). When $\lambda_1=0$ and  $\lambda_2(h_1) \geq 1$, we have the short exact sequence :  \begin{equation} \label{sh.1.0} 0 \longrightarrow \tau_{s_1}M_0(\omega_1,\lambda_2 -\omega_1 +\omega_2,\lambda_3) \longrightarrow M_{1}(0,\lambda_2,\lambda_3) \overset{\phi^+}{\longrightarrow} M_0 (\omega_2, \lambda_2 -\omega_1,\lambda_3 + \omega_1)\rightarrow 0\end{equation} where $s_1= (\lambda_2+\lambda_3)(h_1)$. \\
\noindent (ii). When $\lambda_1=\omega_1$, we have the short exact sequence : \begin{equation} \label{sh.2.0} 0 \longrightarrow \tau_{s_1} M_0(0, \lambda_2 +\omega_2, \lambda_3) \longrightarrow M_1 (\omega_1, \lambda_2,\lambda_3) \overset{\psi^+}{\longrightarrow} M_0 (\omega_2, \lambda_2 +\omega_1, \lambda_3)\rightarrow 0, \end{equation} 
where $s_1= (\lambda_2+\lambda_3)(h_1)+1$.
\end{theorem}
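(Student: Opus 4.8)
Here is a plan for proving the two short exact sequences.

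The plan is to establish both short exact sequences by the same three-step argument, run in parallel for (i) and (ii): first produce a surjection from $M_1(\cdots)$ onto the displayed quotient and pin down its kernel as the cyclic submodule generated by one explicit vector; then show that submodule is a quotient of the shifted module $\tau_{s_1}M_0(\cdots)$ on the left; and finally force all the maps to be isomorphisms, and the sequence to be exact, by a dimension count. The organising observation is that in both parts the quotient $M_0(\cdots)$ has the \emph{same} highest weight as $M_1(\cdots)$ — in (i), $\omega_2+2(\lambda_2-\omega_1)+3(\lambda_3+\omega_1)=2\lambda_2+3\lambda_3+\theta$, and in (ii), $\omega_2+2(\lambda_2+\omega_1)+3\lambda_3=\omega_1+2\lambda_2+3\lambda_3+\theta$ — so the kernel sits in the weight space of weight (highest weight of $M_1$) $-\alpha_1$. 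Since $\alpha_1$ is simple, that weight space is at most one-dimensional in each degree, spanned in degree $r$ by $(y_{\alpha_1}\otimes t^r)v$, where $v$ is the generator of $M_1(\cdots)$. The relevant degree is $s_1=(\lambda_2+\lambda_3)(h_1)$ in (i) and $s_1=(\lambda_2+\lambda_3)(h_1)+1$ in (ii), so the whole proof is routed through the vector $(y_{\alpha_1}\otimes t^{s_1})v$, and one checks directly from \eqref{M.rel.1} that $(y_{\alpha_1}\otimes t^{s_1})v$ is killed by the defining map onto $M_0(\cdots)$.

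\emph{Step 1: the surjection and the kernel.} To construct $\phi^+$ (resp.\ $\psi^+$) I would compare the $\Phi^+$-tuples of $\mu$-compatible partitions attached to the two CV-modules; in (i) the hypothesis $\lambda_2(h_1)\ge1$ makes the target's data well-defined, and a direct check shows it is componentwise at least as dominant as the data of $M_1(0,\lambda_2,\lambda_3)$, which via the CV-presentation yields the surjection. Equivalently, one verifies that $v$ satisfies relations \eqref{M.rel.1}--\eqref{M.rel.4} of the target, the only genuinely new one being $(y_{\alpha_1}\otimes t^{s_1})v=0$. I would then argue that $\ker\phi^+=\bu(\mathfrak g[t])(y_{\alpha_1}\otimes t^{s_1})v$, i.e.\ that $M_1(\cdots)/\bu(\mathfrak g[t])(y_{\alpha_1}\otimes t^{s_1})v\cong M_0(\cdots)$: one inclusion is the surjection just built (which kills $(y_{\alpha_1}\otimes t^{s_1})v$), and for the other one checks that, modulo $(y_{\alpha_1}\otimes t^{s_1})v$, all the remaining lowered bounds in the defining relations of the target $M_0(\cdots)$ — e.g.\ the degree drops in \eqref{M.rel.1} and \eqref{M.rel.3} at $\alpha_1$ and $\theta$ — become formal consequences, obtained by applying $h_{\alpha_1}\otimes t^r$ and using the relations of $M_1(\cdots)$ together with standard $\mathfrak{sl}_3$ commutators (cf.\ \eqref{[y_i_j]}). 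Case (ii), with $\lambda_1=\omega_1$, is handled identically.

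\emph{Step 2: the kernel as a shifted $M_0$-module.} This is the substantive step. First, $(y_{\alpha_1}\otimes t^{s_1})v$ is a highest weight vector of $M_1(\cdots)$: using $[x_{\alpha_1},y_{\alpha_1}]=h_{\alpha_1}$, $[x_\theta,y_{\alpha_1}]=\pm x_{\alpha_2}$, $[x_{\alpha_2},y_{\alpha_1}]=0$ together with $\mathfrak n^+[t]v=0$ and the relation $(h\otimes t^r)v=0$ for $r\ge1$ — which applies because $s_1\ge1$ — one gets $\mathfrak n^+[t](y_{\alpha_1}\otimes t^{s_1})v=0$. Its weight is the highest weight of $M_1(\cdots)$ minus $\alpha_1$, which equals the highest weight of $M_0(\omega_1,\lambda_2-\omega_1+\omega_2,\lambda_3)$ in (i) (resp.\ of $M_0(0,\lambda_2+\omega_2,\lambda_3)$ in (ii)), and it lies in degree $s_1$, so $\bu(\mathfrak g[t])(y_{\alpha_1}\otimes t^{s_1})v$ is a graded quotient of $\tau_{s_1}W_{loc}$ of that highest weight. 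It then remains to check that $(y_{\alpha_1}\otimes t^{s_1})v$ satisfies the CV-defining relations \eqref{M.rel.1}--\eqref{M.rel.4} of that $M_0$-module, which I would carry out by expanding the operators $y_\alpha(r,s)$ via \lemref{Garland equation}, reducing along the $\mathfrak{sl}_2[t]$-subalgebras attached to $\alpha_1$, $\alpha_2$ and $\theta$ in the spirit of \lemref{sl2.CV.op}, and repeatedly feeding in the relations of $M_1(\cdots)$. I expect this verification to be the main obstacle: it is delicate exponent bookkeeping, and it is exactly here that the hypothesis $\lambda_1(h_\theta)\le1$ (and, in (i), $\lambda_2(h_1)\ge1$) is needed to keep all the operators in the permitted range. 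The outcome is the surjection $\phi^-$ (resp.\ $\psi^-$) from $\tau_{s_1}M_0(\cdots)$ onto $\ker\phi^+$ (resp.\ $\ker\psi^+$).

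\emph{Step 3: the dimension count.} Steps 1 and 2 give $\dim M_1(\cdots)=\dim\ker\phi^++\dim M_0(\text{quotient})\le\dim M_0(\text{kernel})+\dim M_0(\text{quotient})$. By \thmref{Main.thm.8}(i) each $M_0$ on the right is a fusion product of level $1$, $2$ and $3$ Demazure modules; using $D(\ell,\ell\mu)\cong D(\ell,\ell\omega_1)^{\ast\mu(h_{\alpha_1})}\ast D(\ell,\ell\omega_2)^{\ast\mu(h_{\alpha_2})}$ (\cite[Proposition 5.1]{MR3296163}) and the fact that $D(1,\omega_i)$, $D(2,2\omega_i)$, $D(3,3\omega_i)$ are concentrated in degree $0$ and isomorphic as $\mathfrak{sl}_3$-modules to $V(\omega_i)$, $V(2\omega_i)$, $V(3\omega_i)$ (dimensions $3$, $6$, $10$), the right-hand side equals $8\,\dim D(3,3\lambda_3)\dim D(2,2\lambda_2)$ in (i) and $24\,\dim D(3,3\lambda_3)\dim D(2,2\lambda_2)$ in (ii) — in both cases exactly $\dim\bigl(D(3,3\lambda_3)\ast D(2,2\lambda_2)\ast D(1,\lambda_1)\ast V(\theta)\bigr)$, recalling $\dim V(\theta)=8$. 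Since \lemref{dim V geq} supplies the reverse inequality $\dim M_1(\cdots)\ge\dim\bigl(D(3,3\lambda_3)\ast D(2,2\lambda_2)\ast D(1,\lambda_1)\ast V(\theta)\bigr)$, all inequalities are forced to be equalities; hence $\phi^-$ (resp.\ $\psi^-$) is injective, and the sequences \eqref{sh.1.0} and \eqref{sh.2.0} are exact.
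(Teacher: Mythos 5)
Your proposal follows essentially the same three-step structure as the paper's proof: produce the surjection from the CV presentations, show the kernel is cyclic and realize it as a quotient of the shifted $M_0$-module, and then close with the dimension count via \lemref{dim V geq} and \thmref{Main.thm.8}(i). The one imprecision worth flagging: in Step 1 you assert the kernel "sits in the weight space of weight (highest weight)$-\alpha_1$," but as a submodule it does not — the paper's direct comparison of presentations yields two a priori generators, $(y_1\otimes t^{s_1})v$ and $(y_\theta\otimes t^{(\lambda_2+\lambda_3)(h_\theta)})^2v$, and the genuine work is the explicit divided-power computation (using $\tfrac{(y_1\otimes t^{\cdot})^2}{2}\tfrac{(y_2\otimes t^{\cdot})^2}{2}v=0$ and the $\theta=\alpha_1+\alpha_2$ commutator) showing the second lies in the cyclic submodule of the first; you do acknowledge some such check is needed, but the route you sketch (applying $h_{\alpha_1}\otimes t^r$) is not the one the paper takes, and the weight-space heuristic alone would not establish cyclicity.
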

\begin{proof}
    (i) From the defining relations of  $M_{1}(0,\lambda_2,\lambda_3)$ and $M_0 (\omega_2, \lambda_2 -\omega_1,\lambda_3 + \omega_1)$, it is clear that $\phi^+$ is a surjective $\mathfrak{sl}_3[t]$-module homomorphism 
with  $$\ker \phi^+= \left\langle (y_1 \otimes t^{(\lambda_2+\lambda_3)(h_1)})v_{\Blambda,1}, (y_{\theta} \otimes t^{(\lambda_2+\lambda_3)(h_{\theta})})^2 v_{\Blambda,1} \right\rangle .$$
By the defining relations of $M_1(0,\lambda_2,\lambda_3)$ we have, 
\begin{equation}\label{M.j.0.2.3}
y_i\otimes t^{(\lambda_2+\lambda_3)(h_i)+1}.v_{\Blambda,1}=0, \qquad y_i(2,2(\lambda_2+\lambda_3))(h_i))v_{\Blambda,1} =0 , \qquad \text{ for } i\in I. \end{equation}
 Therefore, $\dfrac{(y_{1} \otimes t^{(\lambda_2+\lambda_3)(h_{1})})^2}{2}\dfrac{(y_{2} \otimes t^{(\lambda_2+\lambda_3)(h_{2})})^2}{2} v_{\Blambda,1}=0$ and consequently, 
$$\dfrac{(y_{\theta} \otimes t^{(\lambda_2+\lambda_3)(h_{\theta})})^2}{2}.v_{\Blambda,1} = 
(y_{2} \otimes t^{(\lambda_2+\lambda_3)(h_{2})})(y_{\theta} \otimes t^{(\lambda_2+\lambda_3)(h_{\theta})})(y_{1} \otimes t^{(\lambda_2+\lambda_3)(h_{1})})v_{\Blambda,1},$$ implying,
$\ker \phi^+ = \bu(\mathfrak{sl}_3[t])(y_{1} \otimes t^{(\lambda_2+\lambda_3)(h_{1})})v_{\Blambda,1}.$ It is clear that 
$(y_{1} \otimes t^{(\lambda_2+\lambda_3)(h_{1})})v_{\Blambda,1}$ 
is a highest weight vector of weight $3\lambda_3 +2\lambda_2 -\omega_1 + 2\omega_2$ which is annihilated by $h\otimes t^r$ for all $h\in \mathfrak h$ and $r\in \mathbb N$. Further, from the relations \eqref{M.j.0.2.3} and \eqref{M.rel.2}
it follows that for $\lambda_1=0$, $j=1$,       
\begin{align*}
(y_{1} \otimes t^{(\lambda_2+\lambda_3)(h_{1})})(y_{1} \otimes t^{(\lambda_2+\lambda_3)(h_{1})})v_{\Blambda,1}& = 0,\\
y_{1}(2,2((\lambda_2+\lambda_3)(h_{1})-1))(y_{1} \otimes t^{(\lambda_2+\lambda_3)(h_{1})})v_{\Blambda,1}& =0,\\
    (y_{2}\otimes t^{(\lambda_2+\lambda_3)(h_{2})+1})(y_{1} \otimes t^{(\lambda_2+\lambda_3)(h_{1})})v_{\Blambda,1} & = 0. 
    \end{align*}
Therefore, by \lemref{naoi.cv} we have a surjective map from $\tau_{s_1}M_0(\omega_1, \lambda_2-\omega_1+\omega_2, \lambda_1)$ to $\ker \phi^+$. Consequently
\begin{align*}
    \dim M_1 (0, \lambda_2,\lambda_3) & = \dim Ker(\phi^+) + \dim M_0(\omega_2, \lambda_2 -\omega_1,\lambda_3+\omega_1) \\
    &\leq \dim M_0(\omega_1,\lambda_2 -\omega_1 +\omega_2,\lambda_3) + \dim M_0(\omega_2, \lambda_2 - \omega_1, \lambda_3+\omega_1)\\
    & = \dim (D(3,3\lambda_3)*D(2,2(\lambda_2 - \omega_1 + \omega_2) )* D(1,\omega_1)) \\
    & + \dim (D(3,3(\lambda_3+\omega_1) * D(2,2(\lambda_2 - \omega_1)) * D(1,\omega_2)) \quad (\text{By \thmref{Main.thm.8}(i)})\\
    &\leq 10^{\lambda_3(h_{\theta})}6^{\lambda_2(h_{\theta})}3 + 10^{\lambda_3(h_{\theta})+1}6^{\lambda_2(h_{\theta})-1}3 = 10^{\lambda_1(h_{\theta})}6^{\lambda_2(h_{\theta})}8\\
&=\dim (D(3,3\lambda_3)*D(2,2\lambda_2)*V(\theta)
\end{align*}
On the other hand, by \lemref{dim V geq}, $\dim M_1(0,\lambda_2,\lambda_3) \geq \dim (D(3,3\lambda_3)*D(2,2\lambda_2)*V(\theta))$. From the two inequalities it follows that $\dim M_1(0,\lambda_2,\lambda_3) \geq \dim (D(3,3\lambda_3)*D(2,2\lambda_2)*V(\theta))$ which implies $\dim \ker \phi^+ = \dim M_0(\omega_1, \lambda_2-\omega_1+\omega_2, \lambda_1)$. This proves part(i) of the theorem.

\noindent (ii). From the defining relations of $M_1 (\omega_1, \lambda_2,\lambda_3)$ and $M_0 (\omega_2, \lambda_2 +\omega_1,\lambda_3)$, it is clear that $\psi^+$ is surjective with $$\ker \psi^+= < y_1 \otimes t^{(\lambda_3+\lambda_2)(h_1)+1} v_{\Blambda,1} >.$$ 
By the defining relations of $M_1(\omega_1,\lambda_2,\lambda_3)$ we have, 
\begin{equation*}\label{M.j.1.2.3}
y_i\otimes t^{(\lambda_2+\lambda_3+\omega_1)(h_i)+1}.v_{\Blambda,1}=0, \qquad y_i(2,2(\lambda_2+\lambda_3))(h_i)+\omega_1(h_i))v_{\Blambda,1} =0 , \qquad \text{ for } i\in I. \end{equation*}
It easily follows from the above relations that $y_1 \otimes t^{(\lambda_3+\lambda_2)(h_1)+1} v_{\Blambda,1}$ is a highest weight vector of weight $ 3\lambda_3+2(\lambda_2+\omega_2)$ which is annihilated by $h\otimes t^r$ for all $h\in \mathfrak h$ and $r\in \mathbb N$. Now similar arguments as used in part(i) of the theorem, show that, 
\begin{equation}\label{dim V(mu)}
    \dim M_1(\omega_1,\lambda_2,\lambda_3) \leq \dim (D(3,3\lambda_3)*D(2,2\lambda_2)*D(1,\omega_1)*V(\theta).
\end{equation} As is part(i), using \lemref{dim V geq} and \eqref{dim V(mu)}, we see that the modules $\ker \psi^+$ and $M_0(0,\lambda_2+\omega_2,\lambda_3)$ are isomorphic and this completes the proof of the theorem. \end{proof}

\vspace{.15cm}

\noindent Using the order 2, Dynkin diagram automorphism $w_0$ of $\mathfrak{sl}_3$ that maps $\alpha_1$ to $\alpha_2$, the following can be deduced from \thmref{dim_V_geq.1}.

\begin{corollary}\label{dim_V_geq.1} Let $\lambda_1 \lambda_2, \lambda_3 \in P^+$ with $\lambda_1(h_\theta)\leq 1$. \\
(i). If $\lambda_2(h_1) =0$ and $\lambda_2(h_2)\geq 1$, then we have the short exact sequence : $$0 \rightarrow \tau_{s_1^{'}}M_0 (\omega_2,\lambda_2 -\omega_2 + \omega_1,\lambda_3) \rightarrow M_1 (0,\lambda_2,\lambda_3) \rightarrow M_0 (\omega_1,\lambda_2 -\omega_2,\lambda_3 +\omega_2) \rightarrow 0$$ where $s_1^{'} = \lambda_3(h_2)+\lambda_2(h_2)$.\\
(ii). If $\lambda_1 =\omega_2$, then we have the short exact sequence :
 $$0 \rightarrow \tau_{s_3}M_0 (0, \lambda_2 +\omega_2,\lambda_3) \rightarrow 
M_1 (\omega_1, \lambda_2,\lambda_3) \rightarrow M_0 (\omega_2,\lambda_2 +\omega_1,\lambda_3) \rightarrow 0,\, $$
    where $s_{2} = \lambda_2 (h_2)+\lambda_3 (h_2)+1$ and $s_{3} = \lambda_2 (h_1)+\lambda_3 (h_1)+1$.
\end{corollary}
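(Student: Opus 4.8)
The plan is to deduce both short exact sequences directly from those of \thmref{dim_V_geq.1} by twisting with the nontrivial diagram automorphism of $\mathfrak{sl}_3$, as indicated in the statement; once one checks that this twist is compatible with all the constructions entering that theorem, nothing further is needed.

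First I would fix the Lie algebra automorphism $\sigma\colon\mathfrak{sl}_3\to\mathfrak{sl}_3$ lifting the order-two diagram automorphism (the map denoted $w_0$ in the statement), normalised so that $\sigma(x_{\alpha_1})=x_{\alpha_2}$, $\sigma(y_{\alpha_1})=y_{\alpha_2}$, $\sigma(h_{\alpha_1})=h_{\alpha_2}$, while $\sigma$ preserves $\mathfrak g_{\pm\theta}$ and fixes $h_\theta$; on $\mathfrak h^*$ it acts as $-w_0$, interchanging $\omega_1$ and $\omega_2$. Extending by $\sigma(x\otimes t^r)=\sigma(x)\otimes t^r$ makes $\sigma$ a graded automorphism of $\mathfrak{sl}_3[t]$, and twisting the action by $\sigma$ is an exact, grading-preserving self-equivalence of the category of graded $\mathfrak{sl}_3[t]$-modules; in particular it commutes with the shift functors $\tau_s$ and sends short exact sequences to short exact sequences. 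From the generators-and-relations presentations one reads off $W_{loc}(\lambda)^\sigma\cong W_{loc}(\sigma\lambda)$ and $D(\ell,\lambda)^\sigma\cong D(\ell,\sigma\lambda)$, and since $\sigma$ fixes $\theta$ one has $V(\theta)^\sigma\cong V(\theta)$. Applying $\sigma$ to the $\Phi^+$-tuple $\xi_{\Blambda,j}=(\xi^1,\xi^\theta,\xi^2)$ of \secref{Mjmod} interchanges $\xi^1$ and $\xi^2$, fixes $\xi^\theta$, and permutes the defining relations \eqref{M.rel.1}--\eqref{M.rel.4} (using $\mu(h_{\alpha_1})=(\sigma\mu)(h_{\alpha_2})$ and $\mu(h_\theta)=(\sigma\mu)(h_\theta)$); hence $M_j(\lambda_1,\lambda_2,\lambda_3)^\sigma\cong M_j(\sigma\lambda_1,\sigma\lambda_2,\sigma\lambda_3)$. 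The same observation shows the twist is compatible with the fusion-product identifications of \lemref{dim V geq}, \lemref{naoi.cv} and \thmref{Main.thm.8}.

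With this dictionary in place, the corollary follows by applying $(-)^\sigma$ to the two short exact sequences of \thmref{dim_V_geq.1} and then renaming $\sigma\lambda_2,\sigma\lambda_3$ back to $\lambda_2,\lambda_3$ (legitimate since $\sigma$ is a bijection of $P^+$). In effect every $\omega_1$ is replaced by $\omega_2$ and vice versa in the weight labels, every $h_{\alpha_1}$ by $h_{\alpha_2}$ and vice versa in the grade shifts, and the hypotheses transform accordingly: the condition $\lambda_2(h_{\alpha_1})\geq1$ of \thmref{dim_V_geq.1}(i) becomes $\lambda_2(h_{\alpha_2})\geq1$, the shift $s_1=(\lambda_2+\lambda_3)(h_{\alpha_1})$ becomes $s_1'=(\lambda_2+\lambda_3)(h_{\alpha_2})=\lambda_3(h_{\alpha_2})+\lambda_2(h_{\alpha_2})$, and one obtains part (i); part (ii) is read off from \thmref{dim_V_geq.1}(ii) in exactly the same way.

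The one step that is more than bookkeeping is verifying that $\sigma$ genuinely lifts to an automorphism of $\mathfrak{sl}_3$ stabilising $\mathfrak g_{\pm\theta}$ with a normalisation under which the full set of defining relations of each CV module $M_j$ is permuted, not merely mapped to an isomorphic family, and that twisting by $\sigma$ commutes with forming fusion products at a fixed tuple of evaluation parameters. Both facts are standard, but I would record them explicitly before performing the relabelling, since they are precisely what makes the passage from $\omega_1$ to $\omega_2$ rigorous; everything after that is a substitution.
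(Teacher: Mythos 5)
Your proposal takes exactly the approach the paper indicates: it is stated just before the corollary that the result ``can be deduced from \thmref{dim_V_geq.1}'' via the order-two diagram automorphism of $\mathfrak{sl}_3$, which is precisely the twist-by-$\sigma$ functor you set up, and your careful verification that $\sigma$ permutes the defining relations of the CV-modules $M_j$, commutes with $\tau_s$, and preserves fusion products is the content being invoked. One remark: carrying out the substitution literally, as you do, yields $M_1(\omega_2,\lambda_2,\lambda_3)$ in the middle and $\tau_{(\lambda_2+\lambda_3)(h_2)+1}M_0(0,\lambda_2+\omega_1,\lambda_3)$ on the left of part (ii), so the printed corollary (which repeats $M_1(\omega_1,\ldots)$ and $M_0(0,\lambda_2+\omega_2,\lambda_3)$ from the untwisted theorem) appears to contain typographical slips; your derivation gives the intended statement.
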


\subsection{The modules $M_j(\lambda_1,\lambda_2,\lambda_3)$ when $|\lambda_1|\leq 1$ and $j\geq 2$} 

\begin{theorem}\label{jgeq2} Given $\lambda_1 ,\lambda_2, \lambda_3 \in P^+$ with $\lambda_1(h_\theta)\leq 1$, For $j\geq 2$, $M_j(\lambda_1,\lambda_2,\lambda_3)$ admits the short exact sequence :
 \begin{equation} 0 \longrightarrow \ker \phi_j^+ \longrightarrow M_j(\lambda_1, \lambda_2, \lambda_3) \overset{\phi_j^+}{\longrightarrow} M_{j-2}(\lambda_1,\lambda_2 +\theta, \lambda_3) \longrightarrow 0\end{equation} 
where $\ker\phi_j^+$ has a filtration $0 \subset U_{1}^j \subset U_{2}^j = \ker \phi_j^+$ such that, \\
(i). For $\lambda_1=0$,
$${U_1^j } \cong \tau_{s_{1,j}}M_{j-2}(\omega_2, \lambda_2+\omega_2,\lambda_3), \quad \frac{U_2^j}{U_1^j} \cong \tau_{s_{2,j}}M_{j-2}(0, \lambda_2, \lambda_3+\omega_1), $$ with $s_{1,j} = (\lambda_3+\lambda_2)(h_1)+j-1,  s_{2,j} = (\lambda_3+\lambda_2)(h_2) +j-1$.\\ 
(ii). For $\lambda_1=\omega_1$, 
$$U_1^j \cong \tau_{s_{3,j}}M_{j-1}(0, \lambda_2+\omega_2, \lambda_3), \quad \frac{U_2^j}{U_1^j} \cong \tau_{s_{4,j}}M_{j-2}(0, \lambda_2 +2\omega_1, \lambda_3) $$ with $s_{3,j} = (\lambda_2+\lambda_3)(h_1) +j$ and $ s_{4,j} = (\lambda_2+\lambda_3)(h_2) +j-1$.\\
Consequently, for $j\geq 2$, $ M_{j}(\lambda_1,\lambda_2,\lambda_3) \cong_{\mathfrak{g}[t]} D(3,3\lambda_3)*D(2,2\lambda_2)*D(1,\lambda_1)*V(\theta)^{*j}$ whenever $\lambda_1 =0$ or $\lambda_1=\omega_1$.\label{Case II}
\end{theorem}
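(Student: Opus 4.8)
The plan is to argue by induction on $j$, reducing via the short exact sequence to modules $M_{j'}(\cdot,\cdot,\cdot)$ with $j'\le j-1$, for which the fusion product description is already available: the cases $j'=0$ and $j'=1$ are \thmref{Main.thm.8}(i), \thmref{dim_V_geq.1} and \corref{dim_V_geq.1}. Note that the submodule layer $U_1^j$ in the case $\lambda_1=\omega_1$ is itself an $M_{j-1}$-module, so the induction must run on $j$ rather than on parity.

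First I would construct $\phi_j^+$ and describe its kernel. Set $a_i:=(\lambda_1+\lambda_2+\lambda_3)(h_i)+j-1$ for $i\in I$. Using $\theta(h_i)=1$ and $\theta(h_\theta)=2$, a direct comparison of \eqref{M.rel.1}--\eqref{M.rel.4} shows that $M_{j-2}(\lambda_1,\lambda_2+\theta,\lambda_3)$ has the same generating weight $\uplambda+j\theta$ as $M_j(\lambda_1,\lambda_2,\lambda_3)$, that its relations of type \eqref{M.rel.2}, \eqref{M.rel.3}, \eqref{M.rel.4} literally coincide with those of $M_j(\lambda_1,\lambda_2,\lambda_3)$, and that its relation \eqref{M.rel.1}, which reads $(y_i\otimes t^{a_i})v=0$, is strictly stronger than the corresponding relation of $M_j(\lambda_1,\lambda_2,\lambda_3)$. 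Hence $\phi_j^+$ exists, and $\ker\phi_j^+$ is the submodule generated by $(y_1\otimes t^{a_1})v_{\Blambda,j}$ and $(y_2\otimes t^{a_2})v_{\Blambda,j}$; using the bracket relations \eqref{[y_i_j]}, both of these are highest weight vectors killed by $h\otimes t^r$ for $r>0$. Define $U_1^j:=\bu(\mathfrak{sl}_3[t])(y_1\otimes t^{a_1})v_{\Blambda,j}$ and $U_2^j:=\ker\phi_j^+$.

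Next I would identify the layers $U_1^j$ and $U_2^j/U_1^j$. Feeding the relations \eqref{M.rel.1}--\eqref{M.rel.4} together with Garland's identity (\lemref{Garland equation}) into the vector $(y_1\otimes t^{a_1})v_{\Blambda,j}$, one checks that it satisfies the full list of CV-relations of $M_{j-2}(\omega_2,\lambda_2+\omega_2,\lambda_3)$ when $\lambda_1=0$ (resp. of $M_{j-1}(0,\lambda_2+\omega_2,\lambda_3)$ when $\lambda_1=\omega_1$), so $U_1^j$ is a graded quotient of that module after the grade shift $\tau_{s_{1,j}}$ (resp. $\tau_{s_{3,j}}$), the shift being exactly the grade $a_1$. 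Working modulo $U_1^j$, the image of $(y_2\otimes t^{a_2})v_{\Blambda,j}$ is shown in the same manner to satisfy the relations of $M_{j-2}(0,\lambda_2,\lambda_3+\omega_1)$ when $\lambda_1=0$ (resp. of $M_{j-2}(0,\lambda_2+2\omega_1,\lambda_3)$ when $\lambda_1=\omega_1$), with grade shift $\tau_{s_{2,j}}$ (resp. $\tau_{s_{4,j}}$) equal to $a_2$; hence $U_2^j/U_1^j$ is a graded quotient of that module. Throughout, weights are tracked via $\alpha_1=2\omega_1-\omega_2$, $\alpha_2=-\omega_1+2\omega_2$, $\theta=\omega_1+\omega_2$, and one verifies that the highest weights match exactly.

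The argument then closes by a dimension count. By the induction hypothesis, $M_{j-2}(\lambda_1,\lambda_2+\theta,\lambda_3)$ and the target modules of $U_1^j$ and $U_2^j/U_1^j$ are fusion products of the advertised shape, of dimension $10^{\lambda_3(h_\theta)}6^{\lambda_2(h_\theta)}3^{\lambda_1(h_\theta)}$ times a power of $8$ (using $\dim D(3,3\omega_i)=10$, $\dim D(2,2\omega_i)=6$, $\dim D(1,\omega_i)=3$, $\dim V(\theta)=8$, and multiplicativity over fundamental weights as recorded in the proof of \thmref{dim_V_geq.1}). Adding the three dimensions — the elementary identities being $36+18+10=64$ when $\lambda_1=0$ and $108+48+36=64\cdot 3$ when $\lambda_1=\omega_1$ — yields precisely $\dim D(3,3\lambda_3)\dim D(2,2\lambda_2)\dim D(1,\lambda_1)\cdot 8^j$, which by \lemref{dim V geq} is a lower bound for $\dim M_j(\lambda_1,\lambda_2,\lambda_3)$. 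Consequently every surjection above is an isomorphism: the filtration $0\subset U_1^j\subset U_2^j=\ker\phi_j^+$ has the claimed quotients, and the surjection of \lemref{dim V geq} becomes an isomorphism $M_j(\lambda_1,\lambda_2,\lambda_3)\cong_{\mathfrak{sl}_3[t]}D(3,3\lambda_3)\ast D(2,2\lambda_2)\ast D(1,\lambda_1)\ast V(\theta)^{\ast j}$. I expect the middle step to be the main obstacle: one must check that \emph{both} generators of $\ker\phi_j^+$ satisfy the \emph{entire} list of CV-relations of their target modules — in particular, relation \eqref{M.rel.1} for the second generator holds only after passing to the quotient by $U_1^j$ — and this requires the same patient Garland-identity and grade bookkeeping already carried out in the $j=1$ analysis of \thmref{dim_V_geq.1}.
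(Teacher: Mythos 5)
Your proposal follows the paper's own proof essentially step for step: the same surjection $\phi_j^+$ obtained by comparing the CV relations and noting that \eqref{M.rel.2}--\eqref{M.rel.4} coincide while \eqref{M.rel.1} strengthens, the same two-step filtration $0\subset U_1^j\subset U_2^j=\ker\phi_j^+$ generated by $(y_1\otimes t^{a_1})v_{\Blambda,j}$ and $(y_2\otimes t^{a_2})v_{\Blambda,j}$, the same Garland-identity relation checks to exhibit each layer as a quotient of the advertised $M_{j'}$-module, and the same closing dimension count ($18+10+36=64$, resp.\ $48+36+108=192$) against the lower bound of \lemref{dim V geq}. You also correctly flag that the induction must run on $j$ (not parity) because $U_1^j$ is an $M_{j-1}$-module when $\lambda_1=\omega_1$, and you correctly identify the CV-relation verification as the computationally heavy step that the paper carries out explicitly.
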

\begin{proof} We prove the theorem by induction on $j$.

\noindent For $\lambda_1=0$,  from the defining relations of $M_j(0,\lambda_2,\lambda_3)$ and $M_{j-2}(0,\lambda_2+\theta,\lambda_3)$, it is easy to see that there exists a $\mathfrak{sl}_3[t]$-module epimorphism 
$M_j(0,\lambda_2,\lambda_3) \xrightarrow{\phi_j^+} M_{j-2}(0,\lambda_2+\theta,\lambda_3)$ such that 
    $$\ker \phi_j^+ =\left\langle (y_1 \otimes t^{(\Blambda_2+\lambda_3)(h_1) + j -1})v_{\lambda,j}, (y_2 \otimes t^{(\lambda_2+\lambda_3)(h_2) + j-1})v_{\Blambda,j}\right\rangle. $$
Setting, $$ U_1^j = 
U(\mathfrak{g}[t])(y_1 \otimes t^{(\lambda_2+\lambda_3)(h_1) + j -1})v_{\Blambda,j},
\qquad
U_2^j = U(\mathfrak{g}[t])(y_2 \otimes t^{(\lambda_2+\lambda_3)(h_2) + j-1})v_{\Blambda,j} 
+ U_1^j,$$ we see that $0 \subset U_1^j \subset U_2^j = \ker \phi^+_j$ 
is a filtration of $\ker \phi_j^+$.\\
\noindent By the defining relations of $M_j(0,\lambda_2,\lambda_3)$ for all integers $p\geq 0$ we have,
\begin{equation} \begin{array}{ll} 
y_i(1, (\lambda_2+\lambda_3)(h_i)+j+p)v_{\Blambda,j}=0,  & 
y_\theta(1, (\lambda_2+\lambda_3)(h_\theta)+j+p)v_{\Blambda,j}=0, 
\end{array}\label{M.j.0.2.3.rel}
\end{equation}
Hence, for each $j\geq 2$, $U_1^j$ and $U_2^j/U_1^j$ are highest weight modules generated by highest weight vectors of weights  $3(\lambda_3+\omega_2)+2(\lambda_2-\theta)+j\theta$ and $2\lambda_2+3(\lambda_3+\omega_1)$ respectively and the generators of $U_1^j$ and $U_2^j/U_1^j$ are annihilated by $h\otimes t^r$ for all $h\in \mathfrak h$ and $r\in \mathbb N$. Therefore, for each $j\geq 2$, $U_1^j$ and $U_2^j/U_1^j$ are quotients of the local Weyl modules   $W_{loc}(3(\lambda_3+\omega_2)+2(\lambda_2-\theta)+j\theta)$ and $W_{loc}(2\lambda_2+3(\lambda_3+\omega_1))$ respectively.
 
Using the relations, 
\begin{equation}\begin{array}{l} \label{M.j.0.2.3.rel.b}
y_i(3, (2\lambda_2+3\lambda_3)(h_i)+j-2+p)v_{\Blambda,j}=0, \qquad 
\forall \, i\in I, \\ 
y_i(2, 2(\lambda_2+\lambda_3)(h_i)+j-1+p)v_{\Blambda,j}=0,\qquad \forall \, 
i\in I, \end{array}\end{equation}   
along with \eqref{M.j.0.2.3.rel}, we see that for $j\geq 2$, we have
\begin{equation}\label{U_1.j.1}\begin{array}{l}
y_1(1, (\lambda_2+\lambda_3)(h_1)+j-2)(y_1 \otimes t^{(\lambda_2+\lambda_3)(h_1)+j-1}).v_{\Blambda,j} =0,\\
y_2(1, (\lambda_2+\lambda_3)(h_2)+j)(y_1 \otimes t^{(\lambda_2+\lambda_3)(h_1)+j-1}).v_{\Blambda,j} =0; \\
y_\theta(1, (\lambda_2+\lambda_3)(h_\theta)+j)(y_1 \otimes t^{(\lambda_2+\lambda_3)(h_1)+j-1}).v_{\Blambda,j} =0.\\
y_1(2,2(\lambda_2+\lambda_3)(h_1)+j-3) (y_1 \otimes t^{(\lambda_2+\lambda_3)(h_1)+j-1}).v_{\Blambda,j} =0;\\
y_2(2,2(\lambda_2+\lambda_3)(h_2)+j) (y_1 \otimes t^{(\lambda_2+\lambda_3)(h_1)+j-1}).v_{\Blambda,j} =0.
\end{array}\end{equation} and 
\begin{equation}\label{U_2.j.1}
  \begin{array}{l}y_2(1,(\lambda_2+\lambda_3)(h_2)+j-2)(y_2 \otimes t^{(\lambda_2+\lambda_3)(h_2)+ j-1})v_{\Blambda,j}  = 0,\\
      y_{2}(2, 2(\lambda_2+\lambda_3)(h_2)+j-3)(y_2 \otimes t^{(\lambda_2+\lambda_3)(h_2)+ j-1})v_{\Blambda,j}  = 0,\end{array}\end{equation}
Since $(y_\theta\otimes t^{(\lambda_2+\lambda_3)(h_\theta)+j})v_{\Blambda,j}=0$, for $j\geq 2$, we have 
   \begin{equation}\label{U_2.j.2} (y_{1} \otimes t^{(\lambda_2+\lambda_3)(h_1)+j-1})(y_2 \otimes t^{(\lambda_2+\lambda_3)(h_2)+j-1})v_{\Blambda,j} \in U_1^j.\end{equation}
Further, since \eqref{xrs relation.2} and 
$y_{i}(2,2(\lambda_2+\lambda_3)(h_i)+j-1)v_{\Blambda,j} = 0$ for $i\in I$,  imply 
$${}_{(\lambda_2+\lambda_3)(h_i)+1}y_{i}(2,2(\lambda_2+\lambda_3)(h_i)+j-1)v_{\Blambda,j} = (y_{i} \otimes t^{(\lambda_2+\lambda_3)(h_i)})(y_{i} \otimes t^{(\lambda_2+\lambda_3)(h_i)+j-1})v_{\Blambda,j},$$
we have $y_{\sigma(i)}\otimes t^{(\lambda_2+\lambda_3)(h_{\sigma(i)})+j-1)}y_{i}(2,2(\lambda_2+\lambda_3)(h_i)+j-1)v_{\Blambda,j} = 0$ for $i\in I$, where $\sigma=(12)$ is the non-trivial permutation in the symmetric group $\mathfrak{S}_2$. For $j\geq 2$, this gives,
$$\begin{array}{l}
y_{\sigma(i)}(2,2(\lambda_2+\lambda_3)(h_{\sigma(1)}))+j-1) (y_i\otimes t^{(\lambda_2+\lambda_3)(h_i)+j-1)})v_{\Blambda,j} \\=[ y_i\otimes t^{(\lambda_2+\lambda_3)(h_i)+j-1)} y_{\sigma(i)}\otimes t^{(\lambda_2+\lambda_3)(h_{\sigma(i)})}]y_{\sigma(i)}\otimes t^{(\lambda_2+\lambda_3)(h_{\sigma(i)})+j-1)}v_{\Blambda,j}, 
\end{array} $$
which implies 
\begin{align}\label{U_2.j.3}
&y_{1}(2,2(\lambda_2+\lambda_3)(h_1)+j-1) (y_2\otimes t^{(\lambda_2+\lambda_3)(h_2)+j-1)})v_{\Blambda,j} \in U_1^j,\\
&y_\theta\otimes t^{(\lambda_2+\lambda_3)(h_\theta)+j-1)} y_2\otimes t^{(\lambda_2+\lambda_3)(h_2)+j-1)}v_{\Blambda,j}\in U_1^j.\label{U_2.j.4}
\end{align}
The relations \eqref{U_2.j.1},  \eqref{U_2.j.2},  \eqref{U_2.j.3} and  \eqref{U_2.j.4} along with the fact that $U_2^j/U_1^j$ is a quotient of $W_{loc}(2\lambda_2+3(\lambda_3+\omega_1))$ together imply that there exists a  $\mathfrak{sl}_3[t]$-module surjection $ \tau_{s_{2,j}}M_{j-2}(0, \lambda_2, \lambda_1+\omega_1) \rightarrow U_2^j/U_1^j, $ where $s_{2,j} = (\lambda_2+\lambda_3)(h_2)+j-1).$\\

Now observe that, using relations \eqref{M.rel.3} and \eqref{M.rel.4} in $M_j(0,\lambda_2,\lambda_3)$, from \eqref{xrs relation.2} and $y_{1}(3,3(\lambda_2+\lambda_3)(h_1)+j-1)v_{\Blambda,j}=0$ it follows that,
$$ \dfrac{-1(y_{1}\otimes t^{(\lambda_2+\lambda_3)(h_1)})^{2}}{2}(y_{1}\otimes t^{(\lambda_2+\lambda_3)(h_1)+j-1})v_{\Blambda,j} + {_{(\lambda_2+\lambda_3)(h_1)+1}}y_{1}(3,3(\lambda_2+\lambda_3)(h_1) +j-1)v_{\Blambda,j}=0.$$ Applying $(y_{2} \otimes t^{(\lambda_2+\lambda_3)(h_2)+j-1})^{2}$ on either side of the above equation and 
using\\ $(y_2 \otimes t^{(\lambda_2+\lambda_3)(h_2)+j-1})^{2}v_{\Blambda,j} =0,$ and  
$(y_{\theta} \otimes t^{(\lambda_2+\lambda_3)(h_\theta)+j})v_{\Blambda,j} =0$, we see that
\begin{align*}
&(y_{2} \otimes t^{(\lambda_2+\lambda_3)(h_2)+j-1})^{2}  \big( \dfrac{(y_{1}\otimes t^{(\lambda_2+\lambda_3)(h_1)})^{2}}{2}(y_{1}\otimes t^{(\lambda_2+\lambda_3)(h_1)+j-1})v_{\Blambda,j} \big) \\ &= (y_{2} \otimes t^{(\lambda_2+\lambda_3)(h_2)+j-1})^{2}\big({_{(\lambda_2+\lambda_3)(h_1)+1}}y_{1}(3,3(\lambda_2+\lambda_3)(h_1) +j-1)v_{\Blambda,j}\big) \end{align*}
reduces to 
 \begin{equation}\begin{array}{l}
2(y_{1}\otimes t^{(\lambda_2+\lambda_3)(h_1)})(y_{\theta}\otimes t^{(\lambda_2+\lambda_3)(h_\theta)+j-1})(y_1 \otimes t^{(\lambda_2+\lambda_3)(h_1)+j-1})(y_2 \otimes t^{(\lambda_2+\lambda_3)(h_2)+j-1})v_{\Blambda,j}  \\+ 
 (y_{\theta}\otimes t^{(\lambda_2+\lambda_3)(h_\theta)+j-1})^{2}(y_{1}\otimes t^{(\lambda_2+\lambda_3)(h_1)+j-1})v_{\Blambda,j}  =0.\end{array} \label{U_1.j.3}
 \end{equation}
As $2+2(\lambda_2+\lambda_3)(h_1) +j-1\geq 1+2(\lambda_2+\lambda_3)(h_1)+j$, using relation \eqref{xrs relation.2} of CV modules,  we have ${_{(\lambda_2+\lambda_3)(h_1)}}y_{1}(2,2((\lambda_2+\lambda_3)(h_1)+j-1)v_{\Blambda,j} =0.$ Thus it follows that, 
$$-(y_{1}\otimes t^{(\lambda_2+\lambda_3)(h_1)+j-1})(y_{1}\otimes t^{(\lambda_2+\lambda_3)(h_1)})v_{\Blambda,j} = {}_{(\lambda_2+\lambda_3)(h_1)+1}y_{1}(2,2(\lambda_2+\lambda_3)(h_1) +j-1)v_{\Blambda,j}.$$ Applying  $(y_{2} \otimes t^{(\lambda_2+\lambda_3)(h_2)+j-1})^{2}$ on either side of the above equation and using the same arguments as above we get,
\begin{equation} (y_{1}\otimes t^{(\lambda_2+\lambda_3)(h_1)})(y_{\theta}\otimes t^{(\lambda_2+\lambda_3)(h_\theta)+j-1})(y_1 \otimes t^{(\lambda_2+\lambda_3)(h_1)+j-1})(y_2 \otimes t^{(\lambda_2+\lambda_3)(h_2)+j-1})v_{\Blambda,j} =0.\label{8.25}\end{equation}
From \eqref{8.25} and \eqref{U_1.j.3} it follows that, 
\begin{equation} \label{8.26}
y_\theta(2,2(\lambda_2+\lambda_3)(h_\theta)+2j-2)y_1\otimes t^{(\lambda_2+\lambda_3)(h_1)+j-1}.v_{\Blambda,j} =0.\end{equation} 
The fact that $U_1^j$ is a quotient of $W_{loc}(3(\lambda_3+\omega_2)+2(\lambda_2-\theta)+j\theta)$ along with the relations \eqref{U_1.j.1} and \eqref{8.26}, 
proves that  for $j\geq 2$, there exists a surjective $\mathfrak{sl}_3[t]$-module homomorphism $\tau_{s_{1,j} }M_{j-2}(\omega_2, \lambda_2+\omega_2,\lambda_3) \rightarrow U_1^j,$ where $s_{1,j} = (\lambda_2+\lambda_3)(h_1)+j-1.$\\

Similar arguments show that for $\lambda_1=\omega_1$, there exists a surjective $\mathfrak{sl}_3[t]$-module homomorphism $\phi_j^+ : M_j(\omega_1,\lambda_2,\lambda_3) \rightarrow M_{j-2}(\omega_1,\lambda_2+\theta,\lambda_3)$ such that $\ker \phi_j^+$ has a filtration by modules $U_1^j$ and $U_2^j$ such that 
\begin{align*} U_1^j= \bu(\mathfrak{sl}_3[t])(y_1\otimes t^{s_{3,j}})v_{\Blambda,j}\qquad  
& \qquad U_2^j = \bu(\mathfrak{sl}_3[t])(y_2\otimes t^{s_{4,j}})v_{\Blambda,j}+U_1^j\end{align*} with $s_{3,j} = (\lambda_2+\lambda_3)(h_1)+j$ and  $s_{4,j} = (\lambda_2+\lambda_3)(h_2) +j-1$, and\\ $\tau_{s_{3,j}}:M_{j-1}(0,\lambda_2+\omega_2,\lambda_3)\rightarrow U_1^j$ and $\tau_{s_{4,}} :M_{j-2}(0, \lambda_2 +2\omega_1,\lambda_3)
\rightarrow \dfrac{U_2^j}{U_1^j}$ are  surjective $\mathfrak{sl}_3[t]$-module homomorphisms.\\

As, 
$\dim M_j(\lambda_1,\lambda_2,\lambda_3)= \dim \ker \phi^+_j + \dim M_{j}^+(\lambda_1,\lambda_2, \lambda_3)$, we have, 
\begin{align} 
&\dim M_j(0,\lambda_2,\lambda_3) \leq \dim M_{j-2}(\omega_2,\lambda_2 +\omega_2,\lambda_3) + \dim M_{j-2}(0,\lambda_2,\lambda_3+\omega_1) + \dim M_{j-2}(0, \lambda_2+\theta,\lambda_3),\label{1a} \vspace{.15cm}\\
 &\dim M_j(\omega_1,\lambda_2,\lambda_3) \leq \dim M_{j-1}(0,\lambda_2 +\omega_2,\lambda_3) + \dim M_{j-2}(0,\lambda_2+2\omega_1,\lambda_3) + \dim M_{j-2}(\omega_1, \lambda_2+\theta,\lambda_3).\label{1b}
\end{align}
Substituting the values  of  $\dim M_{0}(\omega_2,\lambda_2 +\omega_2,\lambda_3) , \dim M_{0}(0,\lambda_2,\lambda_3+\omega_1), \dim M_{0}(0, \lambda_2+\theta,\lambda_3)$ in \eqref{1a} and of $\dim M_{1}(0,\lambda_2 +\omega_2,\lambda_3),  \dim M_{0}(0,\lambda_2+2\omega_1,\lambda_3), \dim M_{0}(0, \lambda_2,\lambda_3+\omega_1)$ in \eqref{1b} from \eqref{M_0.dim}, using \lemref{dim V geq}, we see that for
$\lambda_1=\omega_1$ or $0$,
$$\dim M_2(\lambda_1,\lambda_2,\lambda_3) = \dim D(3,3\lambda_3)*D(2,2\lambda_2)*D(1,\lambda_1)*V(\theta)^{*2}.$$ This implies that $\tau_{s_{i,2}}$ is an isomorphism for $i=1,2,3,4$ which proves that the theorem holds for $j=2$. Now applying induction hypothesis assume that the theorem holds for all $0\leq r\leq j$. Then using \eqref{1a}, we see that 
\begin{align*}\dim M_j(0,\lambda_2,\lambda_3)& \leq \dim (D(3, 3\lambda_3) * D(2,2(\lambda_2 + \omega_2)) * D(1,\omega_2)*V(\theta)^{*j-2})\\
&    + \dim( D(3,3(\lambda_3 + \omega_1))* D(2,2\lambda_2)*V(\theta)^{*j-2}) \\
  &   + \dim( D(3,3\lambda_3)*D(2,2(\lambda_2 + \omega_1 + \omega_2))* V(\theta)^{*j-2}) \\
    & = 10^{\lambda_3(h_{\theta})}6^{\lambda_2(h_{\theta}) + 1}8^{j-2}3 + 10^{\lambda_3(h_{\theta}) +1}6^{\lambda_2(h_{\theta})}8^{j-2} + 10^{\lambda_3(h_{\theta})} 6^{\lambda_2(h_{\theta}) +2}8^{j-2}\\
    & = 10^{\lambda_3(h_{\theta})}6^{\lambda_2(h_{\theta})}8^{j}
\end{align*}
which prove $\dim M_j(0, \lambda_2, \lambda_3) \leq \dim(D(3,3\lambda_3)*D(2,2\lambda_2)*V(\theta)^{*j})$. Likewise, using \eqref{1b}, we see that 
$\dim M_j(\omega_1, \lambda_2, \lambda_3) \leq \dim(D(3,3\lambda_3)*D(2,2\lambda_2)*D(1,\omega_1)*V(\theta)^{*j})$. 
Now, the same arguments as those used in \thmref{dim_V_geq.1} show that the above inequalities in the dimensions along with \lemref{dim V geq} imply that the result for all $j\geq 2$. \end{proof}


\subsection{The Module $M_j(\lambda_1,\lambda_2,0)$ for $\lambda_3 =0$} 
If $\lambda_1 =0$, then $M_j(0,\lambda_2,0)\cong_{\mathfrak{sl}_3[t]} D(2,2\lambda_2)*V(\theta)^{*j}$ holds by \thmref{dim_V_geq.1}(i) and \thmref{jgeq2}. Therefore, we can assume that $|\lambda_1|\geq 1$, without loss of generality assume $\lambda_1(h_1)\geq 1$ 
\begin{theorem} \label{Case III} Given $\lambda_1,\lambda_2 \in P^{+}$ with $\lambda_1(h_1)\geq 1$, set  
$$M_j^+(\lambda_1,\lambda_2,0) = \left\{ \begin{array}{ll}
 M_0(\lambda_1 -\omega_1+\omega_2, \lambda_2 +\omega_1,0),  & \text{ when } j=1,\\  M_{j-2}(\lambda_1,\lambda_2 +\theta,0), & \text{ when } j\geq 2. \end{array}\right.$$ 
Then we have the following  exact sequences of $\mathfrak{sl}_3[t]$-modules, 
$$0\rightarrow \ker \phi_j^+ \rightarrow M_{j}(\lambda_1,\lambda_2,0) \xrightarrow{\phi^{+}_j} M_j^+(\lambda_1,\lambda_2,0) \rightarrow 0,$$
where, $\ker \phi_1^+ \cong_{\mathfrak{sl}_3[t]} \tau_{s_1} M_0(\lambda_1 -\omega_1,\lambda_2+\omega_2,0)$ with $s_1 = \lambda_1(h_1)+\lambda_2(h_1)$, and\\
for $j\geq 2$, $\ker \phi_j^+$ has a filtration $0 \subset W^j_{1} \subset W^j_{2} = \ker\phi_j^+$ such that $$W_1^j \cong 
\tau_{s_{1,j}}M_{j-1}(\lambda_1 -\omega_1,\lambda_2+\omega_2,0), \quad 
\text{ and } \quad \frac{W_2^j}{W_1^j} \cong 
\tau_{s_{2,j}}M_{j-2}(\lambda_1-\omega_1,\lambda_2 +2\omega_1,0),$$ with 
$s_{1,j} = \lambda_1(h_1)+\lambda_2(h_1) +j-1$ and 
$s_{2,j} = \lambda_1(h_2)+\lambda_2(h_2) +j-1$. \end{theorem}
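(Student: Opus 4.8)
The plan is to mimic the proof of \thmref{jgeq2} (the case $\lambda_1\in\{0,\omega_1\}$): produce the epimorphism $\phi_j^+$ by comparing the CV-presentations \eqref{M.rel.1}--\eqref{M.rel.4}, identify the (sub)quotients of $\ker\phi_j^+$ as quotients of smaller $M_\bullet$-modules, and then force these to be isomorphisms by a dimension count anchored to \lemref{dim V geq}. The whole argument will be an induction on $j$ with an inductive hypothesis covering all pairs $(\lambda_1,\lambda_2)$; the base cases are \thmref{Main.thm.8}(i) (which covers $j=0$ for all weights) together with the case $\lambda_1=0$ (already settled in \thmref{jgeq2} and its companion). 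Since $\lambda_1(h_1)\geq1$ ensures $\lambda_1-\omega_1\in P^+$, every module written below is well defined; the remaining case $\lambda_1(h_1)=0$, $\lambda_1(h_2)\geq1$ will follow by applying the order-$2$ Dynkin automorphism of $\mathfrak{sl}_3$, exactly as \corref{dim_V_geq.1} was deduced from \thmref{dim_V_geq.1}.

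First I would check that among the defining relations of $M_j(\lambda_1,\lambda_2,0)$ the only one strictly weaker than the corresponding relation of $M_j^+(\lambda_1,\lambda_2,0)$ is \eqref{M.rel.1} --- for $i=1$ when $j=1$, and for both $i\in I$ when $j\geq2$ --- because $\theta(h_i)=1$, $\theta(h_\theta)=2$, $\omega_2(h_1)=0$ make \eqref{M.rel.2}, \eqref{M.rel.3}, \eqref{M.rel.4} and the generating weight coincide on the two sides. This gives the surjection $\phi_j^+$ with $\ker\phi_1^+=\bu(\mathfrak{sl}_3[t])(y_1\otimes t^{s_1})v_{\Blambda,1}$, $s_1=(\lambda_1+\lambda_2)(h_1)$, and, for $j\geq2$, $\ker\phi_j^+=\bu(\mathfrak{sl}_3[t])(y_1\otimes t^{s_{1,j}})v_{\Blambda,j}+\bu(\mathfrak{sl}_3[t])(y_2\otimes t^{s_{2,j}})v_{\Blambda,j}$ with $s_{i,j}=(\lambda_1+\lambda_2)(h_i)+j-1$, matching the shifts in the statement. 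Setting $W_1^j=\bu(\mathfrak{sl}_3[t])(y_1\otimes t^{s_{1,j}})v_{\Blambda,j}$ and $W_2^j=W_1^j+\bu(\mathfrak{sl}_3[t])(y_2\otimes t^{s_{2,j}})v_{\Blambda,j}$ produces the filtration $0\subset W_1^j\subset W_2^j=\ker\phi_j^+$, and using $[x_{\alpha_2},y_{\alpha_1}]=0$, $[x_\theta,y_{\alpha_1}]\in\mathbb C x_{\alpha_2}$ and $(h\otimes t^r)v_{\Blambda,j}=0$ for $r\geq1$, one sees the relevant generators are highest weight vectors annihilated by $\mathfrak h\otimes t\mathbb C[t]$, of the weights predicted by the claimed isomorphism types (the weight bookkeeping uses $\theta=\omega_1+\omega_2$, $\alpha_1=2\omega_1-\omega_2$).

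The technical heart will then be to verify that these highest weight generators satisfy \emph{all} the CV-relations of $M_0(\lambda_1-\omega_1,\lambda_2+\omega_2,0)$ (for $j=1$), and of $M_{j-1}(\lambda_1-\omega_1,\lambda_2+\omega_2,0)$ resp.\ $M_{j-2}(\lambda_1-\omega_1,\lambda_2+2\omega_1,0)$ (for $j\geq2$), suitably grade-shifted. I would carry this out exactly as in \eqref{U_1.j.1}--\eqref{8.26}: commute the relevant $y_\alpha\otimes t^s$ past the operators $y_\beta(r,\cdot)$, invoke the reformulation \eqref{xrs relation.2}, and substitute the relations \eqref{M.rel.1}--\eqref{M.rel.4} of $M_j(\lambda_1,\lambda_2,0)$ (with $\lambda_1+\lambda_2$ here playing the role of $\lambda_2+\lambda_3$ there). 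This yields surjections $\tau_{s_1}M_0(\lambda_1-\omega_1,\lambda_2+\omega_2,0)\twoheadrightarrow\ker\phi_1^+$, $\tau_{s_{1,j}}M_{j-1}(\lambda_1-\omega_1,\lambda_2+\omega_2,0)\twoheadrightarrow W_1^j$ and $\tau_{s_{2,j}}M_{j-2}(\lambda_1-\omega_1,\lambda_2+2\omega_1,0)\twoheadrightarrow W_2^j/W_1^j$. I expect this step --- specifically the higher-degree relations \eqref{M.rel.3} and \eqref{M.rel.4} --- to be the main obstacle, since one has to juggle the $_k y_\alpha(r,s)$-operators and Garland's identities to see the residual $y_\theta(2,\cdot)$-terms are absorbed; this is precisely where the hypothesis $\lambda_3=0$ (which, as in \remref{rem.trunc}, lets \eqref{M.rel.1} and \eqref{M.rel.3} follow from \eqref{M.rel.2} and \eqref{M.rel.4}) is genuinely used.

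Finally the dimension count closes the argument. By \thmref{Main.thm.8}(i) and the inductive hypothesis, $\dim M_r(\mu,\nu,0)=\dim\bigl(D(2,2\nu)\ast D(1,\mu)\ast V(\theta)^{\ast r}\bigr)$ for $r<j$; combined with the surjections above and additivity of dimension along the kernel filtration, this yields $\dim M_1(\lambda_1,\lambda_2,0)\leq\dim M_0(\lambda_1-\omega_1,\lambda_2+\omega_2,0)+\dim M_0(\lambda_1-\omega_1+\omega_2,\lambda_2+\omega_1,0)$ and, for $j\geq2$, $\dim M_j(\lambda_1,\lambda_2,0)\leq\dim M_{j-1}(\lambda_1-\omega_1,\lambda_2+\omega_2,0)+\dim M_{j-2}(\lambda_1-\omega_1,\lambda_2+2\omega_1,0)+\dim M_{j-2}(\lambda_1,\lambda_2+\theta,0)$. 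Evaluating the right-hand sides with the $\mathfrak{sl}_3$ data $\dim D(1,\omega_i)=3$, $\dim D(2,2\omega_i)=6$, $\dim V(\theta)=8$ and multiplicativity over $\ast$, both bounds come out equal to $\dim\bigl(D(2,2\lambda_2)\ast D(1,\lambda_1)\ast V(\theta)^{\ast j}\bigr)$, which by \lemref{dim V geq} (with $\lambda_3=0$) is simultaneously a lower bound for $\dim M_j(\lambda_1,\lambda_2,0)$. Hence every inequality is an equality and all the surjections above are isomorphisms, giving the short exact sequences and the filtration of the statement --- and, as a byproduct, $M_j(\lambda_1,\lambda_2,0)\cong D(2,2\lambda_2)\ast D(1,\lambda_1)\ast V(\theta)^{\ast j}$, which completes \thmref{Main.thm.8}(ii) in the case $\lambda_3=0$.
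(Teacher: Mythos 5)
Your proposal reproduces the paper's proof essentially step for step: the same surjection $\phi_j^+$ obtained by comparing the CV-presentations of $M_j(\lambda_1,\lambda_2,0)$ and $M_j^+(\lambda_1,\lambda_2,0)$, the same filtration $0\subset W_1^j\subset W_2^j=\ker\phi_j^+$ by the cyclic submodules generated by $y_1\otimes t^{s_{1,j}}$ and $y_2\otimes t^{s_{2,j}}$, the same identification of those generators as highest weight vectors and verification of the relevant CV-relations, and the same dimension count pitting the resulting upper bound against the lower bound from \lemref{dim V geq}, with induction on $j$ anchored in \thmref{Main.thm.8}(i) and the $\lambda_1=0$ case of \thmref{jgeq2}. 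The only small blemish is your parenthetical about why $\lambda_3=0$ is needed: \remref{rem.trunc} concerns $\lambda_2=\lambda_3=0$, and the paper's argument does not use it in this theorem, so that citation is misplaced --- but it is a speculative aside, not a load-bearing step, and does not affect the validity of the argument.
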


\begin{proof} From the definition of $M_1(\lambda_1,\lambda_2,0)$ and $M_{0}(\lambda_1-\omega_1+\omega_2,\lambda_2+\omega_1,0)$, it is clear that there exists 
a surjection $\phi^{+}_1:M_1(\lambda_1,\lambda_2,0) \rightarrow M_{0}(\lambda_1-\omega_1+\omega_2,\lambda_2+\omega_1,0)$ such that 
$$\ker \phi_1^{+} = \bu(\mathfrak{sl}_3[t]).(y_{1} \otimes t^{\lambda_1(h_1)+\lambda_2(h_1)})v_{\Blambda,1} .$$
From the defining relations of $M_1(\lambda_1,\lambda_2)$, it is clear that 
$(y_{1} \otimes t^{\lambda_1(h_1)+\lambda_2(h_1)})v_{\Blambda,1}$ is a highest weight vector of weight $\lambda_1+2\lambda_2+\theta$ and 
$$h\otimes t^r.(y_{1} \otimes t^{\lambda_1(h_1)+\lambda_2(h_1)})v_{\Blambda,1}=0, \forall\, h\in \mathfrak h, \, r\in \mathbb N.$$
Further, as  
$$\begin{array}{ll} y_1(1,\lambda_1(h_1)+\lambda_2(h_1)+1)v_{\Blambda,1} =0,\qquad &y_1(2,\lambda_1(h_1)+2\lambda_2(h_1))v_{\Blambda,1} =0, \\  
y_2(1,\lambda_1(h_2)+\lambda_2(h_2)+1)v_{\Blambda,1}=0, \qquad & y_\theta(1,\lambda_1(h_\theta)+\lambda_2(h_\theta)+1)v_{\Blambda,1}=0, \end{array}$$ it follows that 
\begin{align*}
        (y_1 \otimes t^{\lambda_1(h_1)+\lambda_2(h_1)-1})(y_{1} \otimes t^{\lambda_1(h_1)+\lambda_2(h_1)})v_{\Blambda,1} & =0,\\
        (y_2 \otimes t^{\lambda_1(h_2)+\lambda_2(h_2)+1})(y_{1} \otimes t^{\lambda_1(h_1)+\lambda_2(h_1)})v_{\Blambda,1}& =0.
    \end{align*} Consequently, there exists a surjective map $\phi_1^- : \tau_{s_1}M_0(\lambda_1 - \omega_1,\lambda_2 +\omega_2,0) \longrightarrow \ker \phi^+_1$, implying,  $\dim \ker \phi^+\leq \dim M_0(\lambda_1 - \omega_1,\lambda_2 +\omega_2,0).$ Hence,
\begin{align*}
     \dim M_{1}&(\lambda_1,\lambda_2,0) \\&= \dim \ker\phi^+ + \dim M_0(\lambda_1-\omega_1+\omega_2,\lambda_2+\omega_1,0)\\&
      \leq \dim M_{0}(\lambda_1 - \omega_1,\lambda_2+\omega_2,0) + \dim M_0(\lambda_1-\omega_1+\omega_2, \lambda_2+\omega_1,0)\\& 
      = \dim D(2,2(\lambda_2 + \omega_2))*D(1,\lambda_1 - \omega_1) 
+ \dim D(2,2(\lambda_2 +\omega_1))*D(1,\lambda_1 -\omega_1+\omega_2)\\&
      = 6^{\lambda_2(h_1)+\lambda_2(h_2)+1}3^{\lambda_3(h_1)+\lambda_3(h_2)-1} + 6^{\lambda_2(h_1)+\lambda_2(h_2)+1}3^{\lambda_3(h_1)+\lambda_3(h_2)}\\&
      = 6^{\lambda_2(h_1)+\lambda_2(h_2)}3^{\lambda_3(h_1)+\lambda_3(h_2)}8 = \dim D(2,2\lambda_2)*D(1,\lambda_3)*V(\theta)
 \end{align*}
Coupled with \lemref{dim V geq}, it follows from above that 
\begin{equation} \dim M_{1}(\lambda_1,\lambda_2,0) = \dim D(2,2\lambda_2)*D(1,\lambda_3)*V(\theta),\label{M.1.case3}\end{equation} which implies that the $\mathfrak{sl}_3[t]$-module $\ker\phi_1^+$ is isomorphic to $\tau_{s_1}M_0(\lambda_1-\omega_1,\lambda_2+\omega_2,0)$, with $s_1 = (\lambda_1+\lambda_2)(h_1)$. This proves the theorem for $j=1$.\\ 

\noindent For $j\geq 2$, it is clear from the definitions of $M_j(\lambda_1,\lambda_2,0)$ and $M_{j-2}(\lambda_1,\lambda_2 +\theta,0)$  that there exists a $\mathfrak{sl}_3[t]$-module surjection  $M_j(\lambda_1,\lambda_2,0)\xrightarrow{\phi_j^+} M_{j-2}(\lambda_1,\lambda_2 +\theta,0)$ such that
$$\ker(\phi_j^+) = <y_1 \otimes t^{(\lambda_1+\lambda_2)(h_1)+j-1}.v_{\Blambda,j}, y_2 \otimes t^{(\lambda_1+\lambda_2)(h_2)+j-1})v_{\Blambda,j}>.$$
Setting  \begin{align*} W_1^j = U(\mathfrak{sl}_3[t])(y_1 \otimes t^{(\lambda_1+\lambda_2)(h_1)+j-1})v_{\Blambda,j}, \quad & \quad  W_2^j = U(\mathfrak{sl}_3[t])(y_2 \otimes t^{(\lambda_1+\lambda_2)(h_2)+j-1})v_{\Blambda,2}+ W^j_1 ,\end{align*} it is easy to see that $0\subset W^j_1\subset W^j_2=\ker\phi_j^+$ gives a filtration of $\ker \phi_j^+$. 
 
\noindent From the defining relations of $M_j(\lambda_1,\lambda_2,0)$, it is clear that $(y_1 \otimes t^{(\lambda_1+\lambda_2)(h_1)+j-1})v_{\Blambda,j}$ is a highest weight vector of weight $\lambda_1+2\lambda_2+j\theta-2\omega_1+\omega_2$ that is annihilated by $h\otimes t^r$ for all $h\in \mathfrak h$ and $r\in \mathfrak N$. 
Further, as 
\begin{equation}\begin{array}{ll}y_\alpha(1,(\lambda_1+\lambda_2)(h_\alpha)+j)v_{\Blambda,j}=0, &\forall \, \alpha\in \Phi^+,\\  y_i(2, (\lambda_1+2\lambda_2)(h_i)+j-1)v_{\Blambda,j} =0, \quad &\forall \, i\in I, \end{array}\label{M.j.rel}
\end{equation}
we see that, $(y_1 \otimes t^{\lambda_2(h_1)})(y_1 \otimes t^{(\lambda_1+\lambda_2)(h_1)+j-1})v_{\lambda,j}=0.$ Now using \eqref{M.j.rel}, the condition that $\lambda_1(h_1)\geq 1$ and $j\geq 2$, we get, 
\begin{align*}
         (y_2 \otimes t^{(\lambda_1+\lambda_2)(h_2)+\lambda_1(h_1)+j-1})(y_1 \otimes t^{\lambda_2(h_1)})(y_1 \otimes t^{(\lambda_1+\lambda_2)(h_1)+j-1})v_{\Blambda,j}&=0,\\
     (y_{\theta} \otimes t^{(\lambda_1+\lambda_2)(h_\theta)+j-1})(y_1 \otimes t^{(\lambda_1+\lambda_2)(h_1)+j-1})v_{\Blambda,j} &=0,\\
    (y_1 \otimes t^{(\lambda_1+\lambda_2)(h_1)+j-2})(y_1 \otimes t^{(\lambda_1+\lambda_2)(h_1)+j-1}))v_{\Blambda,j} &=0,\\
    (y_2 \otimes t^{(\lambda_1+\lambda_2)(h_2)+j})(y_1 \otimes t^{(\lambda_1+\lambda_2)(h_1)+j-1})v_{\Blambda,j} &=0.
\end{align*}
Hence there exists a surjective map from $\tau_{s_{1,j}}M_{j-1}(\lambda_1-\omega_1,\lambda_2 +\omega_2,0)$ to $W_1^j$ for $j\geq 2$. \\

\noindent Similar arguments show that
\begin{align*}
     (y_{\theta} \otimes t^{(\lambda_1+\lambda_2)(h_\theta)+j-1})(y_{2}\otimes t^{(\lambda_1+\lambda_2)(h_2)+j-1})v_{\Blambda,j} & =0,\\
    (y_2 \otimes t^{(\lambda_1+\lambda_2)(h_2)+j-2})(y_2 \otimes t^{(\lambda_1+\lambda_2)(h_2)+j-1})v_{\Blambda,j}&=0.
\end{align*} Now, using the condition that $j\geq 2$ and \eqref{M.j.rel} we get, $$(y_1 \otimes t^{(\lambda_1+\lambda_2)(h_1)+j-1})(y_2 \otimes t^{(\lambda_1+\lambda_2)(h_2)+j-1})v_{\Blambda} \in W_{1}^j,$$ which implies that $W_2^j/W_1^j$ is a quotient 
of $M_{j-2}(\lambda_1-\omega_1,\lambda_2+2\omega_1,0)$. 
Hence, 
\begin{align*}
    &\dim M_j(\lambda_1,\lambda_2,0) = \dim \ker\phi_j^+ + \dim M_{j-2}(\lambda_1,\lambda_2+\theta,0) \\
    & \leq \dim M_{j-1}(\lambda_1-\omega_2, \lambda_2 +\omega_2,0) + \dim M_{j-2}(\lambda_1-\omega_1, \lambda_2 +2\omega_1,0) + \dim M_{j-2}(\lambda_1,\lambda_2+\theta,0).\end{align*} 
Substituting the values of
$\dim M_{1}(\lambda_1 -\omega_1,\lambda_2 +\omega_2,0)$ , $\dim M_{0}(\lambda_1-\omega_1,\lambda_2 +2\omega_1,0)$ and $\dim M_{0}(\lambda_1,\lambda_2 +\theta,0)$ 
from \eqref{M.1.case3} and \eqref{M_0.dim},  we see that 
$\dim M_2(\lambda_1,\lambda_2,0)= \dim D(2, 2\lambda_2)\ast D(1,\lambda_1)\ast V(\theta)^{\ast 2}$. This implies 
that 
$$W_2^2/W_1^2\cong_{\mathfrak{sl}_3[t]} \tau_{s_{2,2}}M_{0}(\lambda_1-\omega_1,\lambda_2+2\omega_1,0), \qquad W_1^2 \cong_{\mathfrak{sl}_3[t]} \tau_{s_{1,2}}M_{1}(\lambda_1-\omega_1,\lambda_2 +\omega_2,0). $$ Thus the theorem holds for $j=2$. Now assuming 
that the theorem holds for all positive integers $2\leq p<j$. Then using induction hypothesis it follows from the above computations that
\begin{align*}
&\dim M_j(\lambda_1,\lambda_2,0)\\
    & \leq  \dim(D(2,(\lambda_2 +2\omega_2))*D(1,\lambda_3-\omega_1)*V(\theta)^{*j-1})\\
    &+\dim(D(2,2(\lambda_2 +2\omega_1))*D(1,\lambda_3-\omega_1)*V(\theta)^{*j-2})+ \dim (D(2,2(\lambda_2 +\theta))*D(1,\lambda_3)*V(\theta)^{*j-2})\\
    & = 6^{\lambda_2(h_{\theta})+1}3^{\lambda_1(h_{\theta})-1}8^{j-1}+6^{\lambda_2(h_{\theta})+2}3^{\lambda_1(h_{\theta})-1}8^{j-2}+6^{\lambda_2(h_{\theta})+2}3^{\lambda_1(h_{\theta})}8^{j-2}\\
    & = 6^{\lambda_2(h_{\theta})}3^{\lambda_1(h_{\theta})}8^{j} = \dim (D(2,2\lambda_2)*D(1,\lambda_3)*V(\theta)^{*j}).
\end{align*}
Thus using \lemref{dim V geq} and the same arguments as used for $j=1$ case,  
we see that the theorem holds for all $j\in \mathbb N$. \end{proof}

\subsection{Proof of \thmref{Main.thm.8}(ii)} 

First consider the case when $j=1$. 
As noted in \secref{Mjmod}, $M_1(0,\lambda_2,0)$ and $M_1(0,0,\lambda_3)$  are isomorphic to the Demazure modules $D(2,2\lambda_2+\theta)$ and $D(3,3\lambda_3+\theta)$ respectively. Since $\theta(h_\theta)=2$ and $\theta(h_i)=1$, using the prime factorization of Demazure modules of $\mathfrak{g}[t]$ for $\mathfrak g$ of type $A$ and $D$ \cite[Proposition 3.9]{MR3478864}, we see that
$D(\ell,\ell\lambda_\ell +\theta)\cong_{\mathfrak{sl}_3[t]}D(\ell, \ell\lambda_\ell)\ast V(\theta)$ for $\ell\geq 2.$ Hence the result holds for  
$M_1(0,\lambda_2,0)$ and $M_1(0,0,\lambda_3)$. 

By definition, $M_1(\lambda_1,0,0)$ is isomorphic to 
$M(\lambda_1,\theta)$ as a $\mathfrak{sl}_3[t]$-module. Therefore by \propref{M.mu.nu}(i), $M_1(\lambda_1,0,0)\cong_{\mathfrak{sl}_3[t]}D(1, \lambda_1)\ast V(\theta)$

\noindent Now note that when at least two of the dominant integral weights $\lambda_1, \lambda_2,\lambda_3$ are non-zero,
$$M_1(\lambda_1,\lambda_2,\lambda_3)\cong_{\mathfrak{sl}_3[t]} D(3, 3\lambda_3)\ast D(2, 2\lambda_2)\ast D(1, \lambda_1)\ast V(\theta),$$ follows
from \thmref{dim_V_geq.1} and \corref{dim_V_geq.1} when $|\lambda_1|\leq 1$ and 
from \thmref{Case III} when $\lambda_3=0$. This proves \thmref{Main.thm.8}(ii) for $j=1$.

For $j\geq 2$, we see that \thmref{Main.thm.8}(ii) clearly follows from 
\lemref{dim V geq} and \thmref{Case II} when $|\lambda_1|\leq 1$ and from \lemref{dim V geq} and \thmref{Case III} when $\lambda_3=0$. \qed

\subsection{ Proof of \thmref{trunc}(i)}  As observed in \remref{rem.trunc}, 
the proof of \thmref{trunc}(i) follows from \thmref{Main.thm.8} for $\lambda_2=0$ and $\lambda_1=\lambda-j\theta$, $0\leq j\leq L_\lambda$  .\qed
\\

{\it{Acknowledgements.}} Part of this work was completed while the first two authors attended the workshop, "Representation Theory of Real Lie Groups and Automorphic Forms" at the Harish-Chandra Research Institute (HRI). The authors are grateful to the HRI for providing an excellent working environment.
\\

{\it{Funding.}} The first author is grateful to the Indian Institute of Science Education and Research Mohali for PhD fellowship and support.


 
\end{document}